\numberwithin{equation}{section}
\newcommand{\g}{\mathfrak{g}}
\numberwithin{equation}{section}
\newtheorem{thm}{Theorem}[section]
 \newtheorem{cor}[thm]{Corollary}
 \newtheorem{lem}[thm]{Lemma}
 \newtheorem{prop}[thm]{Proposition}
 \newtheorem{defn}[thm]{Definition}
\newtheorem{rem}[thm]{Remark}
\begin{document}
\title[Compatible anti-pre-Lie algebras]{Compatible anti-pre-Lie algebras}
		
\author{Normatov Z.}

\address[Zafar Normatov]{School of Mathematics, Jilin University, Changchun, 130012, China, \newline
Institute of Mathematics, Uzbekistan Academy of Sciences, Univesity Street, 9, Olmazor district, Tashkent, 100174, Uzbekistan}
\email{z.normatov@inbox.ru, \ z.normatov@mathinst.uz}

\begin{abstract} In this paper, we introduce the  notion of compatible anti-pre-Lie algebras and study relationship between them and the related structures such as anti-$\mathcal{O}$-operators, commutative $2$-cocycles on compatible Lie algebras. Moreover, we give the classification of 2-dimensional compatible anti-pre-Lie algebras from the classification of anti-pre-Lie algebras of the same dimension.
\end{abstract}

\subjclass[2020]{16P10, 17A30. }
\keywords{associative algebra; pre-Lie algebra, anti-pre-Lie algebra.}
	
	\maketitle

\section{Introduction}
\

The notion of a pre-Lie algebra was introduced independently by Gerstenhaber \cite{G}, Koszul \cite{K} and Vinberg \cite{V} in the 1960s. Pre-Lie algebras arose from the study of affine manifolds
and affine structures on Lie group, homogeneous convex cones. Pre-Lie algebras have a close relationship with Lie algebras:
a pre-Lie algebra $(A, \ast)$ gives rise to a Lie algebra $(A, [-, -])$ via the commutator bracket, which
is called the subadjacent Lie algebra and denoted by $(\g(A), [-,-])$.

An anti-pre-Lie algebra is a type of algebraic structure closely related to pre-Lie algebras, but with a reversed associativity condition and one extra relation. The notion of this type algebra is introduced by G. Liu G. and  C. Bai in \cite{LB}. In a pre-Lie algebra, the associator satisfies a specific symmetry when two elements are swapped, whereas in an anti-pre-Lie algebra, this symmetry condition is reversed. Similar to pre-Lie algebras, the commutator in an anti-pre-Lie algebra forms a Lie algebra.


Compatible algebraic structures are quite much interest in the sense that linear combinations of multiplications defined on the same two algebraic structures are still of the same kind of algebraic structures. Compatible algebraic structures have
been widely studied in mathematics and mathematical physics. For example, compatible associative algebras were studied in connection with Cartan matrices of affine Dynkin diagrams,
integrable matrix equations, infinitesimal bialgebras and
quiver representations \cite{OS2}--\cite{OS4}. 
Compatible Lie algebras were studied in \cite{OS1} and \cite{GS1}--\cite{GS2} in the contexts of the classical Yang–Baxter equation and principal chiral field, loop algebras over Lie algebras and elliptic theta functions.

The notion of a compatible pre-Lie algebra was introduced by  Wu and Bai in \cite{WB} and studied its properties. Compatible pre-Lie
algebras have a close relation with the classical Yang–Baxter equation in compatible
Lie algebras and as a byproduct, the compatible Lie bialgebras ( introduced in \cite{WB}) fit into the framework to construct non-constant solutions of the classical Yang–Baxter equation given by Golubchik and Sokolov \cite{GS2}.

The classification (up to isomorphism) of algebras of low dimensions from a
certain variety defined by a family of polynomial identities is a classic problem in the theory
of non-associative algebras. There are many results related to the algebraic classification of low-dimensional algebras in many varieties of
associative and non-associative algebras.
For example, the classifications of 2-dimensional algebras in various types are given in \cite{AKM}, \cite{Bai2}, \cite{LB}, \cite{petersson}, \cite{RRB}   and  those of $3$-dimensional algebras are given in   \cite{AANS}, \cite{BOK}, \cite{ccsmv}, \cite{Kobayashi}, \cite{RIB}.

The paper is structured as follows: Firtsly, we introduce the notion of compatible anti-pre-Lie algebras and prove that they are compatible Lie admissible algebras whose negative multiplication operators make representations of commutator compatible Lie algebras in Section 2. In Section 3, we study  relations between anti-$\mathcal{O}$-operators and compatible anti-pre-Lie algebras. In Section 4, we show a close relationship between compatible anti-pre-Lie algebras and commutative $2$-cocycles on compatible Lie algebras which is the former can be induced from the latter in the nondegenerate case. In Section 5, we give the classification of complex 2-dimensional compatible anti-pre-Lie algebras with obtaining 45 non-isomorphic classes of such algebras.

Throughout the paper, all vector spaces and algebras are finite-dimensional and over complex number field
$\mathbb{C}$ unless otherwise stated.

\section{Preliminaries and Notions}

There is an ``anti-structure" for pre-Lie algebras, namely anti-pre-Lie algebras, which is charecterized as  Lie-admissible algebras whose negative left multiplication operators give representations of the commutator Lie algebras.  By the motivation of anti-pre-Lie algebras, in this section, we introduce the notion of compatible anti-pre-Lie algebras as a class of compatible Lie-admissible algebras whose negative left multiplication operators make representations of the commutator compatible Lie algebras.

Recall that  $(A, \ast)$ is called a \textbf{Lie-admissible algebra}, where $A$ is a vector space with a bilinear operation
$\ast : A\otimes A\rightarrow A$, if the commutator operation $[-,-]: A\otimes A\rightarrow A$ makes $(A, [-,-])$ a Lie algebra, where $[x,y]=x\ast y-y\ast x$. In this case, $(A, [-,-])$ is called the \textbf{sub-adjacent Lie algebra} of $(A, \ast)$ denoted by $\g(A)$. 

\begin{defn}
Let $A$ be a vector space with a bilinear operation $\ast$. $(A, \ast)$ is called a pre-Lie algebra if it satisfies the identity
\begin{equation*}
(x\ast y)\ast z - x\ast(y\ast z)=(y\ast x)\ast z - y\ast(x\ast z), \quad \forall x,y,z\in A.
\end{equation*}
\end{defn}

\begin{defn}
Let $A$ be a vector space with a bilinear operation $\ast$. $(A, \ast)$ is called an anti-pre-Lie algebra if it satisfies the identities
\begin{align}\label{anti id1}
&x\ast (y\ast z)-y\ast(x\ast z)=[y,x]\ast z,\\\label{anti id2}
&[x,y]\ast z+[y,z]\ast x+[z,x]\ast y=0, \quad \forall x,y,z\in A,
\end{align}
where the bilinear operation $[-,-]$ is a commutator i.e. $[x,y]=x\ast y-y\ast x$.
\end{defn}

\begin{rem}
Define $\mathcal{L}_{\ast}$ as a left multiplication operator. A pre-Lie algebra $(A, \ast)$ is a Lie-admissible algebra such that $(\mathcal{L}_{\ast}, A)$ is a representation of the sub-adjacent Lie algebra $\g(A)$ while an anti-pre-Lie algebra $(A, \ast)$ is a Lie-admissible algebra such that $(-\mathcal{L}_{\ast}, A)$ is a representation of the sub-adjacent Lie algebra $\g(A)$.
\end{rem}

\begin{defn}
Let $(\g, [-,-]_{1})$ and $(\g, [-,-]_{2})$ be two
Lie algebras. They are called compatible if
for any $k_1, k_2 \in \mathbb{C}$, the following bilinear operation
\begin{equation*}
[x, y] = k_1[x, y]_{1} + k_2[x, y]_{2}, \quad \forall x, y \in \g,    
\end{equation*}
defines a Lie algebra structure on $\g$.
We denote 
two compatible Lie algebras by $(\g, [-,-]_{1}, [-,-]_{2})$ and call
$(\g, [-,-]_{1}, [-,-]_{2})$ a compatible Lie algebra.
\end{defn}

\begin{prop}\cite{WB}\label{compatible lie}
Let $(\g, [-,-]_{1})$ and $(\g, [-,-]_{2})$ be two
Lie algebras. Then $(\g, [-,-]_{1}, [-,-]_{2})$ is a compatible Lie algebra
if and only if for any $x, y, z \in \g$, the following equation holds
\begin{equation}\label{compatible lie algebra}
[[x,y]_{1},z]_{2}+[[y,z]_{1},x]_{2}+[[z,x]_{1},y]_{2}+[[x,y]_{2},z]_{1}+[[y,z]_{2},x]_{1}+[[z,x]_{2},y]_{1}=0.
\end{equation}
\end{prop}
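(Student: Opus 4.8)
The plan is to reduce everything to the Jacobi identity. The combined bracket $[x,y] = k_1[x,y]_{1} + k_2[x,y]_{2}$ is manifestly bilinear and, since both $[-,-]_{1}$ and $[-,-]_{2}$ are antisymmetric, it is antisymmetric as well. Hence $(\g, [-,-])$ is a Lie algebra for a given pair $(k_1,k_2)$ if and only if $[-,-]$ satisfies the Jacobi identity, and the whole statement becomes a claim about when this holds simultaneously for all $k_1, k_2 \in \mathbb{C}$.

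First I would fix $x,y,z \in \g$ and expand the double bracket $[[x,y],z]$ by applying the definition of $[-,-]$ twice. This yields four terms, one for each way of choosing the index ($1$ or $2$) of the inner and of the outer bracket, with coefficients $k_1^2$, $k_1k_2$, $k_1k_2$, $k_2^2$ respectively. Doing the same for the two cyclic rotations $(y,z,x)$ and $(z,x,y)$ and adding, the Jacobiator of $[-,-]$ organizes itself as a homogeneous quadratic polynomial in $k_1, k_2$: the $k_1^2$-coefficient is the Jacobiator of $[-,-]_{1}$, the $k_2^2$-coefficient is the Jacobiator of $[-,-]_{2}$, and the $k_1 k_2$-coefficient is exactly the left-hand side of \eqref{compatible lie algebra}.

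The key input is then that the first two coefficients vanish identically, because $(\g, [-,-]_{1})$ and $(\g,[-,-]_{2})$ are assumed to be Lie algebras. Writing $C(x,y,z)$ for the expression in \eqref{compatible lie algebra}, this collapses the Jacobiator of $[-,-]$ to $k_1 k_2\, C(x,y,z)$. The equivalence now follows in both directions: if \eqref{compatible lie algebra} holds, then this quantity is zero for every $(k_1,k_2)$, so each combined bracket is a Lie bracket and the two algebras are compatible; conversely, compatibility forces the Jacobiator to vanish for all $(k_1,k_2)$, and specializing to $k_1 = k_2 = 1$ gives $C(x,y,z) = 0$.

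The computation carries no real conceptual difficulty; the only point requiring care is the bookkeeping in the second step, namely verifying that the six $k_1k_2$-terms produced by the three cyclic rotations match, term for term, the six summands of \eqref{compatible lie algebra}, each rotation contributing a mixed pair of the form $[[\,\cdot\,,\cdot\,]_{1},\cdot\,]_{2} + [[\,\cdot\,,\cdot\,]_{2},\cdot\,]_{1}$. Thus the main obstacle is organizational rather than substantive.
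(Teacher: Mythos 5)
Your proof is correct and is essentially the paper's approach: the paper states this proposition without proof, citing \cite{WB}, and the argument there is exactly your computation — expand the Jacobiator of $k_1[-,-]_{1}+k_2[-,-]_{2}$ as a quadratic polynomial in $(k_1,k_2)$, observe that the $k_1^2$- and $k_2^2$-coefficients vanish by the Jacobi identities of the two given brackets, and identify the $k_1k_2$-coefficient with the left-hand side of \eqref{compatible lie algebra}. Your handling of the converse by specializing to $k_1=k_2=1$ is also the standard step and is valid.
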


\begin{defn}
A representation of a compatible Lie algebra $(\g, [-,-]_{1}, [-,-]_{2})$ on a vector space $V$ is a pair of linear maps $\rho, \mu : \g \rightarrow gl(V)$ such that for any $k_1, k_2 \in \mathbb{C}$,
$k_1\rho+k_2\mu$ is a representation of the Lie algebra $(\g, k_1[-,-]_{1}+k_2[-,-]_{2})$. We denote it by $(\rho, \mu, V)$.
\end{defn}

\begin{prop}\cite{WB}\label{representation}
Let $(\g, [-,-]_{1}, [-,-]_{2})$ be a compatible Lie
algebra. Let $V$ be a vector space and $\rho, \mu : \g \rightarrow gl(V)$ be
a pair of linear maps. Then the following conditions are
equivalent:
\begin{itemize}
    \item[(i)] $(\rho, \mu, V)$ is a representation of $(\g, [-,-]_{1}, [-,-]_{2})$ on the
vector space $V$.
    \item[(ii)] For any $x, y \in \g$, the following three equations hold
\begin{align}\label{rho}
&1. \quad \rho([x, y]_{1}) = \rho(x)\rho(y) - \rho(y)\rho(x),\\\label{mu}
&2. \quad \mu([x, y]_{2}) = \mu(x)\mu(y) - \mu(y)\mu(x),\\\label{rho-mu}
&3. \quad \rho([x, y]_{2}) + \mu([x, y]_{1}) = \rho(x)\mu(y)
- \rho(y)\mu(x) + \mu(x)\rho(y) - \mu(y)\rho(x).
\end{align}
\end{itemize}
\end{prop}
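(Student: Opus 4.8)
The plan is to unwind the definition of a representation of a compatible Lie algebra into a single identity depending polynomially on the scalars $k_1,k_2$, and then read off the three equations by comparing coefficients. Write $\rho_{k}=k_1\rho+k_2\mu$ and $[-,-]_{k}=k_1[-,-]_{1}+k_2[-,-]_{2}$. By definition, condition (i) asserts that for every $k_1,k_2\in\mathbb{C}$ the map $\rho_{k}$ is a representation of the Lie algebra $(\g,[-,-]_{k})$, that is,
\begin{equation*}
\rho_{k}([x,y]_{k})=\rho_{k}(x)\rho_{k}(y)-\rho_{k}(y)\rho_{k}(x),\qquad \forall x,y\in\g.
\end{equation*}
The compatibility hypothesis already guarantees that each $[-,-]_{k}$ is a Lie bracket, so this single operator identity is the only constraint to analyze.

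First I would expand both sides as $gl(V)$-valued polynomials in $k_1,k_2$. Since $\rho_{k}$ and $[-,-]_{k}$ are each linear in $(k_1,k_2)$, both sides are homogeneous of degree two, and a direct computation gives
\begin{align*}
\rho_{k}([x,y]_{k})&=k_1^2\,\rho([x,y]_{1})+k_1k_2\bigl(\rho([x,y]_{2})+\mu([x,y]_{1})\bigr)+k_2^2\,\mu([x,y]_{2}),\\
\rho_{k}(x)\rho_{k}(y)-\rho_{k}(y)\rho_{k}(x)&=k_1^2\bigl(\rho(x)\rho(y)-\rho(y)\rho(x)\bigr)+k_2^2\bigl(\mu(x)\mu(y)-\mu(y)\mu(x)\bigr)\\
&\quad+k_1k_2\bigl(\rho(x)\mu(y)-\rho(y)\mu(x)+\mu(x)\rho(y)-\mu(y)\rho(x)\bigr).
\end{align*}
Subtracting, the representation identity becomes the assertion that a fixed quadratic form $A\,k_1^2+B\,k_1k_2+C\,k_2^2$ in $(k_1,k_2)$, with coefficients $A,B,C\in gl(V)$ depending on $x,y$, vanishes for all $k_1,k_2\in\mathbb{C}$.

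For the implication (i) $\Rightarrow$ (ii) I would use that $\mathbb{C}$ is infinite: a polynomial in $(k_1,k_2)$ vanishing identically forces each coefficient to vanish, so $A=B=C=0$; concretely one may specialize $(k_1,k_2)$ to $(1,0)$, $(0,1)$ and $(1,1)$. The vanishing of $A$ is exactly \eqref{rho}, that of $C$ is \eqref{mu}, and that of $B$ is \eqref{rho-mu}. For the converse (ii) $\Rightarrow$ (i) I would substitute the three equations back into the displayed expansions, whereupon $A=B=C=0$ and the identity holds for every choice of $k_1,k_2$, which is precisely condition (i).

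There is no serious obstacle here; the content is entirely the bookkeeping in the degree-two expansion together with the elementary fact that an identity in $k_1,k_2$ over the infinite field $\mathbb{C}$ is equivalent to the vanishing of its three coefficients. The only point deserving care is keeping the mixed $k_1k_2$ term straight, since it collects both the two cross terms $\rho([x,y]_{2})+\mu([x,y]_{1})$ on the left and the four cross terms on the right, and this matching is exactly what produces the asymmetric-looking compatibility equation \eqref{rho-mu}.
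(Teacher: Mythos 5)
Your proof is correct. The paper states this proposition without proof, citing Wu and Bai \cite{WB}, and the argument is exactly the one you give: expand $\rho_k([x,y]_k)=[\rho_k(x),\rho_k(y)]$ as a homogeneous quadratic in $(k_1,k_2)$ and compare the $k_1^2$, $k_1k_2$, $k_2^2$ coefficients (equivalently, specialize at $(1,0)$, $(0,1)$, $(1,1)$) — the same bookkeeping the paper itself carries out in the proof of Proposition~\ref{interchange} — including your correct observation that compatibility already guarantees each $k_1[-,-]_{1}+k_2[-,-]_{2}$ is a Lie bracket, so the bracket identity needs no separate verification.
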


\begin{defn}
Let $(A, \circ)$ and $(A, \ast)$ be two anti-pre-Lie algebras. They are called \textbf{\textit{compatible}} if for any $k_1, k_2 \in \mathbb{C}$, the following bilinear operation
\begin{equation}\label{compatible id}
x \star y = k_1x \circ y + k_2x \ast y , \quad \forall x, y \in A,
\end{equation}
defines an anti-pre-Lie algebra structure on $A$.
We denote
the two compatible anti-pre-Lie algebras by $(A, \circ, \ast)$ and call
$(A, \circ, \ast)$ a \textbf{\textit{compatible anti-pre-Lie algebra}}.
\end{defn}

\begin{prop}\label{interchange}
Let $(A, \circ)$ and $(A, \ast)$ be two anti-pre-Lie algebras. Then $(A, \circ, \ast)$ is a compatible anti-pre-Lie algebra if and only if for any $x,y,z\in A$, the following equalities hold:
\begin{align}
\label{compatible anti-pre id1}
&1. \quad x\circ(y\ast z)+x\ast (y\circ z)-y\circ(x\ast z)-y\ast(x\circ z)=[y,x]_2\circ z+[y,x]_1\ast z,\\ \label{compatible anti-pre id2}
&2. \quad [x,y]_2\circ z+[x,y]_1\ast z+[y,z]_2\circ x+[y,z]_1\ast x+[z,x]_2\circ y+[z,x]_1\ast y=0,
\end{align}
where 
\begin{equation}\label{commutators}
[x,y]_1=x\circ y-y\circ x \quad  \text{and} \quad  [x,y]_2=x\ast y-y\ast x. 
\end{equation}
\end{prop}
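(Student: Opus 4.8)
The plan is to work directly from the definition of a compatible anti-pre-Lie algebra. Set $\star = k_1\circ + k_2\ast$ for arbitrary scalars $k_1,k_2\in\mathbb{C}$, as in \eqref{compatible id}, and observe first that by bilinearity its commutator splits cleanly: $x\star y-y\star x = k_1(x\circ y-y\circ x)+k_2(x\ast y-y\ast z)$, i.e. the $\star$-commutator is $k_1[x,y]_1+k_2[x,y]_2$ with $[-,-]_1$ and $[-,-]_2$ as in \eqref{commutators}. The whole proposition then reduces to substituting $\star$ into the two defining identities \eqref{anti id1} and \eqref{anti id2} of an anti-pre-Lie algebra, expanding everything by bilinearity, and tracking the coefficients of the monomials $k_1^2$, $k_2^2$, and $k_1k_2$.

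First I would substitute $\star$ into \eqref{anti id1}. Expanding $x\star(y\star z)-y\star(x\star z)$ and the right-hand side $(y\star x-x\star y)\star z$ produces a homogeneous quadratic expression in $(k_1,k_2)$. The coefficient of $k_1^2$ is precisely \eqref{anti id1} written for $\circ$, the coefficient of $k_2^2$ is \eqref{anti id1} written for $\ast$, and the coefficient of $k_1k_2$ is exactly the combination appearing in \eqref{compatible anti-pre id1} (here one must expand the mixed right-hand term into $[y,x]_1\ast z+[y,x]_2\circ z$ before comparing). Since $(A,\circ)$ and $(A,\ast)$ are anti-pre-Lie algebras, the first two coefficients vanish identically, so the full expression vanishes for all $k_1,k_2$ if and only if the $k_1k_2$-coefficient vanishes, i.e. if and only if \eqref{compatible anti-pre id1} holds.

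The same computation would then be carried out for \eqref{anti id2}: substituting $\star$ into the cyclic sum, the $k_1^2$- and $k_2^2$-coefficients reproduce \eqref{anti id2} for $\circ$ and for $\ast$ respectively (and hence vanish), while the $k_1k_2$-coefficient is exactly the left-hand side of \eqref{compatible anti-pre id2}. Thus the second anti-pre-Lie identity holds for $\star$ for all $k_1,k_2$ precisely when \eqref{compatible anti-pre id2} holds. To close both equivalences I would invoke the elementary fact that a homogeneous quadratic form $a\,k_1^2+b\,k_2^2+c\,k_1k_2$ vanishes for all $(k_1,k_2)\in\mathbb{C}^2$ if and only if $a=b=c=0$; in our situation $a=b=0$ come for free from the hypothesis, so requiring $\star$ to be anti-pre-Lie for all scalars is equivalent to the pair of identities \eqref{compatible anti-pre id1} and \eqref{compatible anti-pre id2}.

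There is no conceptual obstacle here; the only real care needed is bookkeeping. The step most prone to error is collecting the cross terms in the expansion of \eqref{anti id1}, since one must expand the $\star$-commutator on the right-hand side \emph{before} extracting the $k_1k_2$-coefficient, and the indices $1,2$ attached to the commutators (corresponding to $\circ$ and $\ast$ via \eqref{commutators}) must be matched against the correct operation. I would therefore organize the proof so that each of the two identities is expanded once, with the $k_1^2$, $k_2^2$, $k_1k_2$ coefficients displayed separately, making the vanishing of the first two and the identification of the third with \eqref{compatible anti-pre id1} and \eqref{compatible anti-pre id2} immediate.
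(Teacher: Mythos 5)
Your proposal is correct and follows essentially the same route as the paper's proof: substitute $\star = k_1\circ + k_2\ast$ into the two anti-pre-Lie identities, expand by bilinearity, note that the $k_1^2$- and $k_2^2$-coefficients vanish because $(A,\circ)$ and $(A,\ast)$ are anti-pre-Lie, and identify the $k_1k_2$-coefficients with \eqref{compatible anti-pre id1} and \eqref{compatible anti-pre id2}, your explicit quadratic-form argument merely making precise the paper's brief ``by equalizing'' step. The only blemish is a typo in your commutator expansion, where $k_2(x\ast y - y\ast z)$ should read $k_2(x\ast y - y\ast x)$.
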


\begin{proof}
Let $x,y,z \in A$. By \eqref{compatible id} and \eqref{anti id1} in the definition of anti-pre-Lie algebras, we have 
\[
x\star(y\star z)-y\star(x\star z)=(x\star y-y\star x)\star z.
\]
Then, we obtain
\begin{align*}\nonumber
LHS=&k_1^2(x\circ(y\circ z)-y\circ(x\circ z))+k_2^2(x\ast(y\ast z)-y\ast(x\ast z))\\
&+k_1k_2(x\circ(y\ast z)+x\ast(y\circ z)-y\circ(x\ast z)-y\ast(x\circ z)),\\\nonumber
RHS=&k_1^2((y\circ x)\circ z-(x\circ y)\circ z)+k_2^2((y\ast x)\ast z-(x\ast y)\ast z)\\
&+k_1k_2((y\ast x)\circ z+(y\circ x)\ast z-(x\ast y)\circ z-(x\circ y)\ast z).
\end{align*}
By equalizing we get \eqref{compatible anti-pre id1}. Similarly, by considering \eqref{anti id2} one can take \eqref{compatible anti-pre id2}.

Let \eqref{compatible anti-pre id1} and \eqref{compatible anti-pre id2} hold. Then
repeating the same calculation in the other direction one can show that $(A,\circ, \ast)$ is a compatible anti-pre-Lie algebra.
\end{proof}

\begin{rem}
Let $(A,\circ, \ast)$ be a compatible anti-pre-Lie algebra and  $x,y,z\in A$. 
Then we can write
\begin{equation}\label{left to right}
\begin{split}
&x\circ [y,z]_2+x\ast [y,z]_1+y\circ [z,x]_2+y\ast [z,x]_1+z\circ [x,y]_2+z\ast [x,y]_1\\
\stackrel{\eqref{commutators}}{=}&x\circ (y\ast z)-x\circ (z\ast y)+x\ast (y\circ z)-x\ast (z\circ y)+y\circ (z\ast x)-y\circ (x\ast z)\\
&+y\ast (z\circ x)-y\ast (x\circ z)+z\circ (x\ast y)-z\circ (y\ast x)+z\ast (x\circ y)-z\ast (y\circ x)\\
\stackrel{\eqref{commutators}}{=}&[y,x]_2\circ z+[y,x]_1\ast z+[x,z]_2\circ y+[x,z]_1\ast y+[z,y]_2\circ x+[z,y]_1\ast x.
\end{split}
\end{equation}
Thus, \eqref{compatible anti-pre id2} holds if and only if following equality holds
\begin{equation}\label{2-half jacobian}
x\circ [y,z]_2+x\ast [y,z]_1+y\circ [z,x]_2+y\ast [z,x]_1+z\circ [x,y]_2+z\ast [x,y]_1=0.
\end{equation}
Thus we can interchange \eqref{compatible anti-pre id2} with \eqref{2-half jacobian} in Proposition \ref{interchange}.
\end{rem}
Let $(A,\circ)$ be an anti-pre-Lie algebra and $L_{\circ}:A\rightarrow gl(A)$ be a linear map defined by $L_{\circ}(x)y=x\circ y$, for any $x,y\in A$.

\begin{prop}
Let $(A, \circ, \ast)$ be a compatible anti-pre-Lie algebra. Then  $(-L_{\circ}, -L_{\ast}, A)$ is a representation of the sub-adjacent  compatible Lie algebra $\g(A)$.
\end{prop}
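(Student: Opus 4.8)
The plan is to apply the characterization of representations given in Proposition \ref{representation}. Setting $\rho = -L_\circ$ and $\mu = -L_\ast$, it suffices to verify the three identities \eqref{rho}, \eqref{mu} and \eqref{rho-mu} for this pair of maps. By hypothesis both $(A,\circ)$ and $(A,\ast)$ are anti-pre-Lie algebras, so each individually satisfies \eqref{anti id1}, and together they satisfy the compatibility conditions of Proposition \ref{interchange}; these are exactly the three ingredients I will consume.

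For \eqref{rho} I would rewrite \eqref{anti id1} for the product $\circ$: evaluating on the definition $L_\circ(x)y = x\circ y$ and using the antisymmetry of $[-,-]_1$ gives $x\circ(y\circ z)-y\circ(x\circ z) = [y,x]_1\circ z = -L_\circ([x,y]_1)z$, which is precisely $-L_\circ([x,y]_1) = L_\circ(x)L_\circ(y)-L_\circ(y)L_\circ(x)$, i.e. \eqref{rho} with $\rho = -L_\circ$. The verification of \eqref{mu} is identical, with $\ast$ in place of $\circ$ and $[-,-]_2$ in place of $[-,-]_1$. In effect these two equations are just the single-algebra fact recorded in the Remark after the definition of anti-pre-Lie algebras, applied to each component separately.

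The only genuinely new relation is the mixed one \eqref{rho-mu}. Substituting $\rho = -L_\circ$ and $\mu = -L_\ast$, every product of two operators on the right-hand side carries a factor $(-1)(-1)=1$, so evaluating on an arbitrary $z\in A$ turns \eqref{rho-mu} into
\[
x\circ(y\ast z)+x\ast(y\circ z)-y\circ(x\ast z)-y\ast(x\circ z) = [y,x]_2\circ z+[y,x]_1\ast z,
\]
where on the left-hand side I have used $-[x,y]_i = [y,x]_i$ to rewrite the two image terms. This is exactly identity \eqref{compatible anti-pre id1} of Proposition \ref{interchange}, which holds because $(A,\circ,\ast)$ is a compatible anti-pre-Lie algebra. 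Hence all three conditions of Proposition \ref{representation} are satisfied and $(-L_\circ, -L_\ast, A)$ is a representation of $\g(A)$.

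The proof presents no real obstacle: the only step requiring care is matching \eqref{rho-mu} to \eqref{compatible anti-pre id1}, where one must track the double sign cancellation coming from $\rho=-L_\circ$, $\mu=-L_\ast$ and reorder the four terms on the left. Since the resulting identity is precisely the defining compatibility condition already established in Proposition \ref{interchange}, the representation property follows immediately.
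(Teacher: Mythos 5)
Your proposal is correct and follows essentially the same route as the paper: conditions \eqref{rho} and \eqref{mu} come from the anti-pre-Lie identity \eqref{anti id1} applied to $(A,\circ)$ and $(A,\ast)$ separately, while the mixed condition \eqref{rho-mu}, after the sign cancellation from $\rho=-L_{\circ}$, $\mu=-L_{\ast}$, is exactly the compatibility identity \eqref{compatible anti-pre id1}, so Proposition \ref{representation} applies. Your sign bookkeeping is accurate, and the identification of \eqref{rho-mu} with \eqref{compatible anti-pre id1} matches the paper's computation verbatim.
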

\begin{proof}
Since $(A, \circ)$ and $(A, \ast)$ are anti-pre-Lie algebras, by \eqref{anti id1} one can obtain \eqref{rho} and \eqref{mu}.

By \eqref{compatible anti-pre id1} we have 
\[
\begin{split}
-L_{\circ}([x,y]_2)(z)-L_{\ast}([x,y]_1)(z)&=[y,x]_2\circ z+[y,x]_1\ast z\\
&=x\circ (y\ast z)-y\circ (x\ast z)+x\ast (y\circ z)-y\ast (x\circ z)\\
&=L_{\circ}(x)L_{\ast}(y)(z)-L_{\circ}(y)L_{\ast}(x)(z)+L_{\ast}(x)L_{\circ}(y)(z)-L_{\ast}(y)L_{\circ}(x)(z).
\end{split}
\]
The last equality shows that it satisfies \eqref{rho-mu}. Thus $(-L_{\circ}, -L_{\ast}, A)$ is a representation by Proposition~\ref{representation}.
\end{proof}

Let $(A, \circ,\ast)$ be a compatible algebra. We call $(A, \circ,\ast)$ a \textbf{compatible Lie-admissible algebra} if the commutators $[-,-]_{1}, [-,-]_{2}:A\otimes A\rightarrow A$ make $(A, [-,-]_{1}, [-,-]_{2})$ a compatible Lie algebra where $[-,-]_1$ and $[-,-]_2$ are defined by \eqref{commutators}.

\begin{prop}
Let $A$ be a vector space with two bilinear operations $\circ, \ast: A\otimes A\rightarrow A$ such that $(A, \circ)$ and $(A, \ast)$ are anti-pre-Lie algebras. Then $(A, \circ, \ast)$ is a compatible anti-pre-Lie algebra if and only if  $(A, \circ, \ast)$ is a compatible Lie admissible algebra with a representation $(-\mathcal{L}_{\circ},-\mathcal{L}_{\ast}, A)$.
\end{prop}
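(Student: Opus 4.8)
The plan is to reduce both implications to the two identities \eqref{compatible anti-pre id1}--\eqref{compatible anti-pre id2} furnished by Proposition \ref{interchange}, matching \eqref{compatible anti-pre id1} against the representation axioms of Proposition \ref{representation} and \eqref{compatible anti-pre id2} against the compatible Jacobi identity \eqref{compatible lie algebra}. Under the standing hypothesis that $(A,\circ)$ and $(A,\ast)$ are anti-pre-Lie algebras, two facts are immediate and hold throughout. Since each commutator $[-,-]_1,[-,-]_2$ is the bracket of a Lie-admissible algebra, both $(A,[-,-]_1)$ and $(A,[-,-]_2)$ are Lie algebras, so by Proposition \ref{compatible lie} ``compatible Lie-admissible'' amounts precisely to the single identity \eqref{compatible lie algebra}. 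Moreover, applying \eqref{anti id1} to $\circ$ and to $\ast$ separately and using $\mathcal{L}_\circ(x)y=x\circ y$ shows that $\rho=-\mathcal{L}_\circ$ and $\mu=-\mathcal{L}_\ast$ satisfy \eqref{rho} and \eqref{mu}; thus the first two conditions of Proposition \ref{representation} are automatic and need never be revisited.

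The next step is to recognise that the remaining representation condition \eqref{rho-mu} is nothing but \eqref{compatible anti-pre id1}. Setting $\rho=-\mathcal{L}_\circ$, $\mu=-\mathcal{L}_\ast$, evaluating \eqref{rho-mu} on an arbitrary $z\in A$, and using $[x,y]_i=-[y,x]_i$ turns it term by term into \eqref{compatible anti-pre id1}. Hence, given the standing hypothesis, ``$(-\mathcal{L}_\circ,-\mathcal{L}_\ast,A)$ is a representation of $\g(A)$'' is equivalent to \eqref{compatible anti-pre id1} alone.

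The heart of the argument is a purely formal identity obtained by expanding each iterated bracket in \eqref{compatible lie algebra} through \eqref{commutators} and sorting the resulting twelve terms by their outer operation: the six terms in which the outer product is applied last reassemble exactly the left-hand side of \eqref{compatible anti-pre id2}, while the other six reassemble the negative of the left-hand side of \eqref{2-half jacobian}. Symbolically, the left-hand side of \eqref{compatible lie algebra} equals that of \eqref{compatible anti-pre id2} minus that of \eqref{2-half jacobian}. Invoking the Remark's identity \eqref{left to right}, which shows that under \eqref{compatible anti-pre id1} the left-hand side of \eqref{2-half jacobian} is the negative of the left-hand side of \eqref{compatible anti-pre id2}, one concludes that whenever \eqref{compatible anti-pre id1} holds the left-hand side of \eqref{compatible lie algebra} equals twice that of \eqref{compatible anti-pre id2}. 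As $2$ is invertible in $\mathbb{C}$, under \eqref{compatible anti-pre id1} the identities \eqref{compatible lie algebra} and \eqref{compatible anti-pre id2} are equivalent.

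Assembling these observations settles both directions. For the forward implication, a compatible anti-pre-Lie algebra satisfies \eqref{compatible anti-pre id1}--\eqref{compatible anti-pre id2} by Proposition \ref{interchange}; the first gives the representation, while the second (here one may directly use that both \eqref{compatible anti-pre id2} and, via \eqref{left to right}, \eqref{2-half jacobian} vanish) forces \eqref{compatible lie algebra}, hence compatible Lie-admissibility. Conversely, the representation hypothesis delivers \eqref{compatible anti-pre id1}, which activates the formal identity and converts the assumed \eqref{compatible lie algebra} into \eqref{compatible anti-pre id2}; Proposition \ref{interchange} then yields a compatible anti-pre-Lie algebra. I expect the only real obstacle to be the bookkeeping in the formal identity: one must track signs across all twelve expanded terms and, in particular, remember that the equality \eqref{left to right} is not itself formal but genuinely consumes \eqref{compatible anti-pre id1}, so it may be applied only once \eqref{compatible anti-pre id1} is available.
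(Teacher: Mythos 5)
Your proposal is correct and follows essentially the same route as the paper: identity \eqref{compatible anti-pre id1} is matched against the representation condition \eqref{rho-mu}, while the difference of \eqref{compatible anti-pre id2} and \eqref{2-half jacobian}, together with the Remark's identity \eqref{left to right} (which, as you rightly note, consumes \eqref{compatible anti-pre id1}), is matched against the compatible Jacobi identity \eqref{compatible lie algebra}. Your explicit factor-of-two step in the converse merely spells out what the paper compresses into ``by expanding \eqref{compatible lie algebra} one can show that \eqref{left to right} also holds,'' so the two arguments coincide in substance.
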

\begin{proof}
Suppose $(A, \circ, \ast)$ is a compatible anti-pre-Lie algebra. By (Proposition 2.2, \cite{LB}) $(A, \circ)$ and $(A,  \ast)$ are Lie-admissible algebras implying that $(A, [-,-]_{1})$ and $(A, [-,-]_{2})$ are Lie algebras. 

The difference of \eqref{compatible anti-pre id2} and \eqref{2-half jacobian} derives
\[
[x,[y,z]_2]_1+[x,[y,z]_1]_2+[y,[z,x]_2]_1+[y,[z,x]_1]_2+[z,[x,y]_2]_1+[z,[x,y]_1]_2=0,
\]
which is equivalent to \eqref{compatible lie algebra}. Hence, by Proposition \ref{compatible lie} $(A, [-,-]_1, [-,-]_2)$ is a compatible Lie algebra.

Conversely, let $(A, \circ, \ast)$ be a compatible Lie admissible algebra and $(-\mathcal{L}_{\circ},-\mathcal{L}_{\ast}, A)$ be its representation. According to (Proposition 2.2 \cite{LB}) $(A, \circ)$ and $(A, \ast)$ are anti-pre-Lie algebras whose representations are $(-\mathcal{L}_{\circ}, A)$ and $(-\mathcal{L}_{\ast}, A)$, respectively. By \eqref{rho-mu} for $\forall x,y,z\in A$ we have
\[
-\mathcal{L}_{\circ}([x,y]_2)z-\mathcal{L}_{\ast}([x,y]_1)z=\mathcal{L}_{\circ}(x)\mathcal{L}_{\ast}(y)z-\mathcal{L}_{\circ}(y)\mathcal{L}_{\ast}(x)z+\mathcal{L}_{\ast}(x)\mathcal{L}_{\circ}(y)z-\mathcal{L}_{\ast}(y)\mathcal{L}_{\circ}(x)z.
\]
Thus we have
\[
[y,x]_2\circ z+[y,x]_1\ast z=x\circ(y\ast z)-y\circ(x\ast z)+x\ast(y\circ z)-y\ast(x\circ z)
\]
which implies that \eqref{compatible anti-pre id1} holds.

Since $(A,[-,-]_1,[-,-])$ is a compatible Lie algebra, \eqref{compatible lie algebra} holds. By expanding \eqref{compatible lie algebra} one can show that \eqref{left to right} also holds. Hence, \eqref{compatible anti-pre id2} holds and this proves that $(A, \circ, \ast)$ is a compatible anti-pre-Lie algebra.
\end{proof}

\begin{defn}
A compatible associative algebra is a triple $(A, \cdot_1, \cdot_2)$ in which $(A, \cdot_1)$ and $(A, \cdot_2)$
are both associative algebras satisfying the following compatibility condition:
\[
(x \cdot_1 y) \cdot_2 z + (x \cdot_2 y) \cdot_1 z = x \cdot_1 (y \cdot_2 z) + x \cdot_2 (y \cdot_1 z), 
\]
for $x, y, z \in A$.
\end{defn}

\begin{prop}
Let $A$ be a vector space with two bilinear operations $\circ, \ast: A\otimes A\rightarrow A$. Suppose both operations are commutative i.e.
\[
x\circ y=y\circ x, \quad x\ast y=y\ast x, \quad \forall \ x,y\in A.
\]
Then $(A,\circ,\ast)$ is a compatible anti-pre-Lie algebra if and only if $(A,\circ,\ast)$ is a compatible associative algebra.
\end{prop}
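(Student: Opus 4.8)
The plan is to reduce everything to the single-operation case and to exploit that commutativity trivializes the "commutator" parts of both the anti-pre-Lie axioms and of the compatibility condition \eqref{compatible anti-pre id2}. First I would record the effect of commutativity: since $x\circ y=y\circ x$ and $x\ast y=y\ast x$, the commutators in \eqref{commutators} vanish identically, $[x,y]_1=[x,y]_2=0$. Consequently identity \eqref{anti id2} holds automatically for each of $\circ$ and $\ast$, the compatibility identity \eqref{compatible anti-pre id2} becomes the trivial $0=0$, the remaining axiom \eqref{anti id1} collapses to the left-symmetry relation $x\ast(y\ast z)=y\ast(x\ast z)$ (and likewise for $\circ$), and \eqref{compatible anti-pre id1} reduces to $x\circ(y\ast z)+x\ast(y\circ z)=y\circ(x\ast z)+y\ast(x\circ z)$ because its right-hand side is built from commutators and hence vanishes.

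The heart of the argument is the following single-operation lemma, which I would prove first: a commutative operation $\cdot$ on $A$ is anti-pre-Lie if and only if it is associative. For the forward direction, associativity together with commutativity gives $x\cdot(y\cdot z)=(x\cdot y)\cdot z=(y\cdot x)\cdot z=y\cdot(x\cdot z)$, which is exactly the reduced form of \eqref{anti id1}, while \eqref{anti id2} is automatic as noted. The converse is the only genuinely nontrivial computation: assuming commutativity and $x\cdot(y\cdot z)=y\cdot(x\cdot z)$, one recovers associativity through the chain $(x\cdot y)\cdot z=z\cdot(x\cdot y)=x\cdot(z\cdot y)=x\cdot(y\cdot z)$, where the outer equalities are commutativity and the middle one is the left-symmetry relation applied to $(z,x,y)$. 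I expect this short reassociation trick to be the main (indeed essentially the only) real obstacle; everything else is bookkeeping.

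With the lemma in hand, the compatible statement follows almost formally. Since $\circ$ and $\ast$ are commutative, so is every linear combination $k_1\circ+k_2\ast$; applying the lemma to each such operation, $(A,k_1\circ+k_2\ast)$ is anti-pre-Lie precisely when it is associative, for all $k_1,k_2\in\mathbb{C}$. By definition $(A,\circ,\ast)$ is a compatible anti-pre-Lie algebra exactly when $k_1\circ+k_2\ast$ is anti-pre-Lie for all $k_1,k_2$, hence exactly when $k_1\circ+k_2\ast$ is associative for all $k_1,k_2$; collecting the $k_1^2$-, $k_2^2$- and $k_1k_2$-coefficients in the expansion of associativity of $k_1\circ+k_2\ast$ returns associativity of $\circ$, associativity of $\ast$, and the compatibility identity in the definition of a compatible associative algebra, which is exactly the statement that $(A,\circ,\ast)$ is a compatible associative algebra.

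As a cross-check carried out directly at the level of the reduced identities (and using Proposition~\ref{interchange} to identify being a compatible anti-pre-Lie algebra with \eqref{compatible anti-pre id1} and \eqref{compatible anti-pre id2}), one can verify the equivalence of the surviving compatibility conditions without passing through linear combinations: writing $F(x,y,z)=x\circ(y\ast z)+x\ast(y\circ z)$, commutativity alone forces $F(x,y,z)=F(x,z,y)$ (symmetry in the last two slots), the reduced form of \eqref{compatible anti-pre id1} reads $F(x,y,z)=F(y,x,z)$ (symmetry in the first two slots), and the compatibility identity of a compatible associative algebra rewrites, again via commutativity, as $F(x,y,z)=F(z,x,y)$ (cyclic symmetry). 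Since $F$ is automatically symmetric in its last two slots, and since the transposition $(2\,3)$ together with either the transposition $(1\,2)$ or the $3$-cycle generates the full symmetric group $S_3$, adding either of the latter two symmetries is equivalent to full $S_3$-invariance of $F$; hence the two compatibility conditions are equivalent, confirming the claim.
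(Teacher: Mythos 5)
Your proof is correct, and although it shares the paper's skeleton (commutators vanish, so \eqref{anti id2} and \eqref{compatible anti-pre id2} trivialize; a commutative operation is anti-pre-Lie iff associative; then the mixed condition), it handles the crux genuinely differently --- and more carefully. The paper settles the mixed condition via the displayed equality \eqref{compatible asso}, presented as a consequence of commutativity; but read pointwise that equality is false: take $e_1\circ e_1=e_1$, $e_2\circ e_2=e_2$, $e_1\circ e_2=e_2\circ e_1=0$ and $e_1\ast e_1=e_2$ with all other $\ast$-products zero (both commutative, even associative); at $(x,y,z)=(e_1,e_2,e_1)$ the left side of \eqref{compatible asso} equals $-e_2$ while the right side is $0$. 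What is true is only that the two sides vanish \emph{simultaneously as universally quantified conditions}, and your $S_3$-argument for $F(x,y,z)=x\circ(y\ast z)+x\ast(y\circ z)$ --- symmetric in its last two slots by commutativity, with the two compatibility conditions being $(1\,2)$-invariance and invariance under a $3$-cycle, either of which together with $(2\,3)$ generates $S_3$ --- is exactly the missing justification. Your main route avoids the issue entirely: every $k_1\circ+k_2\ast$ is commutative, your single-operation lemma (whose converse step $(x\cdot y)\cdot z=z\cdot(x\cdot y)=x\cdot(z\cdot y)=x\cdot(y\cdot z)$ is the same reassociation underlying the paper's two correct single-operation identities) applies to each combination, and extracting the $k_1^2$-, $k_2^2$-, $k_1k_2$-coefficients over $\mathbb{C}$ identifies ``all combinations associative'' with the definition of a compatible associative algebra. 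In short, your argument proves the proposition in full, whereas the paper's step \eqref{compatible asso} is only correct after a repair equivalent to your symmetry argument.
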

\begin{proof}
It is obvious that \eqref{compatible anti-pre id2} and \eqref{anti id2} for both operations are hold. Furthermore, we have
\begin{align*}
x\circ (y\circ z)-y\circ(x\circ z)-[y,x]\circ z=(y\circ z)\circ x-y\circ (z\circ x),\\
x\ast (y\ast z)-y\ast(x\ast z)-[y,x]\ast z=(y\ast z)\ast x-y\ast (z\ast x),
\end{align*}
and
\begin{align}\nonumber
&x\circ (y\ast z)+x\ast (y\circ z)-y\circ (x\ast z)-y\ast(x\circ z)-[y,x]_2\circ z-[y,x]_1\ast z\\\label{compatible asso}
=&(x\circ y)\ast z+(x\ast y)\circ z-x\circ (y\ast z)-x\ast (y\circ z).
\end{align}
Then, \eqref{anti id1} holds for both operations if and only if $(A, \circ)$ and $(A, \ast)$ are associative. \eqref{compatible anti-pre id1} holds if and only if the right hand side of \eqref{compatible asso} equals zero. Hence the conclusion follows.
\end{proof}

\section{Compatible anti-pre-Lie algebras and anti-$\mathcal{O}$-operators on compatible Lie algebras.}

In this section we study  anti-$\mathcal{O}$-operators to interpret compatible anti-pre-Lie algebras. There is a structure of a compatible anti-pre-Lie algebra on
the representation space inducing from a strong anti-$\mathcal{O}$-operator on a compatible Lie algebra and in particular,
the existence of an invertible anti-$\mathcal{O}$-operator gives an equivalent condition for a compatible Lie algebra having a structure of a  compatible anti-pre-Lie algebra.

\begin{defn}\cite{DGC}
Let $(\g,[-,-])$ be a Lie algebra and $(\rho,V)$ be a
representation. A linear map $T:V\rightarrow \g$ is called an
\textbf{anti-$\mathcal O$-operator} 
associated to $(\rho,V)$ if $T$ satisfies
\begin{equation*}
[T(u),T(v)]=T(\rho(T(v))u-\rho(T(u))v), \forall u,v\in V.
\end{equation*}
Moreover,  anti-$\mathcal{O}$-operator $T$ is called strong if $T$ satisfies
\begin{equation*}
\rho([T(u),T(v)])w+\rho([T(v),T(w)])u+\rho([T(w),T(u)])v=0, \quad \forall \ u,v,w\in V.
\end{equation*}
\end{defn}

\begin{defn}
Let $(\g,[-,-]_{1}, [-,-]_{2})$ be a compatible Lie algebra and $(\rho, \mu, V)$ be a
representation. A linear map $T:V\rightarrow \g$ is called an
\textbf{anti-$\mathcal O$-operator} 
associated to $(\rho, \mu, V)$ if for any $k_1,k_2\in\mathbb{C}$, $T$ is an anti-$\mathcal{O}$-operator of the Lie algebra $(\g, k_1[-,-]_{1}+k_2[-,-]_{2})$ associated to the representation $k_1\rho+k_2\mu$.
\end{defn}

An anti-$\mathcal{O}$-operator of $(\g, [-, -])$ associated to the adjoint representation $(\mathrm{ad}, \g)$ which is defined by $\mathrm{ad}(x)(y)=[x,y]$ for all $x,y\in \g$
is called an anti-Rota-Baxter operator, that is, $R : \g \rightarrow \g$ is a linear map satisfying
\begin{equation*}
[R(x), R(y)] = R([R(y), x] + [y, R(x)]), \quad \forall \ x, y \in \g.  
\end{equation*}

An anti-Rota-Baxter operator $R$ is called strong if $R$ satisfies
\begin{equation}\label{strong anti rota}
[[R(x), R(y)], z] + [[R(y), R(z)], x] + [[R(z), R(x)], y] = 0, \quad \forall \ x, y, z \in \g.   
\end{equation}

\begin{prop}\label{t is strong}
Let $(\g, [-,-]_{1}, [-,-]_{2})$ be a compatible Lie
algebra and $(\rho, \mu, V) : \g \rightarrow gl(V)$ be a representation.
Let $T : V \rightarrow \g$ be an anti-$\mathcal{O}$-operator associated to $(\rho, \mu, V)$. Define
\begin{equation}\label{rho-mu to compatible anti-pre}
u \circ v = -\rho(T(u))v, \quad 
u \ast v = -\mu(T(u))v, \quad 
\forall \  u, v \in V.
\end{equation}
Then $(V, \circ, \ast)$ satisfies \eqref{compatible anti-pre id1}. Moreover, $(V, \circ, \ast)$ is a compatible Lie-admissible algebra such that $(V, \circ, \ast)$ is a compatible anti-pre-Lie algebra if and only if $T$ is strong.

Furthermore, $T$ is a homomorphism of compatible Lie algebras from the sub-adjacent Lie algebra $\g(V)$ to $(\g, [-,-]_{1}, [-,-]_{2})$. Moreover, there is an induced compatible anti-pre-Lie algebra structure on $T(V)=\{T(u)\ | \ u\in V\}$ given by
\begin{equation}\label{tu,tv}
T(u)\circ T(v)=T(u\circ v),  \quad T(u)\ast T(v)=T(u\ast v),  \quad \forall \ u,v\in V,
\end{equation}
and $T$ is a homomorphism of compatible anti-pre-Lie algebras.
\end{prop}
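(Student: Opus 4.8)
The plan is to begin by unpacking the two hypotheses on $T$ into pointwise identities, separating powers of $k_1$ and $k_2$. Since $T$ is an anti-$\mathcal{O}$-operator for every Lie algebra $(\g, k_1[-,-]_1 + k_2[-,-]_2)$ with representation $k_1\rho + k_2\mu$, and both sides of the defining relation are linear in $(k_1,k_2)$, comparing coefficients yields the two relations $[T(u),T(v)]_1 = T(\rho(T(v))u - \rho(T(u))v)$ and $[T(u),T(v)]_2 = T(\mu(T(v))u - \mu(T(u))v)$, which I will call (A) and (B). The strong condition is instead quadratic in $(k_1,k_2)$, so its coefficients of $k_1^2$, $k_2^2$ and $k_1k_2$ give three relations: the cyclic sum over $u,v,w$ of $\rho([T(u),T(v)]_1)w$ vanishes, the same with $\mu$ and $[-,-]_2$, and the mixed cyclic sum of $(\rho([T(u),T(v)]_2) + \mu([T(u),T(v)]_1))w$ vanishes. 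Strongness of $T$ is equivalent to all three holding at once.

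Next I would verify \eqref{compatible anti-pre id1}. Substituting \eqref{rho-mu to compatible anti-pre} turns its left side into $(\rho(T(u))\mu(T(v)) + \mu(T(u))\rho(T(v)) - \rho(T(v))\mu(T(u)) - \mu(T(v))\rho(T(u)))w$. For the right side I compute $[v,u]_1 = \rho(T(u))v - \rho(T(v))u$ and $[v,u]_2 = \mu(T(u))v - \mu(T(v))u$, apply (A) and (B) to rewrite $T([v,u]_1) = [T(v),T(u)]_1$ and $T([v,u]_2) = [T(v),T(u)]_2$, and then invoke \eqref{rho-mu} with $x = T(v)$, $y = T(u)$ to recognize the same operator expression. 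An analogous but shorter computation using \eqref{rho} with (A) shows $(V,\circ)$ satisfies \eqref{anti id1}, and using \eqref{mu} with (B) that $(V,\ast)$ does; crucially, these three identities hold with no strongness assumption.

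Then I would treat the remaining identities. Writing $[u,v]_1 \circ w = -\rho(T([u,v]_1))w = -\rho([T(u),T(v)]_1)w$ via (A), the cyclic sum in \eqref{anti id2} for $\circ$ becomes the negative of the $k_1^2$-coefficient of the strong condition, so the two are equivalent; likewise \eqref{anti id2} for $\ast$ matches the $k_2^2$-coefficient through (B), and expanding \eqref{compatible anti-pre id2} with both (A) and (B) matches the $k_1k_2$-coefficient. Hence \eqref{anti id2} for each operation together with \eqref{compatible anti-pre id2} hold if and only if $T$ is strong. Combined with the previous paragraph and Proposition~\ref{interchange}, this shows $(V,\circ,\ast)$ is a compatible anti-pre-Lie algebra precisely when $T$ is strong, and the compatible Lie-admissible description with representation $(-L_\circ, -L_\ast, V)$ then follows from the earlier characterization of compatible anti-pre-Lie algebras as compatible Lie-admissible algebras.

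Finally, relations (A) and (B) say exactly $T([u,v]_i) = [T(u),T(v)]_i$ for $i=1,2$, so $T$ intertwines both brackets and is a homomorphism of the compatible Lie algebras $\g(V) \to (\g,[-,-]_1,[-,-]_2)$. The substantive point, and the step I expect to be the main obstacle, is the well-definedness of \eqref{tu,tv} on $T(V)$, since $T$ need not be injective. Here I must show that $T(w)=0$ forces $T(\rho(T(u))w)=0$ and $T(\mu(T(u))w)=0$; this follows by taking the second argument of (A) (respectively (B)) to be such a $w$, whereupon the left side $[T(u),0]_1$ and the term $\rho(T(w))u$ both vanish, leaving $T(\rho(T(u))w)=0$, and similarly for $\mu$. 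Dependence on the first argument is immediate because $\circ$ and $\ast$ involve $u$ only through $\rho(T(u))$ and $\mu(T(u))$. Thus \eqref{tu,tv} descends to well-defined operations on $T(V)$, $T$ is by construction a surjective homomorphism of compatible anti-pre-Lie algebras, and since the defining identities transfer along a surjective homomorphism, $(T(V),\circ,\ast)$ is a compatible anti-pre-Lie algebra.
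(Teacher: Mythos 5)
Your proposal is correct and follows essentially the same route as the paper: the same direct verification of \eqref{compatible anti-pre id1} from the linearized anti-$\mathcal{O}$-operator identities together with \eqref{rho-mu}, and the same identification of strongness with the vanishing cyclic sums, which you organize as the $k_1^2$, $k_1k_2$, $k_2^2$ coefficients rather than the paper's single $\tfrac{1}{k_1k_2}$ assembly (with its ``also true for $k_1k_2=0$'' caveat). Your coefficientwise decomposition handles both directions of the equivalence more transparently, and your explicit check that \eqref{tu,tv} is well defined when $T$ is not injective (via $T(w)=0\Rightarrow T(\rho(T(u))w)=T(\mu(T(u))w)=0$) supplies a detail the paper dismisses as ``straightforward.''
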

\begin{proof}
By Proposition 2.14 in \cite{LB} $T$ is a strong anti-$\mathcal{O}$-operator of $(\g, [-,-]_{1})$ and $(\g, [-,-]_{2})$ associated to $(\rho,V)$ and $(\mu, V)$, respectively, if and only if $(V, \circ)$ and $(V, \ast)$ are Lie admissible.

Let $u,v,w\in V$. Then
\[
\begin{split}
[u,v]_{2}\circ w+[u,v]_{1}\ast w=&(\mu(T(v))u-\mu(T(u))v)\circ w+(\rho(T(v))u-\rho(T(u))v)\ast w\\
=&-\rho(T(\mu(T(v))u))w+\rho(T(\mu(T(u))v))w-\mu(T(\rho(T(v))u))w+\mu(T(\rho(T(u))v))w\\
=&\rho([T(v),T(u)]_{2})w+\mu([T(v),T(u)]_{1})w\\
=&\rho(T(v))\mu(T(u))w-\rho(T(u))\mu(T(v))w+\mu(T(v))\rho(T(u))w-\mu(T(u))\rho(T(v))w\\
=&v\circ (u\ast w)-u\circ (v\ast w)+v\ast (u\circ w)-u\ast(v\circ w).
\end{split}
\]
Thus \eqref{compatible anti-pre id1} holds on $(V, \circ, \ast)$. Moreover, $(V, \circ, \ast)$ is a compatible Lie-admissible algebra if and only if \eqref{compatible anti-pre id2} holds on $(V, \circ, \ast)$, that is,
\[
\begin{split}
&[u,v]_2\circ w+[u,v]_1\ast w+[v,w]_2\circ u+[v,w]_1\ast u+[w,u]_2\circ v+[w,u]_1\ast v\\
=&\rho([T(v),T(u)]_{2})w+\mu([T(v),T(u)]_{1})w+\rho([T(w),T(v)]_{2})u\\
&+\mu([T(w),T(v)]_{1})u+\rho([T(u),T(w)]_{2})v+\mu([T(u),T(w)]_{1})v\\
=&\frac{1}{k_1k_2}\Big(k_1^2\rho([T(u),T(v)]_{1})w+k_1^2\rho([T(v),T(w)]_{1})u+k_1^2\rho([T(w),T(u)]_{1})v\\
&+k_1k_2\rho([T(u),T(v)]_{2})w+k_1k_2\rho([T(v),T(w)]_{2})u+k_1k_2\rho([T(w),T(u)]_{2})v\\
&+k_1k_2\mu([T(u),T(v)]_{1})w+k_1k_2\mu([T(v),T(w)]_{1})u+k_1k_2\mu([T(w),T(u)]_{1})v\\
&+k_2^2\mu([T(u),T(v)]_{2})w+k_2^2\mu([T(v),T(w)]_{2})u+k_2^2\mu([T(w),T(u)]_{2})v\Big)\\
=&\frac{1}{k_1k_2}\Big((k_1\rho+k_2\mu)([T(u),T(v)]_{k_11+k_22})w+(k_1\rho+k_2\mu)([T(v),T(w)]_{k_11+k_22})u\\
&+(k_1\rho+k_2\mu)([T(w),T(u)]_{k_11+k_22})v\Big)\\
=&0.
\end{split}
\]
Note that this is also true for $k_1k_2=0$. The other parts are straightforward.
\end{proof}

\begin{cor}
Let $(\g, [-, -]_{1}, [-, -]_{2})$ be a compatible Lie algebra and $R : \g \rightarrow \g$ be a strong anti-Rota-Baxter
operator. Then
\begin{equation}\label{anti-rota to anti-pre}
x \circ y = -[R(x), y]_{1}, \quad x\ast y=-[R(x), y]_{2}, \quad \forall \ x, y \in \g    
\end{equation}
defines a compatible anti-pre-Lie algebra $(\g, \circ, \ast)$. Conversely, if $R:\g\rightarrow \g$ is a linear transformation on a compatible Lie algebra $(\g, [-,-]_{1}, [-,-]_{2})$ such that \eqref{anti-rota to anti-pre} defines a compatible anti-pre-Lie algebra, then $R$ satisfies \eqref{strong anti rota} and the following equation:
\[
[[R(x),R(y)]_{\g}+R([x,R(y)]_{\g}+[R(x),y]_{\g}),z]_{\g}=0,\quad  \forall \ x,y,z\in \g
\]
\end{cor}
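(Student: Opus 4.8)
The plan is to dispatch the two implications separately: the forward one is a specialization of Proposition~\ref{t is strong} to the adjoint representation, while the converse is a careful expansion of the two anti-pre-Lie identities. For the forward direction I would first record that the pair $(\mathrm{ad}_1,\mathrm{ad}_2,\g)$, where $\mathrm{ad}_i(x)y=[x,y]_i$, is a representation of $(\g,[-,-]_1,[-,-]_2)$: in Proposition~\ref{representation}, equations \eqref{rho} and \eqref{mu} are just the Jacobi identities of the two brackets, and \eqref{rho-mu} is exactly the compatible Lie relation \eqref{compatible lie algebra}. By definition a strong anti-Rota-Baxter operator $R$ is a strong anti-$\mathcal{O}$-operator $T=R$ associated to this adjoint representation, and substituting $\rho=\mathrm{ad}_1$, $\mu=\mathrm{ad}_2$, $T=R$ into \eqref{rho-mu to compatible anti-pre} recovers precisely the operations $x\circ y=-[R(x),y]_1$ and $x\ast y=-[R(x),y]_2$ of \eqref{anti-rota to anti-pre}. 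Proposition~\ref{t is strong} then yields at once that $(\g,\circ,\ast)$ is a compatible anti-pre-Lie algebra, so this direction needs no genuinely new computation.

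For the converse I would unpack the defining identities for $\circ$ (and symmetrically for $\ast$). Writing $[x,y]_\circ=x\circ y-y\circ x=[R(y),x]_1-[R(x),y]_1$ and applying the Jacobi identity of $[-,-]_1$ to the left-hand side of \eqref{anti id1}, a direct expansion shows that \eqref{anti id1} for $\circ$ is equivalent to
\[
[\,[R(x),R(y)]_1+R([R(x),y]_1+[x,R(y)]_1),\,z\,]_1=0,\qquad \forall\,x,y,z\in\g,
\]
which is the asserted extra equation. The key point is that this only forces the anti-Rota-Baxter defect $D_1(x,y):=[R(x),R(y)]_1+R([R(x),y]_1+[x,R(y)]_1)$ to be central, not to vanish. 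Since $R([x,y]_\circ)=[R(x),R(y)]_1-D_1(x,y)$, one finds $[x,y]_\circ\circ z=-[[R(x),R(y)]_1,z]_1+[D_1(x,y),z]_1$, and substituting the centrality of $D_1$ just obtained, the second identity \eqref{anti id2} for $\circ$ collapses to $\sum_{\mathrm{cyc}}[[R(x),R(y)]_1,z]_1=0$, that is, \eqref{strong anti rota}. The same argument applied to $\ast$ produces the corresponding statements for $[-,-]_2$, and the compatibility conditions \eqref{compatible anti-pre id1}--\eqref{compatible anti-pre id2} yield the mixed-bracket analogues.

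The main obstacle lies in the converse, which is not a naive reversal of the forward direction: identity \eqref{anti id1} gives only the weaker ``central defect'' condition rather than the genuine anti-Rota-Baxter identity $D_1=0$. The delicate step is therefore to carry out the expansions in the correct order --- first extracting the centrality of $D_1$ from \eqref{anti id1}, and only then feeding it into \eqref{anti id2} to reduce the latter to \eqref{strong anti rota} --- all while keeping careful track of the signs hidden in $[x,y]_\circ$ and in $R([x,y]_\circ)$.
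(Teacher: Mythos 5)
Your proposal is correct and takes essentially the same route as the paper: the forward direction is the paper's own argument, namely specializing Proposition~\ref{t is strong} to $\rho=\mathrm{ad}_1$, $\mu=\mathrm{ad}_2$, $T=R$. For the converse the paper merely cites the definitions and Corollary 2.15 of \cite{LB}; your explicit expansion --- extracting centrality of the defect $D(x,y)=[R(x),R(y)]+R([R(x),y]+[x,R(y)])$ from \eqref{anti id1} and \eqref{compatible anti-pre id1}, and only then feeding it into \eqref{anti id2} and \eqref{compatible anti-pre id2} to recover \eqref{strong anti rota} --- is exactly the computation being outsourced there, correctly carried out (including the key observation that one gets a central defect, not $D=0$) and extended to the mixed brackets.
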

\begin{proof}
By setting $\rho=\mathrm{ad}_1, \mu=\mathrm{ad}_2$, the first part follows from above Proposition. The second part comes from the definitions and Corollary 2.15 in \cite{LB}.
\end{proof}


Now we consider  invertible anti-$\mathcal{O}$-operators.

\begin{prop}\label{invertible to strong}
Let $(\g,[-,-]_{1}, [-,-]_{2})$ be a compatible Lie algebra and $(\rho, \mu, \g)$ be its representation. Then an invertible anti-$\mathcal{O}$-operator associated to $(\rho, \mu, \g)$ is strong.
\end{prop}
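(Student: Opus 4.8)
The plan is to unwind the definition of a strong anti-$\mathcal O$-operator and then exploit invertibility to rewrite the strong condition as a statement that is automatically forced by the operator equation. Recall that an anti-$\mathcal O$-operator $T$ on the compatible Lie algebra associated to $(\rho,\mu,\g)$ is, by definition, simultaneously an anti-$\mathcal O$-operator of every Lie algebra $(\g, k_1[-,-]_1+k_2[-,-]_2)$ relative to the representation $k_1\rho+k_2\mu$. Hence it suffices to prove the purely one-operator fact: an invertible anti-$\mathcal O$-operator on a single Lie algebra $(\g,[-,-])$ associated to a representation $(\rho,\g)$ is automatically strong. Indeed, if this holds for each $(\g, k_1[-,-]_1+k_2[-,-]_2)$ with $k_1\rho+k_2\mu$, then $T$ satisfies the strong condition for every $(k_1,k_2)$, which is exactly what it means for $T$ to be a strong anti-$\mathcal O$-operator of the compatible Lie algebra. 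So the whole proposition reduces to the scalar case, and I would state that reduction first.

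For the scalar case, write $x=T(u)$, $y=T(v)$, $z=T(w)$, which is legitimate because $T$ is invertible, so every element of $\g$ arises uniquely as $T$ of something. The strong condition to be checked is
\begin{equation*}
\rho([T(u),T(v)])w+\rho([T(v),T(w)])u+\rho([T(w),T(u)])v=0.
\end{equation*}
The idea is to substitute the anti-$\mathcal O$-operator relation $[T(u),T(v)]=T(\rho(T(v))u-\rho(T(u))v)$ into the first argument of each $\rho$, giving $\rho(T(\rho(T(v))u-\rho(T(u))v))w$ and its cyclic mates. Setting $u\circ v:=-\rho(T(u))v$ as in \eqref{rho-mu to compatible anti-pre} (one-operator version), each term becomes $-\rho(T(u\circ v - v\circ u))w = (u\circ v-v\circ u)\circ w$, i.e. $[u,v]\circ w$ for the commutator of the induced product. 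Thus the strong condition is equivalent, via the invertible change of variables, to the identity \eqref{anti id2},
\begin{equation*}
[u,v]\circ w+[v,w]\circ u+[w,u]\circ v=0,
\end{equation*}
holding in $(V,\circ)$.

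To close the argument I would invoke what is already established for a single anti-$\mathcal O$-operator: by Proposition~2.14 of \cite{LB} (cited in the proof of Proposition~\ref{t is strong}), the induced product $u\circ v=-\rho(T(u))v$ makes $(\g,\circ)$ a Lie-admissible algebra, so $[-,-]$ satisfies the Jacobi identity; but more to the point, for an \emph{invertible} $T$ the induced product is in fact an honest anti-pre-Lie algebra structure, and \eqref{anti id2} is one of its defining axioms. Concretely, the identity $x\circ(y\ast z)-y\circ(x\ast z)=[y,x]\ast z$-type relation together with invertibility transports the full Jacobi identity of $\g$ back through $T$ into precisely \eqref{anti id2} for $\circ$. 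The main obstacle, and the step deserving the most care, is verifying that the cyclic sum really collapses to \eqref{anti id2} after the substitution: one must track the signs coming from the two terms $\rho(T(v))u$ and $-\rho(T(u))v$ inside each $\rho$, confirm that the three antisymmetrized pairs reassemble into $[u,v]\circ w+[v,w]\circ u+[w,u]\circ v$, and then cite the anti-pre-Lie axiom \eqref{anti id2} (valid because $T$ invertible forces $T(V)=\g$ and the induced structure to be genuinely anti-pre-Lie) to conclude the sum vanishes. Everything else is a routine bookkeeping of the scalar-to-compatible reduction.
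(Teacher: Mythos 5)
Your proposal is correct in substance but takes a genuinely different route from the paper. You linearize first: since the definition of an anti-$\mathcal{O}$-operator on a compatible Lie algebra makes $T$ an invertible anti-$\mathcal{O}$-operator of every pencil member $(\g, k_1[-,-]_{1}+k_2[-,-]_{2})$ relative to $k_1\rho+k_2\mu$, the scalar theorem of Liu--Bai (\cite{LB}, Proposition 2.17: an invertible anti-$\mathcal{O}$-operator of a single Lie algebra is strong) applies to each $(k_1,k_2)$ separately, and strongness for every pencil member is exactly what strongness means in the compatible setting (the paper never states this definition explicitly, but it is the reading used implicitly in the proof of Proposition \ref{t is strong}, so your reduction is sound). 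The paper instead stays at the compatible level: it invokes the scalar fact only at $(1,0)$ and $(0,1)$ to get that $(V,\circ)$ and $(V,\ast)$ defined by \eqref{rho-mu to compatible anti-pre} are anti-pre-Lie, then uses invertibility to transport the bracket, $[u,v]_{\g}=T^{-1}([T(u),T(v)]_{\g})$, concluding that $(V,\circ,\ast)$ is compatible Lie-admissible, and finally appeals to the equivalence in Proposition \ref{t is strong} (compatible Lie-admissibility of the induced products if and only if $T$ is strong), whose proof contains the cross-term computation with the $\tfrac{1}{k_1k_2}$ factor. Your reduction buys simplicity, bypassing that cross-term computation entirely since each fixed $(k_1,k_2)$ presents an honest single Lie algebra with a single representation; the paper's route buys reuse of Proposition \ref{t is strong} and makes the induced compatible anti-pre-Lie structure on $V$ explicit. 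Both arguments rest on the same engine: transport of the Jacobi identity through the invertible $T$.

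One repair is needed in your scalar-case sketch. You cite Proposition 2.14 of \cite{LB} to conclude that the induced product $u\circ v=-\rho(T(u))v$ is Lie-admissible; but that proposition is the \emph{equivalence} ``$T$ strong if and only if the induced product is Lie-admissible,'' so invoking it here assumes the conclusion --- and likewise the claim ``for invertible $T$ the induced product is an honest anti-pre-Lie structure'' is precisely \cite{LB} Proposition 2.17, the fact being proved. The non-circular closing move, which you do name (``invertibility transports the full Jacobi identity''), should be made the actual argument: the anti-$\mathcal{O}$-relation gives $u\circ v-v\circ u=T^{-1}([T(u),T(v)])$, so the commutator of $\circ$ satisfies Jacobi because the bracket of $\g$ does; identity \eqref{anti id1} holds automatically for the induced product of any anti-$\mathcal{O}$-operator; and given \eqref{anti id1}, the Jacobi identity of the commutator equals twice the cyclic sum in \eqref{anti id2} (harmless over $\mathbb{C}$), which your substitution correctly identified with the strong condition. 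Alternatively, simply cite \cite{LB} Proposition 2.17 wholesale for the scalar case, as the paper itself does at the pure specializations; then your pencil-by-pencil reduction completes the proof immediately.
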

\begin{proof}
Let $T : V \rightarrow \g$ be an anti-$\mathcal{O}$-operator of a compatible Lie algebra $(\g, [-, -]_{1},[-,-]_{2})$ associated to a representation $(\rho, \mu, V )$. Define  bilinear operations $\circ, \ast : V \otimes V \rightarrow V$ by \eqref{rho-mu to compatible anti-pre}. Then by choosing $k_1=1, k_2=0$ and $k_1=0, k_2=1$ we know $T$ is a strong anti-$\mathcal{O}$-operator for $(V, [-,-]_{1})$ and $(V, [-,-]_{2})$ such that $(V, \circ)$ and $(V, \ast)$ are anti-pre-Lie algebras  (see, Proposition 2.17,  \cite{LB}). For all $u,v\in V$ we have
\[
\begin{split}
[u,v]_{\g}=&k_1[u,v]_{1}+k_2[u,v]_{2}\\
=&k_1(\rho(T(v))u-\rho(T(u))v)+k_2(\mu(T(v))u-\mu(T(u))v)\\
=&(k_1\rho+k_2\mu)(T(v))u-(k_1\rho+k_2\mu)(T(u))v\\
=&T^{-1}([T(u),T(v)]_{\g}).
\end{split}
\]
Therefore, for all $u,v,w\in V$, we have
\[
\begin{split}
[[u,v]_{\g},w]_{\g}=&[T^{-1}([T(u),T(v)]_{\g}),w]_{\g}\\
=&T^{-1}([[T(u),T(v)]_{\g},T(w)]_{\g})
\end{split}
\]
This implies $(V,[-,-]_{\g })$ is a Lie algebra as well as $(V,[-,-]_{1}, [-,-]_{2})$ is a compatible Lie algebra such that $(V,\circ, \ast)$ is  compatible Lie admissible. Then $T$ is strong due to Proposition \ref{t is strong}.
\end{proof}

\begin{cor}
Let $(\g, [-,-]_{1}, [-,-]_{2})$ be a compatible Lie algebra. Then there is a structure of a  compatible anti-pre-Lie algebra on $\g$ if and only if there exists an invertible anti-$\mathcal{O}$-operator of $(\g, [-,-]_{1}, [-,-]_{2})$.
\end{cor}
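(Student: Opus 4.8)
The plan is to prove the two implications separately, leaning entirely on the machinery already built: Proposition~\ref{invertible to strong} and Proposition~\ref{t is strong} for the backward direction, and the identity map together with the (unlabelled) Proposition asserting that $(-L_{\circ},-L_{\ast},\g)$ is a representation of $\g(A)$ for the forward direction. Throughout I read ``a structure of a compatible anti-pre-Lie algebra on $\g$'' as a pair of operations $(\circ,\ast)$ making $(\g,\circ,\ast)$ a compatible anti-pre-Lie algebra whose two commutators \eqref{commutators} recover the prescribed brackets $[-,-]_{1}$ and $[-,-]_{2}$.

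For the backward implication, suppose $T:V\rightarrow\g$ is an invertible anti-$\mathcal{O}$-operator associated to some representation $(\rho,\mu,V)$. By Proposition~\ref{invertible to strong} the operator $T$ is strong, so Proposition~\ref{t is strong} endows $V$ with a compatible anti-pre-Lie algebra structure via \eqref{rho-mu to compatible anti-pre}, and transports it along the bijection $T$ to the induced structure \eqref{tu,tv} on $T(V)=\g$. The one thing left to verify is that the sub-adjacent compatible Lie brackets of this induced structure coincide with the given $[-,-]_{1},[-,-]_{2}$. This is immediate from Proposition~\ref{t is strong}, which states that $T$ is simultaneously a homomorphism of the sub-adjacent compatible Lie algebras and of the compatible anti-pre-Lie algebras: the commutator of the induced product at $x,y\in\g$ is $T$ applied to the corresponding commutator at $T^{-1}(x),T^{-1}(y)$ in $\g(V)$, which the homomorphism property sends exactly to $[x,y]_{1}$ and $[x,y]_{2}$.

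For the forward implication, suppose $(\g,\circ,\ast)$ is a compatible anti-pre-Lie algebra with commutators $[-,-]_{1},[-,-]_{2}$. I claim that $\mathrm{id}_{\g}$ is the required invertible anti-$\mathcal{O}$-operator, associated to the representation $(-L_{\circ},-L_{\ast},\g)$ of the sub-adjacent compatible Lie algebra. For arbitrary $k_{1},k_{2}\in\mathbb{C}$ set $\star=k_{1}\circ+k_{2}\ast$; then $(\g,\star)$ is an anti-pre-Lie algebra, its commutator is $k_{1}[-,-]_{1}+k_{2}[-,-]_{2}$, and its negative left multiplication operator is $k_{1}(-L_{\circ})+k_{2}(-L_{\ast})$. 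Writing out the anti-$\mathcal{O}$-operator equation for $T=\mathrm{id}_{\g}$ and $\rho=-L_{\star}$ reduces it to $x\star y-y\star x=[x,y]_{\star}$, which holds by definition of the commutator. Thus $\mathrm{id}_{\g}$ is an anti-$\mathcal{O}$-operator of $(\g,k_{1}[-,-]_{1}+k_{2}[-,-]_{2})$ associated to $k_{1}(-L_{\circ})+k_{2}(-L_{\ast})$ for every $k_{1},k_{2}$, hence an anti-$\mathcal{O}$-operator of the compatible Lie algebra, and it is trivially invertible.

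I expect the only genuinely delicate point to be the bookkeeping in the backward direction, namely confirming that the sub-adjacent brackets of the induced structure on $\g=T(V)$ match $[-,-]_{1}$ and $[-,-]_{2}$; this rests on correctly reading off the two homomorphism properties from Proposition~\ref{t is strong} rather than on any fresh computation. The forward direction is essentially the remark that the identity map always serves as an invertible anti-$\mathcal{O}$-operator — the compatible analogue of the familiar fact for a single anti-pre-Lie algebra — so no obstacle arises there.
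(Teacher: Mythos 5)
Your proposal is correct and takes essentially the same route as the paper: in the forward direction the identity map is exhibited as an invertible anti-$\mathcal{O}$-operator associated to $(-\mathcal{L}_{\circ},-\mathcal{L}_{\ast},\g)$, and in the backward direction Proposition~\ref{invertible to strong} (invertible implies strong) combined with Proposition~\ref{t is strong} transports the structure along $T$ to $T(V)=\g$ via \eqref{tu,tv}. The only cosmetic difference is that you read off the matching of the sub-adjacent brackets from the homomorphism statements in Proposition~\ref{t is strong}, whereas the paper re-derives it by the explicit computation $[x,y]_{\g}=T\bigl((k_1\rho+k_2\mu)(T(v))u-(k_1\rho+k_2\mu)(T(u))v\bigr)=k_1(x\circ_{\g}y-y\circ_{\g}x)+k_2(x\ast_{\g}y-y\ast_{\g}x)$.
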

\begin{proof}
Let $(\g, \circ, \ast)$ be a structure of a  compatible anti-pre-Lie algebra on $(\g, [-,-]_{1}, [-,-]_{2})$. Then
\[
\begin{split}
[x,y]_{\g}=&k_1[x,y]_{1}+k_2[x,y]_{2}\\
=&k_1(x\circ y-y\circ x)+k_2(x\ast y-y\ast x)\\
=&k_1(-\mathcal{L}_{\circ}(y)-(-\mathcal{L}_{\circ}(x))y)+k_2(-\mathcal{L}_{\ast}(y)-(-\mathcal{L}_{\ast}(x))y)
\end{split}
\]
Thus $T=\mathrm{id}$ is an invertible anti-$\mathcal{O}$-operator of $(\g, [-,-]_{1}, [-,-]_{2})$ associated to $(-\mathcal{L}_{\circ}, -\mathcal{L}_{\ast},\g)$.

Conversely, suppose that $T:V\rightarrow \g$ is an invertible anti-$\mathcal{O}$-operator of $(\g, [-,-]_{1}, [-,-]_{2})$ associated to $(\rho, \mu, V)$. Then by Proposition \ref{invertible to strong}, $T$ is strong. Also, there is a structure of a compatible anti-pre-Lie algebra on the underlying vector space of $\g$ given by \eqref{tu,tv} by Proposition \ref{t is strong}. Explicitly, for any $x,y\in\g$, there exist $u,v\in V$ such that $x=T(u), y=T(v)$. Hence, 
\[
\begin{split}
&x\circ_{\g} y=T(u)\circ_{\g}T(v)=T(u\circ v)=-T(\rho(x)T ^{-1}(y)), \quad \forall \ x,y\in\g,\\
&x\ast_{\g} y=T(u)\ast_{\g}T(v)=T(u\ast v)=-T(\mu(x)T ^{-1}(y)), \quad \forall \ x,y\in\g.
\end{split}
\]
Furthermore, we have
\[
\begin{split}
[x,y]_{\g}=&[T(u),T(v)]\\
=&T((k_1\rho+k_2\mu)(T(v))u-(k_1\rho+k_2\mu)(T(u))v)\\
=&k_1T(\rho(y)T ^{-1}(x)-\rho(x)T ^{-1}(y))+k_2T(\mu(y)T ^{-1}(x)-\mu(x)T ^{-1}(y))\\
=&k_1(x\circ_{\g}y-y\circ_{\g}x)+k_2(x\ast_{\g}y-y\ast_{\g}x).
\end{split}
\]
Thus $(\g, \circ, \ast)$ is a compatible anti-pre-Lie algebra whose sub-adjacent Lie algebra is $(\g, [-,-]_{1}, [-,-]_{2})$.
\end{proof}

\section{Compatible anti-pre-Lie algebras and commmutative $2$-cocycles on compatible Lie algebras}

Let us recall the notion of commutative $2$-cocycles on Lie algebras.
\begin{defn}\cite{DZ}
A commutative $2$-cocycle $\mathcal{B}$ on a Lie algebra $(\g, [-,-])$ is a symmetric
bilinear form such that
\[
\mathcal{B}([x,y],z)+\mathcal{B}([y,z],x)+\mathcal{B}([z,x],y)=0, \quad \forall \ x,y,z\in\g.
\]
\end{defn}

\begin{prop}\cite{WB}
Let $(\rho, \mu, V)$ be a representation of a
compatible Lie algebra $(\g, [-,-]_{1}, [-,-]_{2})$. Then $(\rho^{*}, \mu^{*}, V^{*})$
is a representation of $(\g, [-,-]_{1}, [-,-]_{2})$, which is called the
dual representation of $(\rho, \mu, V)$, where $\rho^{*}$ and $\mu^{*}$ are given by 
\[
\begin{split}
&\langle\rho^{*}(x)a^{*},b\rangle=-\langle a^{*},\rho(x) b\rangle, \quad \forall \ x\in\g, \ a^{*}\in V^{*}, \ b\in V,\\
&\langle\mu^{*}(x)a^{*},b\rangle=-\langle a^{*},\mu(x) b\rangle, \quad \forall \ x\in\g, \ a^{*}\in V^{*}, \ b\in V.
\end{split}
\]
In particular, $(\mathrm{ad}^{*}_{1}, \mathrm{ad}^{*}_{2}, \g^{*})$ is the dual representation of the representation $(\mathrm{ad}_{1}, \mathrm{ad}_{2}, \g)$.
\end{prop}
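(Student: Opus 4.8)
The plan is to invoke the characterization of representations given in Proposition~\ref{representation}: it suffices to check that the triple $(\rho^{*}, \mu^{*}, V^{*})$ satisfies the three identities \eqref{rho}, \eqref{mu} and \eqref{rho-mu}. Each of these is an equality of operators on $V^{*}$, and the natural way to verify such an equality is to pair both sides against an arbitrary $b\in V$ and push everything onto the $V$-side using the defining relations $\langle\rho^{*}(x)a^{*},b\rangle=-\langle a^{*},\rho(x)b\rangle$ and $\langle\mu^{*}(x)a^{*},b\rangle=-\langle a^{*},\mu(x)b\rangle$.

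First I would dispatch \eqref{rho} and \eqref{mu}. These are precisely the statements that $\rho^{*}$ and $\mu^{*}$ are, individually, representations of the Lie algebras $(\g,[-,-]_{1})$ and $(\g,[-,-]_{2})$, which is the classical fact that the dual of a Lie algebra representation is again a representation. Concretely, for $a^{*}\in V^{*}$ and $b\in V$ one computes $\langle\rho^{*}(x)\rho^{*}(y)a^{*},b\rangle=\langle a^{*},\rho(y)\rho(x)b\rangle$, so that pairing $\rho^{*}(x)\rho^{*}(y)-\rho^{*}(y)\rho^{*}(x)$ against $b$ returns $-\langle a^{*},(\rho(x)\rho(y)-\rho(y)\rho(x))b\rangle=-\langle a^{*},\rho([x,y]_{1})b\rangle=\langle\rho^{*}([x,y]_{1})a^{*},b\rangle$, where the middle step uses \eqref{rho} for $\rho$. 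The computation for $\mu^{*}$ is identical with $[-,-]_{2}$ in place of $[-,-]_{1}$.

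The main work is the mixed identity \eqref{rho-mu}. Pairing its left-hand side for $(\rho^{*},\mu^{*})$ against $b$ and using the definitions turns it into $-\langle a^{*},(\rho([x,y]_{2})+\mu([x,y]_{1}))b\rangle$, which by \eqref{rho-mu} for $(\rho,\mu)$ equals $-\langle a^{*},(\rho(x)\mu(y)-\rho(y)\mu(x)+\mu(x)\rho(y)-\mu(y)\rho(x))b\rangle$. On the other hand, pairing the right-hand side term by term and repeatedly moving operators across the pairing (each transfer contributing a sign and reversing the order of the two operators, e.g.\ $\langle\rho^{*}(x)\mu^{*}(y)a^{*},b\rangle=\langle a^{*},\mu(y)\rho(x)b\rangle$) reassembles the four resulting products into exactly the same expression. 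The step I expect to need the most care is the bookkeeping here: transposition reverses the order of each product, so one must check that the order-reversal is absorbed by the sign flips and that the four terms produced by the right-hand side match, with correct signs, those coming from the left-hand side; this is the only place where the precise form of \eqref{rho-mu} (rather than its two ``pure'' pieces) is used.

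Finally, for the ``in particular'' statement I would apply the result just proved to $\rho=\mathrm{ad}_{1}$ and $\mu=\mathrm{ad}_{2}$. Here one first notes that $(\mathrm{ad}_{1},\mathrm{ad}_{2},\g)$ is genuinely a representation of $(\g,[-,-]_{1},[-,-]_{2})$: by Proposition~\ref{representation} this amounts to the Jacobi identities for $[-,-]_{1}$ and $[-,-]_{2}$ together with the compatibility identity \eqref{compatible lie algebra} of Proposition~\ref{compatible lie}, all of which hold in a compatible Lie algebra. Its dual is then $(\mathrm{ad}_{1}^{*},\mathrm{ad}_{2}^{*},\g^{*})$ by the construction above, which gives the asserted special case.
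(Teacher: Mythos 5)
Your proof is correct. Note first that the paper itself supplies no argument for this statement: it is quoted from \cite{WB} and stated without proof, so there is no in-paper proof to compare against; your verification is therefore judged on its own, and it checks out. Your route --- reducing the claim to the three identities of Proposition~\ref{representation} and transferring operators across the pairing --- is sound, and the delicate point you flagged is handled correctly: in the mixed identity each single transfer contributes one sign, a double transfer contributes none but reverses the order of the product, and the right-hand side of \eqref{rho-mu} is exactly anti-invariant under this order reversal (pairing gives $\langle a^{*},(\mu(y)\rho(x)-\mu(x)\rho(y)+\rho(y)\mu(x)-\rho(x)\mu(y))b\rangle$, which is the negative of the untransposed sum), so both sides agree. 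Your treatment of the ``in particular'' clause is also right: condition (3) of Proposition~\ref{representation} for $(\mathrm{ad}_{1},\mathrm{ad}_{2})$, after moving terms across with antisymmetry, is precisely the compatibility identity \eqref{compatible lie algebra}. For what it is worth, there is a shorter route that makes the sign bookkeeping automatic: since $(k_1\rho+k_2\mu)^{*}=k_1\rho^{*}+k_2\mu^{*}$ and the dual of a representation of a single Lie algebra is again a representation, the definition of a representation of a compatible Lie algebra (namely that $k_1\rho+k_2\mu$ represents $(\g,k_1[-,-]_{1}+k_2[-,-]_{2})$ for all $k_1,k_2$) dualizes term-free, and likewise $k_1\mathrm{ad}_{1}+k_2\mathrm{ad}_{2}$ is the adjoint representation of the combined bracket, which dispatches the special case. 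Your componentwise check via Proposition~\ref{representation} proves the same thing with more explicit computation; either is acceptable.
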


\begin{thm}\label{thm:commutative $2$-cocycles and anti-pre-Lie
algebras}
Let $\mathcal{B}$ be a nondegenerate commutative $2$-cocycle on a compatible Lie algebra $(\g, [-,-]_{1}, [-,-]_{2})$. Then there exists a structure of a compatible anti-pre-Lie algebra on $(\g, [-,-]_{1}, [-,-]_{2})$ given by
\[
\mathcal{B}(x\circ y, z)=\mathcal{B}(y,[x,z]_{1}), \quad \mathcal{B}(x\ast y,z)=\mathcal{B}(y,[x,z]_{2}), \quad \forall \ x,y,z\in\g.
\]
\end{thm}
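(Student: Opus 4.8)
The plan is to realize the form $\mathcal{B}$ as an invertible anti-$\mathcal{O}$-operator and then invoke the machinery of Section~3. Since $\mathcal{B}$ is nondegenerate, the map $\flat:\g\to\g^{*}$, $\flat(x)=\mathcal{B}(x,-)$, is a linear isomorphism; write $T=\flat^{-1}:\g^{*}\to\g$. Nondegeneracy also guarantees that the two displayed equations determine $\circ$ and $\ast$ uniquely. I first record the standing interpretation that ``$\mathcal{B}$ is a commutative $2$-cocycle on $(\g,[-,-]_{1},[-,-]_{2})$'' means $\mathcal{B}$ is a commutative $2$-cocycle for each of $[-,-]_{1}$ and $[-,-]_{2}$, equivalently, by linearity, for every $k_1[-,-]_{1}+k_2[-,-]_{2}$. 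With this in place I would aim to show that $T$ is an invertible anti-$\mathcal{O}$-operator associated to the dual representation $(\mathrm{ad}^{*}_{1},\mathrm{ad}^{*}_{2},\g^{*})$ furnished by the dual-representation proposition above.

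The key step is the anti-$\mathcal{O}$-operator identity for each bracket. Writing $x=T(a^{*})$, $y=T(b^{*})$, the claim $[x,y]_{i}=T\big(\mathrm{ad}^{*}_{i}(y)\flat(x)-\mathrm{ad}^{*}_{i}(x)\flat(y)\big)$ is, after applying $\flat$ and pairing with an arbitrary $z\in\g$, exactly the assertion
\[
\mathcal{B}([x,y]_{i},z)=\mathcal{B}(y,[x,z]_{i})-\mathcal{B}(x,[y,z]_{i}),\qquad i=1,2,
\]
which follows immediately from the symmetry of $\mathcal{B}$ together with the cocycle relation $\mathcal{B}([x,y]_{i},z)+\mathcal{B}([y,z]_{i},x)+\mathcal{B}([z,x]_{i},y)=0$. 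Since this holds for $i=1$ and $i=2$, it holds for every combination $k_1[-,-]_{1}+k_2[-,-]_{2}$, so $T$ is an anti-$\mathcal{O}$-operator associated to $(\mathrm{ad}^{*}_{1},\mathrm{ad}^{*}_{2},\g^{*})$. As $T$ is invertible, Proposition~\ref{invertible to strong} gives that $T$ is strong, and then Proposition~\ref{t is strong} produces a compatible anti-pre-Lie structure on $T(\g^{*})=\g$ via \eqref{tu,tv}, whose subadjacent compatible Lie algebra is $(\g,[-,-]_{1},[-,-]_{2})$ because $T$ is an isomorphism of compatible Lie algebras. Finally I would unwind the transport: for $a^{*}=\flat(x)$, $b^{*}=\flat(y)$ one has $\flat(x\circ y)=-\mathrm{ad}^{*}_{1}(x)\flat(y)$, and evaluating on $z$ recovers precisely $\mathcal{B}(x\circ y,z)=\mathcal{B}(y,[x,z]_{1})$, and similarly for $\ast$; thus the induced structure is the one in the statement.

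I expect the genuinely substantive point to be only the one-line cocycle computation above, the rest being the bookkeeping of transporting the structure along $T$ and matching the stated formulas. As an alternative, entirely self-contained route one could bypass Section~3: first apply the single-algebra result of \cite{LB} to each $(\g,[-,-]_{i})$ to see that $(\g,\circ)$ and $(\g,\ast)$ are anti-pre-Lie algebras, then check, using symmetry and the cocycle identity, that $x\circ y-y\circ x=[x,y]_{1}$ and $x\ast y-y\ast x=[x,y]_{2}$, so that the commutators \eqref{commutators} agree with the given brackets. It then remains, by Proposition~\ref{interchange}, to verify \eqref{compatible anti-pre id1} and \eqref{compatible anti-pre id2}; pairing each with $\mathcal{B}(-,w)$ and using nondegeneracy reduces both to the compatible Lie relation \eqref{compatible lie algebra}. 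Here \eqref{compatible anti-pre id1} reduces cleanly, the six resulting terms being exactly the cyclic form of \eqref{compatible lie algebra}, whereas \eqref{compatible anti-pre id2} is the most laborious: after pairing one must repeatedly apply the cocycle identity to move the inner brackets into position before \eqref{compatible lie algebra} can be recognized, and this rearrangement is where the main care is needed.
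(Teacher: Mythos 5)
Your proposal is correct and follows essentially the same route as the paper: both define the musical isomorphism induced by $\mathcal{B}$, use the symmetry of $\mathcal{B}$ together with the cocycle identity to show its inverse is an invertible anti-$\mathcal{O}$-operator associated to the dual representation $(\mathrm{ad}_{1}^{*},\mathrm{ad}_{2}^{*},\g^{*})$, then invoke Proposition~\ref{invertible to strong} and the corollary on invertible anti-$\mathcal{O}$-operators to transport the compatible anti-pre-Lie structure back to $\g$ and verify the stated formulas. The only cosmetic difference is that you check the anti-$\mathcal{O}$-operator identity for each bracket $i=1,2$ and extend by bilinearity, while the paper computes directly with the combined bracket $k_1[-,-]_{1}+k_2[-,-]_{2}$; these are equivalent (and your sign convention in the key identity $\mathcal{B}([x,y]_{i},z)=\mathcal{B}(y,[x,z]_{i})-\mathcal{B}(x,[y,z]_{i})$ is in fact the consistent one).
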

\begin{proof}
Define a linear map $T:\g \rightarrow \g^{*}$ by
\[
\langle T(x),y\rangle=\mathcal{B}(x,y), \quad \forall \ x,y\in\g.
\]
Then $T$ is invertible by the nondegeneracy of $\mathcal{B}$. Moreover, for any $a^{*}, b^{*}\in \g^{*}$, there exist $x,y\in \g$ such that $a^{*}=T(x), b^{*}=T(y)$. Then for any $z\in \g$, we have
\[
\begin{split}
\mathcal{B}([x,y]_{\g},z)=&\langle T([x,y]_{\g}),z\rangle \\
=&\langle T([T^{-1}(a^{*}),T^{-1}(b^{*})]_{\g}),z\rangle
\end{split}
\]
\[
\begin{split}
\mathcal{B}([y,z]_{\g},x)=&\mathcal{B}(x,[y,z]_{\g})\\
=&\langle T(x),k_1[y,z]_{\g1}+k_2[y,z]_{2}\rangle \\
=&k_1\langle T(x),\mathrm{ad}_{1}(y)(z)\rangle+k_2\langle T(x),\mathrm{ad}_{2}(y)(z)\rangle\\
=&-k_1\langle \mathrm{ad}^{*}_{1}(y)T(x), z\rangle-k_2\langle \mathrm{ad}^{*}_{2}(y)T(x), z\rangle,
\end{split}
\]
\[
\begin{split}
\mathcal{B}([z,x]_{\g},y)=&\mathcal{B}(y,[z,x]_{\g})\\
=&\langle T(y),k_1[z,x]_{\g1}+k_2[z,x]_{2}\rangle \\
=&-k_1\langle T(y),\mathrm{ad}_{1}(x)(z)\rangle-k_2\langle T(y),\mathrm{ad}_{2}(x)(z)\rangle\\
=&k_1\langle \mathrm{ad}^{*}_{1}(x)T(y), z\rangle+k_2\langle \mathrm{ad}^{*}_{2}(x)T(y), z\rangle.
\end{split}
\]
Since $\mathcal{B}$ is a commutative $2$-cocycle, we have
\[
\begin{split}
[T^{-1}(a^{*}),T^{-1}(b^{*})]_{\g}=&T^{-1}(k_1 \mathrm{ad}^{*}_{1}(x)T(y)+k_2\mathrm{ad}^{*}_{2}(x)T(y)-k_1 \mathrm{ad}^{*}_{1}(y)T(x)-k_2\mathrm{ad}^{*}_{2}(y)T(x))\\
=&T^{-1}((k_1 \mathrm{ad}^{*}_{1}+k_2\mathrm{ad}^{*}_{2})(x)T(y)-(k_1 \mathrm{ad}^{*}_{1}+k_2\mathrm{ad}^{*}_{2})(y)T(x))
\end{split}
\]
Thus $T^{-1}$ is an anti-$\mathcal{O}$-operator of $(\g, [-, -]_{1}, [-,-]_{2})$ associated to $(\mathrm{ad}_{1}^{*}, \mathrm{ad}_{2}^{*}, \g^{*})$. By Corollary 2.18, there is a structure of a compatible anti-pre-Lie algebra on $(\g, [-, -]_{1}, [-,-]_{2})$ given by
\[
x\circ y=-T^{-1}(\mathrm{ad}_{1}^{*}(x)T(y)), \quad  x\ast y=-T^{-1}(\mathrm{ad}_{2}^{*}(x)T(y)), \quad \forall \ x,y\in \g,
\]
which gives
\[
\mathcal{B}(x\circ y, z) =\langle T(x \circ y), z\rangle = -\langle \mathrm{ad}_{1}^{*}(x)T(y), z\rangle = \langle T(y), [x, z]\rangle = \mathcal{B}(y, [x, z]_{1}), \quad \forall x, y, z \in \g.
\]
\[
\mathcal{B}(x\ast y, z) =\langle T(x \ast y), z\rangle = -\langle \mathrm{ad}_{2}^{*}(x)T(y), z\rangle = \langle T(y), [x, z]\rangle = \mathcal{B}(y, [x, z]_{2}), \quad \forall x, y, z \in \g.
\]
Furthermore, we have
\[
\mathcal{B}(k_1x\circ y+k_2x\ast y, z) =\mathcal{B}(y, k_1[x, z]_{1}+k_2[x,z]_{2}), \quad \forall x, y, z \in \g.
\]
Hence, the conclusion holds.
\end{proof}

\begin{defn}
A bilinear form $\mathcal{B}$ on an anti-pre-Lie algebra $(A,\circ)$ is called invariant if 
\begin{equation}\label{invariant $2$-cocycle}
\mathcal{B}(x\circ y,z)=\mathcal{B}(y,[x,z]), \quad \forall \ x,y,z\in\g  
\end{equation}
holds.
\end{defn}
Note that according to the definition of a compatible anti-pre-Lie algebra, if a bilinear form $\mathcal{B}$ is invariant on $(A,\circ,\ast)$, then it is also invariant on $(A,\circ)$ and $(A, \ast)$.
\begin{cor}\label{cor:corres}
Any symmetric invariant bilinear form on a compatible anti-pre-Lie algebra
$(A,\circ, \ast)$ is a commutative $2$-cocycle on the sub-adjacent compatible Lie
algebra $(\g(A),[-,-]_{1}, [-,-]_{2})$. Conversely, a nondegenerate
commutative $2$-cocycle on a compatible Lie algebra $(\g,[-,-]_{1}, [-,-]_{2})$ is
invariant on a compatible anti-pre-Lie algebra given by
\begin{equation}\label{eq:thm:commutative $2$-cocycles and anti-pre-Lie
algebras}
\mathcal{B}(x\circ y,z)=\mathcal{B}(y,[x,z]_{1}), \quad \mathcal{B}(x\ast y,z)=\mathcal{B}(y,[x,z]_{2}) \quad \forall \ x,y,z\in\g    
\end{equation}
\end{cor}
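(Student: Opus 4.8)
The plan is to reduce the compatible statement to the corresponding fact for a single anti-pre-Lie algebra together with Theorem~\ref{thm:commutative $2$-cocycles and anti-pre-Lie algebras}, handling the two directions separately.

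For the forward direction, I would begin with a symmetric invariant bilinear form $\mathcal{B}$ on $(A,\circ,\ast)$ and use the remark following \eqref{invariant $2$-cocycle} to deduce that $\mathcal{B}$ is separately invariant on the component algebras $(A,\circ)$ and $(A,\ast)$, so that $\mathcal{B}(x\circ y,z)=\mathcal{B}(y,[x,z]_1)$ and $\mathcal{B}(x\ast y,z)=\mathcal{B}(y,[x,z]_2)$. The one genuine computation is then the single-bracket identity: writing $[x,y]_1=x\circ y-y\circ x$ and applying invariance together with symmetry yields $\mathcal{B}([x,y]_1,z)=\mathcal{B}(y,[x,z]_1)-\mathcal{B}(x,[y,z]_1)=\mathcal{B}([x,z]_1,y)-\mathcal{B}([y,z]_1,x)$, and since $\mathcal{B}([x,z]_1,y)=-\mathcal{B}([z,x]_1,y)$ this rearranges exactly into the commutative $2$-cocycle identity for $[-,-]_1$; the identical manipulation with $\ast$ gives the identity for $[-,-]_2$.

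To finish the forward direction I would observe that a commutative $2$-cocycle on the compatible Lie algebra $(\g(A),[-,-]_1,[-,-]_2)$ means, by definition, a commutative $2$-cocycle for every bracket $k_1[-,-]_1+k_2[-,-]_2$, and that by bilinearity of $\mathcal{B}$ this is equivalent to the cocycle identity holding for $[-,-]_1$ and $[-,-]_2$ individually, which is precisely what the previous step established. For the converse I would take a nondegenerate commutative $2$-cocycle $\mathcal{B}$ on $(\g,[-,-]_1,[-,-]_2)$ and apply Theorem~\ref{thm:commutative $2$-cocycles and anti-pre-Lie algebras} to produce the compatible anti-pre-Lie structure determined by \eqref{eq:thm:commutative $2$-cocycles and anti-pre-Lie algebras}; since those defining relations are literally the invariance conditions of $\mathcal{B}$ for $\circ$ against $[-,-]_1$ and for $\ast$ against $[-,-]_2$, invariance on the resulting compatible anti-pre-Lie algebra is immediate.

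I do not expect any real obstacle: the only substantive content is the short symmetry-and-invariance manipulation that produces the cocycle identity for one bracket, while the converse is a direct reading of the cited theorem. The main care required is purely bookkeeping, namely carrying both brackets through at once and matching the compatible notions of \emph{commutative $2$-cocycle} and \emph{invariant form} to their single-bracket versions.
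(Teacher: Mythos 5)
Your proposal is correct and takes essentially the same route as the paper: invariance on $(A,\circ,\ast)$ restricts to the component algebras, the invariance-plus-symmetry manipulation produces the cyclic commutative $2$-cocycle identity, and the converse is read off directly from the preceding theorem, whose defining relations are the invariance conditions. The only cosmetic difference is that the paper runs the computation once with the combined bracket $[x,y]_{\g}=k_1[x,y]_{1}+k_2[x,y]_{2}$, whereas you treat $[-,-]_{1}$ and $[-,-]_{2}$ separately and recombine by bilinearity of $\mathcal{B}$ in the bracket argument — the same argument organized differently.
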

\begin{proof}
For the first half part, by \eqref{invariant $2$-cocycle}, we have
\begin{equation*}
\begin{split}
\mathcal{B}([x,y]_{\g},z)=&\mathcal{B}(k_1[x,y]_{1}+k_2[x,y]_{2},z)\\
=&k_1\mathcal{B}([x,y]_{1})+k_2\mathcal{B}([x,y]_{2},z)\\
=&k_1(\mathcal{B}(x\circ y,z)-\mathcal{B}(y\circ x,z))+k_2(\mathcal{B}(x\ast y,z)-\mathcal{B}(y\ast x,z))\\
=&k_1(\mathcal{B}(y,[x,z]_{1})-\mathcal{B}(x,[y,z]_{1}))+k_2(\mathcal{B}(y,[x,z]_{2})-\mathcal{B}(x,[y,z]_{2}))\\
=&\mathcal{B}(y,k_1[x,z]_{1}+k_2[x,z]_{2})-\mathcal{B}(x,k_1[y,z]_{1}+[y,z]_{2})\\
=&\mathcal{B}(y,[x,z]_{\g})-\mathcal{B}(x,[y,z]_{\g})
\end{split}
\end{equation*}

Thus $\mathcal{B}$ is a commutative $2$-cocycle on the sub-adjacent compatible
Lie algebra $(\g(A),[-,-]_{1}, [-,-]_{2})$. The second half part follows
from Theorem~\ref{thm:commutative $2$-cocycles and anti-pre-Lie
algebras}.
\end{proof}

Recall that two representations $(\rho_{1},V_{1})$ and
$(\rho_{2},V_{2})$ of a Lie algebra $(\g,[-,-])$ are called
\textbf{equivalent} if there is a linear isomorphism
$\varphi:V_{1}\rightarrow V_{2}$ such that
$\varphi(\rho_{1}(x)v)=\rho_{2}(x)\varphi(v), \forall x\in \g, v\in V_{1}$.

\begin{prop}\label{pro:equivalence and invariance}
Let $(A,\circ, \ast)$ be a compatible anti-pre-Lie algebra. Then there is a
nondegenerate invariant bilinear form on $(A,\circ, \ast)$ if and only
if $(-\mathcal{L}_{\circ}, -\mathcal{L}_{\ast}, A)$ and $(\mathrm{ad}_{1}^{*}, \mathrm{ad}_{2}^{*}, A^{*})$ are
equivalent as representations of the sub-adjacent compatible Lie algebra
$(\g(A),[-,-]_{1}, [-,-]_{2})$.
\end{prop}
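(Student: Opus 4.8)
The plan is to establish the equivalence of representations by exhibiting the concrete linear isomorphism that a nondegenerate invariant bilinear form naturally provides, and to verify the intertwining conditions against the characterization of a representation of a compatible Lie algebra in Proposition~\ref{representation}. The key observation is that a bilinear form $\mathcal{B}$ on $A$ determines a linear map $T : A \to A^{*}$ by $\langle T(x), y\rangle = \mathcal{B}(x,y)$, and nondegeneracy of $\mathcal{B}$ is exactly invertibility of $T$; symmetry of $\mathcal{B}$ makes $T$ its own transpose. This $T$ is the candidate intertwiner $\varphi$ between $(-\mathcal{L}_{\circ}, -\mathcal{L}_{\ast}, A)$ and $(\mathrm{ad}_{1}^{*}, \mathrm{ad}_{2}^{*}, A^{*})$.

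\emph{Forward direction.} First I would assume $\mathcal{B}$ is a nondegenerate invariant bilinear form and show that $T$ intertwines the two representations. By the definition of equivalence applied to a representation of a compatible Lie algebra, I must verify both $T(-\mathcal{L}_{\circ}(x)v) = \mathrm{ad}_{1}^{*}(x)T(v)$ and $T(-\mathcal{L}_{\ast}(x)v) = \mathrm{ad}_{2}^{*}(x)T(v)$ for all $x,v \in A$. Each of these is a statement of equality in $A^{*}$, so I would test it by pairing against an arbitrary $z \in A$. For the first, the left side pairs to $-\mathcal{B}(x\circ v, z)$, while the right side pairs to $-\langle T(v), \mathrm{ad}_{1}(x)z\rangle = -\mathcal{B}(v, [x,z]_{1})$; these agree precisely by the invariance identity \eqref{invariant $2$-cocycle} applied to the operation $\circ$ (using the remark that invariance on $(A,\circ,\ast)$ descends to each of $(A,\circ)$ and $(A,\ast)$). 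The same computation with $\ast$ and $\mathrm{ad}_{2}^{*}$ gives the second intertwining relation. Since $T$ is invertible, this exhibits the required equivalence.

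\emph{Converse direction.} Conversely, assume $\varphi : A \to A^{*}$ is a linear isomorphism with $\varphi(-\mathcal{L}_{\circ}(x)v) = \mathrm{ad}_{1}^{*}(x)\varphi(v)$ and $\varphi(-\mathcal{L}_{\ast}(x)v) = \mathrm{ad}_{2}^{*}(x)\varphi(v)$. I would define a bilinear form by $\mathcal{B}(x,y) = \langle \varphi(x), y\rangle$; nondegeneracy is immediate from $\varphi$ being an isomorphism, and reversing the pairing computation above recovers the invariance identity \eqref{invariant $2$-cocycle} for both $\circ$ and $\ast$, hence on $(A,\circ,\ast)$. The one genuine subtlety, and the step I expect to be the main obstacle, is \emph{symmetry} of $\mathcal{B}$: equivalence alone guarantees $\varphi$ is an intertwiner but not that $\varphi$ equals its own transpose. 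To force $\mathcal{B}(x,y)=\mathcal{B}(y,x)$ I would symmetrize---replace $\varphi$ by $\tfrac{1}{2}(\varphi + \varphi^{*})$, where $\varphi^{*}:A \to A^{*}$ is the transpose---and check that the symmetrized map still intertwines the representations and remains nondegenerate. Verifying that $\varphi^{*}$ is itself an intertwiner uses that $(\mathrm{ad}_{1}^{*}, \mathrm{ad}_{2}^{*}, A^{*})$ is the dual of $(\mathrm{ad}_{1}, \mathrm{ad}_{2}, A)$ together with the Lie-admissibility relating $\mathcal{L}_{\circ}, \mathcal{L}_{\ast}$ to the adjoint actions; this bookkeeping, rather than any deep idea, is where the care is needed.
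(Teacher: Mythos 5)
Your two main computations are correct and are essentially the paper's proof. In the direction from a nondegenerate invariant form to an equivalence, the paper quotes Proposition 2.24 of \cite{LB} for each of $(A,\circ)$ and $(A,\ast)$ separately and then assembles the compatible statement; your direct verification by pairing against an arbitrary $z$ is exactly the content of that citation. In the other direction, the paper, like you, defines $\mathcal{B}(x,y)=\langle\varphi(x),y\rangle$ and checks invariance by reversing the pairing computation (working with $k_1\mathcal{L}_\circ+k_2\mathcal{L}_\ast$ and $k_1\mathrm{ad}_1^*+k_2\mathrm{ad}_2^*$ at once, which is equivalent to your two separate intertwining identities).

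The one place you go astray is the closing ``subtlety.'' In this paper an invariant bilinear form is defined solely by the identity \eqref{invariant $2$-cocycle}; symmetry is \emph{not} part of the definition --- it is an extra hypothesis imposed separately where needed (note that Corollary~\ref{cor:corres} says ``symmetric invariant bilinear form'' explicitly, and the remark following the proposition confirms that from an equivalence one obtains only a nondegenerate form satisfying the cyclic identity $\mathcal{B}([x,y]_{\g},z)+\mathcal{B}(y,[z,x]_{\g})+\mathcal{B}(x,[y,z]_{\g})=0$, with no symmetry claimed). So your symmetrization step is unnecessary. It is also unsound as written: the transpose $\varphi^{*}$ intertwines $(-\mathcal{L}_{\circ},-\mathcal{L}_{\ast},A)$ with $(\mathrm{ad}_{1}^{*},\mathrm{ad}_{2}^{*},A^{*})$ precisely when the transposed form $\mathcal{B}^{t}(x,y)=\mathcal{B}(y,x)$ is itself invariant, which does not follow from invariance of $\mathcal{B}$, and even granting it, $\tfrac{1}{2}(\varphi+\varphi^{*})$ can fail to be nondegenerate (it vanishes identically when $\mathcal{B}$ is skew-symmetric). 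Deleting that final paragraph leaves a complete and correct proof matching the paper's.
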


\begin{proof}
Suppose  $\varphi:A\rightarrow A^{*}$ is a linear isomorphism
satisfying
$$\varphi (-(k_1\mathcal L_\circ(x)+k_2\mathcal L_\circ(x))y)=(k_1{\rm
ad}_{1}^*(x)+k_2{\rm
ad}_{2}^*(x))\varphi(y),\;\;\forall x,y\in A.$$ Define a nondegenerate
bilinear form $\mathcal{B}$ on $A$ by
\begin{equation}\label{eq:pro:equivalence and invariance}
\mathcal{B}(x,y)=\langle \varphi(x),y\rangle,\;\; \forall x,y\in
A.
\end{equation}
Then we have
\[
\begin{split}
\mathcal{B}(k_1x\circ y+k_2x\ast y, z)=&\langle\varphi (-(k_1\mathcal L_\circ(x)+k_2\mathcal L_\circ(x))y), z\rangle\\
=&-\langle(k_1{\rm
ad}_{1}^*(x)+k_2{\rm
ad}_{2}^*(x))\varphi(y), z\rangle\\
=&-k_1\langle{\rm
ad}_{1}^*(x)\varphi(y), z\rangle-k_2\langle{\rm
ad}_{2}^*(x)\varphi(y), z\rangle\\
=&k_1\langle\varphi(y),[x,z]_{1}\rangle+k_2\langle\varphi(y),[x,z]_{2}\rangle\\
=&\langle\varphi(y),k_1[x,z]_{1}+k_2[x,z]_{2}\rangle\\
=&\langle\varphi(y),[x,z]_{\g}\rangle\\
=&\mathcal{B}(y,[x,z]_{\g}),\;\;\forall x,y,z\in A.    
\end{split}
\]
Thus $\mathcal{B}$ is invariant on $(A,\circ, \ast)$.

Conversely, suppose that $\mathcal{B}$ is a nondegenerate
invariant bilinear form on $(A,\circ, \ast)$. So is $\mathcal{B}$  on $(A,\circ)$ and $(A,\ast)$. Then by Proposition 2.24 in \cite{LB}, $(-\mathcal{L}_{\circ}, A)$  and $({\rm
ad}_{1}^*, A)$ are equivalent. So are $(-\mathcal{L}_{\ast}, A)$  and $({\rm
ad}_{2}^*, A)$. Define a linear map
$\varphi:A\rightarrow A^{*}$ by Eq.~(\ref{eq:pro:equivalence and
invariance}). Then by a similar proof as above, we  show that
$\varphi$ gives an equivalence between $(-\mathcal{L}_{\circ},-\mathcal{L}_{\ast},A)$
and $(\mathrm{ad}_{1}^{*}, \mathrm{ad}_{2}^{*},A^{*})$ as representations of $(\g(A),[-,-]_{1}, [-,-]_{2})$.
\end{proof}

From Proposition~\ref{pro:equivalence and invariance}, Corollary~\ref{cor:corres} and their proofs, we can conclude that for a compatible Lie algebra $(\g,[-,-]_{1}, [-,-]_{2})$, if there is a nondegenerate commutative $2$-cocycle
on $(\g,[-,-]_{1}, [-,-]_{2})$, then there is a compatible anti-pre-Lie algebra $(\g,\circ, \ast)$ given by
Eq.~(\ref{eq:thm:commutative $2$-cocycles and anti-pre-Lie
algebras}). Moreover, $(-\mathcal{L}_{\circ}, -\mathcal{L}_{\ast}, \g)$ and $(\mathrm{ad}_{1}^{*},\mathrm{ad}_{2}^{*},\mathfrak{g}^{*})$ are
equivalent as representations of $(\g,[-,-]_{1}, [-,-]_{2})$. Conversely, if $(\g,\circ, \ast)$ is a  compatible anti-pre-Lie algebra  such that
$(-\mathcal{L}_{\circ}, -\mathcal{L}_{\ast}, \mathfrak{g})$ and $(\mathrm{ad}_{1}^{*}, \mathrm{ad}_{2}^{*},\mathfrak{g}^{*})$ are
equivalent as representations of $(\g,[-,-]_{1}, [-,-]_{2})$, then there is a nondegenerate bilinear form $\mathcal B$ satisfying
\begin{equation*}
\mathcal B([x,y]_{\g},z)+\mathcal B(y,[z,x]_{\g})+\mathcal B(x,[y,z]_{\g})=0,\;\;\forall x,y,z\in \g.
\end{equation*}

\

Let $(\rho, \mu, V)$ be a representation of a compatible Lie algebra $(\g,[-,-]_{1}, [-,-]_{2})$. Then there is a Lie algebra structure on  the direct
sum $\g\oplus V$ of vector spaces which is called the {\bf semi-direct
product}  (see \cite{WB}) defined by
\[
\begin{split}
&[x+u,y+v]_{1}=[x,y]_{1}+\rho(x)v-\rho(y)u,\\
&[x+u,y+v]_{2}=[x,y]_{2}+\mu(x)v-\mu(y)u,\;\;\forall x,y\in \g,
u,v\in V.   
\end{split}
\]
It is denoted by $\g\ltimes_{\rho, \mu}V$. Furthermore, there is the
following construction of
nondegenerate commutative $2$-cocycles from compatible anti-pre-Lie algebras.

\begin{prop}
Let $(A,\circ, \ast)$ be a compatible anti-pre-Lie algebra and $(A,[-,-]_{1}, [-,-]_{2})$ be the sub-adjacent compatible Lie algebra. Define a bilinear
form $\mathcal{B}$ on $A\oplus A^{*}$ by
\begin{equation}\label{eq:pro:commutative $2$-cocycles on semi-direct Lie algebras1}
\mathcal{B}(x+a^{*},y+b^{*})=\langle x,b^{*}\rangle+\langle
a^{*},y\rangle, \forall x,y\in A, a^{*},b^{*}\in A^{*}.
\end{equation}
Then $\mathcal{B}$ is a nondegenerate commutative $2$-cocycle on the
Lie algebra $A\ltimes_{-\mathcal{L}^{*}_{\circ}, -\mathcal{L}^{*}_{\ast}}A^*$.
Conversely, let $(\g,[-,-]_{1},[-,-]_{2})$ be a compatible Lie algebra and
$(\rho, \mu,\g^*)$ be a representation. Suppose that the bilinear
form given by Eq.~(\ref{eq:pro:commutative $2$-cocycles on
semi-direct Lie algebras1}) is a commutative $2$-cocycle on $\g\ltimes_{\rho, \mu}\g^{*}$. Then there is a structure of a  compatible
anti-pre-Lie algebra $(\g,\circ, \ast)$ on $(\g,[-,-]_{1}, [-,-]_{2})$ such that $\rho=-\mathcal{L}^{*}_{\circ}, \mu=-\mathcal{L}^{*}_{\ast}$.
\end{prop}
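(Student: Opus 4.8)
The statement is a biconditional, and the plan is to reduce each direction to results already in hand. For the forward direction I would start from the fact, proven above, that a compatible anti-pre-Lie algebra $(A,\circ,\ast)$ makes $(-\mathcal{L}_{\circ},-\mathcal{L}_{\ast},A)$ a representation of its sub-adjacent compatible Lie algebra $(A,[-,-]_1,[-,-]_2)$. Dualizing via the dual-representation proposition, $(-\mathcal{L}^{*}_{\circ},-\mathcal{L}^{*}_{\ast},A^{*})$ is again a representation, so the semi-direct product $A\ltimes_{-\mathcal{L}^{*}_{\circ},-\mathcal{L}^{*}_{\ast}}A^{*}$ really is a compatible Lie algebra; this is the one place where the compatible anti-pre-Lie hypothesis is genuinely used. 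Symmetry and nondegeneracy of $\mathcal{B}$ from \eqref{eq:pro:commutative $2$-cocycles on semi-direct Lie algebras1} are immediate from nondegeneracy of the pairing $\langle-,-\rangle$.

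For the cocycle condition I would exploit that the commutative $2$-cocycle identity is linear in the bracket, so being a cocycle for $k_1[-,-]_1+k_2[-,-]_2$ for all $k_1,k_2$ is equivalent to being a cocycle for $[-,-]_1$ and for $[-,-]_2$ separately. I would then verify each by splitting the arguments $X=x+a^{*}$, $Y=y+b^{*}$, $Z=z+c^{*}$ according to how many lie in $A$ versus $A^{*}$. The cases with all three in $A$, or with at least two in $A^{*}$, collapse to $0$ because $\mathcal{B}$ pairs $A$ only against $A^{*}$ and $A^{*}$ is an abelian ideal; the only substantive case, two arguments in $A$ and one in $A^{*}$, reduces after applying $\langle -\mathcal{L}^{*}_{\circ}(x)b^{*},z\rangle=\langle b^{*},x\circ z\rangle$ to the commutator identity $[x,y]_1=x\circ y-y\circ x$ of \eqref{commutators}, which holds by definition. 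The identical computation with $\ast$ disposes of $[-,-]_2$.

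For the converse, given $(\g,[-,-]_1,[-,-]_2)$, the representation $(\rho,\mu,\g^{*})$, and the cocycle $\mathcal{B}$, I would define $\circ,\ast$ by duality: let $x\circ y\in\g$ be the unique element with $\langle a^{*},x\circ y\rangle=\langle\rho(x)a^{*},y\rangle$ for all $a^{*}\in\g^{*}$, and similarly $x\ast y$ from $\mu$; nondegeneracy of $\langle-,-\rangle$ makes these well defined and forces $\rho=-\mathcal{L}^{*}_{\circ}$, $\mu=-\mathcal{L}^{*}_{\ast}$. The same two-in-$\g$, one-in-$\g^{*}$ instance of the cocycle identity now forces the commutators of $\circ$ and $\ast$ to equal $[-,-]_1$ and $[-,-]_2$, so $(\g,\circ,\ast)$ is compatible Lie-admissible. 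Since $(\rho,\mu,\g^{*})$ is a representation, its dual $(-\mathcal{L}_{\circ},-\mathcal{L}_{\ast},\g)$ is a representation of $(\g,[-,-]_1,[-,-]_2)$; specializing to $k_1=1,k_2=0$ and $k_1=0,k_2=1$ and invoking Proposition 2.2 of \cite{LB} (an anti-pre-Lie algebra is exactly a Lie-admissible algebra whose negative left multiplication is a representation) shows that $(\g,\circ)$ and $(\g,\ast)$ are separately anti-pre-Lie. I would then apply the Proposition characterizing compatible anti-pre-Lie algebras as compatible Lie-admissible algebras carrying the representation $(-\mathcal{L}_{\circ},-\mathcal{L}_{\ast},\g)$ to conclude that $(\g,\circ,\ast)$ is compatible anti-pre-Lie, with $\rho=-\mathcal{L}^{*}_{\circ}$, $\mu=-\mathcal{L}^{*}_{\ast}$ by construction.

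The hard part will be the compatibility in the converse rather than the individual anti-pre-Lie axioms. A subtle point worth flagging is that the cocycle identity is completely blind to \eqref{anti id2}: the all-$\g$ case is the trivial $0=0$, so the second anti-pre-Lie identity is recovered only from the Jacobi identity of the commutator together with \eqref{anti id1}, using $\mathrm{char}\,\mathbb{C}\neq2$ (this is precisely what Proposition 2.2 of \cite{LB} packages). The genuinely compatible content, \eqref{compatible anti-pre id1} and \eqref{compatible anti-pre id2}, is forced not by either single bracket but by the cross relation \eqref{rho-mu} of the pair $(\rho,\mu)$ and by \eqref{compatible lie algebra}; capturing exactly this is why I route the last step through the compatible Lie-admissible characterization instead of checking the compatibility identities by hand. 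Careful sign bookkeeping in the identifications $\rho=-\mathcal{L}^{*}_{\circ}$ and $(-\mathcal{L}^{*}_{\circ})^{*}=-\mathcal{L}_{\circ}$ will be the other place to stay cautious.
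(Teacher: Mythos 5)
Your proposal is correct, and it splits into two halves of different character relative to the paper. The forward direction is essentially the paper's argument: the paper cites [LB, Prop.~2.26] for the two single-bracket semidirect products $A\ltimes_{-\mathcal{L}^{*}_{\circ}}A^{*}$ and $A\ltimes_{-\mathcal{L}^{*}_{\ast}}A^{*}$ and then expands the cyclic sum for a general $k_1,k_2$-combination, where the terms regroup against $a^{*},b^{*},c^{*}$ and cancel by the commutator definitions \eqref{commutators} --- exactly your ``only the mixed case is substantive, and it reduces to the commutator identity'' observation, with your linearity-in-the-bracket reduction being a slightly cleaner packaging of the same computation. The converse, however, is where you genuinely diverge: the paper invokes Theorem~\ref{thm:commutative $2$-cocycles and anti-pre-Lie algebras} on the compatible Lie algebra $\g\ltimes_{\rho,\mu}\g^{*}$ equipped with the nondegenerate cocycle $\mathcal{B}$, obtaining a compatible anti-pre-Lie structure on all of $\g\oplus\g^{*}$, then shows $\g$ is closed under it via $\mathcal{B}(x\circ y,z)=\mathcal{B}(y,[x,z]_{1})=0$ (which kills the $\g^{*}$-component of $x\circ y$) and reads off $\rho=-\mathcal{L}^{*}_{\circ}$, $\mu=-\mathcal{L}^{*}_{\ast}$ by the pairing computation you also perform; you instead define $\circ$ and $\ast$ on $\g$ directly by dualizing $\rho$ and $\mu$, use the mixed instance of the cocycle identity to force the commutators to be $[-,-]_{1}$ and $[-,-]_{2}$, and then route through the dual-representation proposition, [LB, Prop.~2.2] at $(k_1,k_2)=(1,0),(0,1)$, and the paper's characterization of compatible anti-pre-Lie algebras as compatible Lie-admissible algebras with representation $(-\mathcal{L}_{\circ},-\mathcal{L}_{\ast},A)$. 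The paper's route is shorter because Theorem~\ref{thm:commutative $2$-cocycles and anti-pre-Lie algebras} (and the anti-$\mathcal{O}$-operator machinery behind it) is already available, and it delivers the restriction-to-$\g$ statement essentially for free; your route is more self-contained, bypasses the anti-$\mathcal{O}$-operator apparatus entirely, and makes transparent that the cocycle hypothesis on $\g\ltimes_{\rho,\mu}\g^{*}$ is used \emph{only} through the two-in-$\g$, one-in-$\g^{*}$ case (the other cases being trivially zero since $\g^{*}$ is an abelian ideal and $\mathcal{B}$ pairs $\g$ only with $\g^{*}$) --- at the cost of the sign bookkeeping in $\rho^{*}=-\mathcal{L}_{\circ}$ and $\rho=-\mathcal{L}^{*}_{\circ}$, which you correctly flagged and which does check out under the paper's convention $\langle\rho^{*}(x)a^{*},b\rangle=-\langle a^{*},\rho(x)b\rangle$.
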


\begin{proof}
By Proposition 2.26 in \cite{LB} $\mathcal{B}$ is a nondegenerate commutative $2$-cocycle on the Lie algebras $A\ltimes_{-\mathcal{L}^{*}_{\circ}}A^*$ and $A\ltimes_{ -\mathcal{L}^{*}_{\ast}}A^*$. Let $x,y,z\in
A, a^{*},b^{*},c^{*}\in A^{*}$. Then
we have
\[
\begin{split}
\mathcal{B}([x+a^{*},y+b^{*}]_A,z+c^{*})=&\mathcal{B}([x,y]_{A}-k_1\mathcal{L}^{*}_{\circ}(x)b^{*}+k_1\mathcal{L}^{*}_{\circ}(y)a^{*}-k_2\mathcal{L}^{*}_{\ast}(x)b^{*}+k_2\mathcal{L}^{*}_{\ast}(y)a^{*},z+c^{*})\\
=&k_1\langle [x,y]_{1},c^{*}\rangle-\langle k_1\mathcal{L}^{*}_{\circ}(x)b^{*},z\rangle+\langle k_1\mathcal{L}^{*}_{\circ}(y)a^{*},z\rangle\\
&+k_2\langle [x,y]_{2},c^{*}\rangle-\langle k_2\mathcal{L}^{*}_{\ast}(x)b^{*},z\rangle+\langle k_2\mathcal{L}^{*}_{\ast}(y)a^{*},z\rangle\\
=&k_1(\langle[x,y]_A,c^{*}\rangle+\langle b^{*},x\circ z\rangle-\langle
a^{*},y\circ z\rangle)\\
&+k_2(\langle[x,y]_2,c^{*}\rangle+\langle b^{*},x\ast z\rangle-\langle
a^{*},y\ast z\rangle)
\end{split}
\]
Similarly,
\begin{equation*}
\begin{split}
\mathcal{B}([y+b^{*},z+c^{*}]_A,x+a^{*})=&k_1(\langle
[y,z]_1,a^{*}\rangle+\langle c^{*},y\circ x\rangle-\langle
b^{*},z\circ x\rangle)\\
&+k_2(\langle
[y,z]_2,a^{*}\rangle+\langle c^{*},y\ast x\rangle-\langle
b^{*},z\ast x\rangle),\\
\mathcal{B}([z+c^{*},x+a^{*}]_A,y+b^{*})=&k_1(\langle[z,x]_1,b^{*}\rangle+\langle
a^{*},z\circ y\rangle-\langle c^{*},x\circ y\rangle)\\
&+k_2(\langle[z,x]_2,b^{*}\rangle+\langle
a^{*},z\ast y\rangle-\langle c^{*},x\ast y\rangle).
\end{split}
\end{equation*}
 Thus
$$\mathcal{B}([x+a^{*},y+b^{*}]_A,z+c^{*})+\mathcal{B}([y+b^{*},z+c^{*}]_A,x+a^{*})+\mathcal{B}([z+c^{*},x+a^{*}]_A,y+b^{*})=0.$$
Hence $\mathcal{B}$ is a commutative $2$-cocycle on the Lie algebra $A\ltimes_{-\mathcal{L}^{*}_{\circ}, -\mathcal{L}_{\ast}}A^{*}$.

Conversely, by Theorem~\ref{thm:commutative $2$-cocycles and anti-pre-Lie
algebras}, there is a structure of a  compatible anti-pre-Lie algebra 
given by  bilinear operations $\circ$ and $\ast$ on $\g\ltimes_{\rho}\g^{*}$ and $\g\ltimes_{\mu}\g^{*}$, repectively, defined by
Eq.~(\ref{eq:thm:commutative $2$-cocycles and anti-pre-Lie
algebras}). In particular, we have
\[
\begin{split}
\mathcal B(x\circ y,z)&=\mathcal B(y,[x,z]_{1})=0,\\
\mathcal B(x\ast y,z)
&=\mathcal B(y,[x,z]_{2})=0,\;\;\forall
x,y,z\in \g.
\end{split}
\]
So for all $x,y\in \g$, $x\circ y, x\ast y \in
\g$ and thus $(\g,\circ, \ast)$ is a compatible anti-pre-Lie
algebra. Moreover,
\begin{equation*}
\langle y, -\mathcal L^*_\circ(x) a^*\rangle=\langle x\circ y,
a^{*}\rangle=\mathcal B(x\circ y, a^*)=\mathcal
B(y,[x,a^*]_{1})=\langle y,\rho(x)a^{*}\rangle, \;\; \forall x,y\in
\g, a^{*}\in \g^{*}.
\end{equation*}
Hence $\rho=-\mathcal{L}^{*}_{\circ}$.

Similarly,
\begin{equation*}
\langle y, -\mathcal L^*_\ast(x) a^*\rangle=\langle x\ast y,
a^{*}\rangle=\mathcal B(x\ast y, a^*)=\mathcal
B(y,[x,a^*]_{2})=\langle y,\mu(x)a^{*}\rangle, \;\; \forall x,y\in
\g, a^{*}\in \g^{*}.
\end{equation*}
Hence $\mu=-\mathcal{L}^{*}_{\ast}$.
\end{proof}

At the end of this section, we consider a construction of compatible
anti-pre-Lie algebras from symmetric bilinear forms, where in
particular these bilinear forms are invariant.

\begin{prop}
Let $\mathcal{B}$ be a symmetric bilinear form on a vector space
$A$ and $s_1, s_2\in A$ be  fixed vectors. Define  bilinear operations
$\circ, \ast:A\otimes A\rightarrow A$ by
\begin{equation*}
\begin{split}
x\circ y&=\mathcal{B}(x,y)s_1-\mathcal{B}(x,s_1)y,\\
x\ast y&=\mathcal{B}(x,y)s_2-\mathcal{B}(x,s_2)y,\;\;\forall x,y\in A.
\end{split}
\end{equation*}
 Then $(A,\circ, \ast)$ is a compatible anti-pre-Lie
algebra. Moreover, $\mathcal{B}$ is invariant on $(A,\circ, \ast)$ and
thus $\mathcal{B}$ is a commutative $2$-cocycle on the sub-adjacent compatible
Lie algebra $(A,[-,-]_{1}, [-,-]_{2})$.
\end{prop}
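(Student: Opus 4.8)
The plan is to reduce everything to the single-operation case by observing that this construction is closed under forming pencils of the two products. Fix $k_1,k_2\in\mathbb C$ and put $s=k_1s_1+k_2s_2$. Writing $x\star y=k_1(x\circ y)+k_2(x\ast y)$ and using only the bilinearity of $\mathcal B$ in its first argument, I would first verify
\[
x\star y=\mathcal B(x,y)(k_1s_1+k_2s_2)-\big(k_1\mathcal B(x,s_1)+k_2\mathcal B(x,s_2)\big)y=\mathcal B(x,y)s-\mathcal B(x,s)y .
\]
Thus the pencil product $\star$ has exactly the same shape as $\circ$ and $\ast$, now with the single fixed vector $s$. This observation is the crux of the whole argument.

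Granting the non-compatible, single-vector version of the statement from \cite{LB} --- for any symmetric bilinear form $\mathcal B$ and any fixed vector $s$, the product $\mathcal B(x,y)s-\mathcal B(x,s)y$ defines an anti-pre-Lie algebra on which $\mathcal B$ is invariant --- the displayed identity shows at once that $(A,\star)$ is an anti-pre-Lie algebra for \emph{every} choice of $k_1,k_2$, the degenerate cases $s=0$ simply giving the zero product. By the definition of a compatible anti-pre-Lie algebra this is precisely the assertion that $(A,\circ,\ast)$ is compatible, so I would not verify \eqref{compatible anti-pre id1}--\eqref{compatible anti-pre id2} of Proposition~\ref{interchange} by hand. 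Should one prefer a self-contained treatment, those two identities can instead be checked by direct substitution, where the decisive simplification is that $x\circ s_1=0$ and $x\ast s_2=0$, which collapses most of the terms.

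For the invariance claim I would compute both sides of \eqref{invariant $2$-cocycle} explicitly. Expanding gives $\mathcal B(x\circ y,z)=\mathcal B(x,y)\mathcal B(s_1,z)-\mathcal B(x,s_1)\mathcal B(y,z)$, while from $[x,z]_1=\mathcal B(z,s_1)x-\mathcal B(x,s_1)z$ one obtains $\mathcal B(y,[x,z]_1)=\mathcal B(z,s_1)\mathcal B(y,x)-\mathcal B(x,s_1)\mathcal B(y,z)$; the symmetry of $\mathcal B$ makes these equal, and the identical computation with $s_2$ settles the $\ast$ case. Hence $\mathcal B$ is a symmetric invariant bilinear form on $(A,\circ,\ast)$, and Corollary~\ref{cor:corres} then immediately upgrades this to the statement that $\mathcal B$ is a commutative $2$-cocycle on the sub-adjacent compatible Lie algebra $(A,[-,-]_1,[-,-]_2)$.

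I expect the only step requiring genuine insight to be the reduction in the first paragraph; once the pencil product is recognised as an instance of the same formula, the anti-pre-Lie axioms and the compatibility conditions come for free from the single-operation case, and everything that remains is short manipulation of a symmetric bilinear form. There is therefore no real obstacle here, only the bookkeeping of keeping the symmetry of $\mathcal B$ and the vanishing relations $x\circ s_1=x\ast s_2=0$ straight throughout.
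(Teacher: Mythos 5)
Your proof is correct, and it takes a genuinely different route from the paper's. The paper, like you, imports the single-operation result (Proposition 2.27 of \cite{LB}) to conclude that $(A,\circ)$ and $(A,\ast)$ are separately anti-pre-Lie, but it then establishes compatibility by verifying \eqref{compatible anti-pre id1} and \eqref{compatible anti-pre id2} through a direct expansion of all products in terms of $\mathcal{B}$, $s_1$, $s_2$ --- two lengthy displays (sixteen and twenty-four terms) whose cancellations must be checked by hand. Your pencil observation, that $k_1(x\circ y)+k_2(x\ast y)=\mathcal{B}(x,y)s-\mathcal{B}(x,s)y$ with $s=k_1s_1+k_2s_2$, eliminates this computation entirely: since the definition of compatibility is precisely that every pencil carries an anti-pre-Lie structure, and every pencil is again an instance of the same construction (the degenerate case $s=0$ giving the zero, hence trivially anti-pre-Lie, product), compatibility is immediate from the single-operation case, with no need to invoke Proposition~\ref{interchange} at all. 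Your argument in fact yields slightly more than the statement: since the construction is linear in the parameter $s$, any family of such products is pairwise compatible. For invariance the two proofs essentially coincide --- the paper quotes invariance of $\mathcal{B}$ on $(A,\circ)$ and $(A,\ast)$ from \cite{LB} and combines linearly, while you verify it directly via $[x,z]_1=\mathcal{B}(z,s_1)x-\mathcal{B}(x,s_1)z$ (valid by symmetry of $\mathcal{B}$); both then appeal to Corollary~\ref{cor:corres} for the $2$-cocycle conclusion. One cosmetic slip: the identity $k_1\mathcal{B}(x,s_1)+k_2\mathcal{B}(x,s_2)=\mathcal{B}(x,s)$ uses linearity of $\mathcal{B}$ in its \emph{second} argument, not the first as you wrote, but since $\mathcal{B}$ is bilinear and symmetric this is immaterial. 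What the paper's brute-force verification buys is an explicit, self-contained display of the compatibility identities; what yours buys is brevity and the conceptual point that the construction is closed under linear combinations in $s$.
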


\begin{proof}
By Proposition 2.27 in \cite{LB}, $(A, \circ)$ and $(A, \ast)$ are anti-pre-Lie algebras. Now it suffices to show \eqref{compatible anti-pre id1} and \eqref{compatible anti-pre id2}.
Let $x,y,z\in A$. Then 
we have
\begin{eqnarray*}
x\circ(y\ast z)&=&\mathcal{B}(y,z)x\circ s_2-\mathcal{B}(y,s_2)x\circ z\\
&=&\mathcal{B}(y,z)\mathcal{B}(x,s_2)s_1-\mathcal{B}(y,z)\mathcal{B}(x,s_1)s_2-\mathcal{B}(y,s_2)\mathcal{B}(x,z)s_1+\mathcal{B}(y,s_2)\mathcal{B}(x,s_1)z,\\
\mbox{}[x,y]_{2}\circ z&=&(\mathcal{B}(x,y)s_2-\mathcal{B}(x,s_2)y-\mathcal{B}(y,x)s_2+\mathcal{B}(y,s_2)x)\circ
z\\
&=&-\mathcal{B}(x,s_2)y\circ z+\mathcal{B}(y,s_2)x\circ z\\
&=&
-\mathcal{B}(x,s_2)\mathcal{B}(y,z)s_1+\mathcal{B}(x,s_2)\mathcal{B}(y,s_1)z+\mathcal{B}(y,s_2)\mathcal{B}(x,z)s_1-\mathcal{B}(y,s_2)\mathcal{B}(x,s_1)z.
\end{eqnarray*}
By changing the position of $\circ$ and $\ast$, and by swapping $x$ and $y$ we have
\[
\begin{split}
\text{LHS of \eqref{compatible anti-pre id1}}=&\mathcal{B}(y,z)\mathcal{B}(x,s_2)s_1-\mathcal{B}(y,z)\mathcal{B}(x,s_1)s_2-\mathcal{B}(y,s_2)\mathcal{B}(x,z)s_1+\mathcal{B}(y,s_2)\mathcal{B}(x,s_1)z\\
&+\mathcal{B}(y,z)\mathcal{B}(x,s_1)s_2-\mathcal{B}(y,z)\mathcal{B}(x,s_2)s_1-\mathcal{B}(y,s_1)\mathcal{B}(x,z)s_2+\mathcal{B}(y,s_1)\mathcal{B}(x,s_2)z\\
&-\mathcal{B}(x,z)\mathcal{B}(y,s_2)s_1+\mathcal{B}(x,z)\mathcal{B}(y,s_1)s_2+\mathcal{B}(x,s_2)\mathcal{B}(y,z)s_1-\mathcal{B}(x,s_2)\mathcal{B}(y,s_1)z\\
&-\mathcal{B}(x,z)\mathcal{B}(y,s_1)s_2+\mathcal{B}(x,z)\mathcal{B}(y,s_2)s_1+\mathcal{B}(x,s_1)\mathcal{B}(y,z)s_2-\mathcal{B}(x,s_1)\mathcal{B}(y,s_2)z\\
=&-\mathcal{B}(y,s_2)\mathcal{B}(x,z)s_1+\mathcal{B}(y,s_2)\mathcal{B}(x,s_1)z-\mathcal{B}(y,s_1)\mathcal{B}(x,z)s_2+\mathcal{B}(y,s_1)\mathcal{B}(x,s_2)z\\
&+\mathcal{B}(x,s_2)\mathcal{B}(y,z)s_1-\mathcal{B}(x,s_2)\mathcal{B}(y,s_1)z+\mathcal{B}(x,s_1)\mathcal{B}(y,z)s_2-\mathcal{B}(x,s_1)\mathcal{B}(y,s_2)z\\
=&\text{RHS of \eqref{compatible anti-pre id1}},
\end{split}
\]
\[
\begin{split}
\text{LHS of \eqref{compatible anti-pre id2}}=&-\mathcal{B}(x,s_2)\mathcal{B}(y,z)s_1+\mathcal{B}(x,s_2)\mathcal{B}(y,s_1)z+\mathcal{B}(y,s_2)\mathcal{B}(x,z)s_1-\mathcal{B}(y,s_2)\mathcal{B}(x,s_1)z\\
&-\mathcal{B}(x,s_1)\mathcal{B}(y,z)s_2+\mathcal{B}(x,s_1)\mathcal{B}(y,s_2)z+\mathcal{B}(y,s_1)\mathcal{B}(x,z)s_2-\mathcal{B}(y,s_1)\mathcal{B}(x,s_2)z\\
&-\mathcal{B}(y,s_2)\mathcal{B}(z,x)s_1+\mathcal{B}(y,s_2)\mathcal{B}(z,s_1)x+\mathcal{B}(z,s_2)\mathcal{B}(y,x)s_1-\mathcal{B}(z,s_2)\mathcal{B}(y,s_1)x\\
&-\mathcal{B}(y,s_1)\mathcal{B}(z,x)s_2+\mathcal{B}(y,s_1)\mathcal{B}(z,s_2)x+\mathcal{B}(z,s_1)\mathcal{B}(y,x)s_2-\mathcal{B}(z,s_1)\mathcal{B}(y,s_2)x\\
&-\mathcal{B}(z,s_2)\mathcal{B}(x,y)s_1+\mathcal{B}(z,s_2)\mathcal{B}(x,s_1)y+\mathcal{B}(x,s_2)\mathcal{B}(z,y)s_1-\mathcal{B}(x,s_2)\mathcal{B}(z,s_1)y\\
&-\mathcal{B}(z,s_1)\mathcal{B}(x,y)s_2+\mathcal{B}(z,s_1)\mathcal{B}(x,s_2)y+\mathcal{B}(x,s_1)\mathcal{B}(z,y)s_2-\mathcal{B}(x,s_1)\mathcal{B}(z,s_2)y\\
=&0.
\end{split}
\]
Hence $(A,\circ, \ast)$ is a compatible anti-pre-Lie algebra. Moreover, $\mathcal B$ is invariant on $(A,\circ)$ and $(A,\ast)$. Thus we have
\[
\begin{split}
\mathcal{B}(y,[x,z]_{\g})=&\mathcal{B}(y,k_1[x,z]_{1})+\mathcal{B}(y,k_2[x,z]_{2})\\
=&k_1\mathcal{B}(x\circ y,z)+k_2\mathcal{B}(x\ast y,z), \;\;\forall x,y,z\in A.
\end{split}
\]
Therefore $\mathcal B$ is invariant on $(A,\circ, \ast)$.
\end{proof}



    

\section{Classification of 2-dimensional compatible anti-pre-Lie algebras}

The classification of
2-dimensional complex non-commutative anti-pre-Lie algebras is given in \cite{LB} and by Proposition 2.6 in that paper, the commutative anti-pre-Lie algebras
are commutative associative algebras whose classification is
known (\cite{Bai2001} or~\cite{Burde1998}). 

Now we present the classification of anti-pre-Lie algebra of dimension two.

\begin{prop}\label{thm:classification}
Let $(A,\circ)$ be a $2$-dimensional anti-pre-Lie algebra over the complex numbers field
$\mathbb{C}$ with a basis $\{e_{1},e_{2}\}$. Then $(A,\circ)$ is isomorphic to one of the following  non-isomorphic algebras:
\end{prop}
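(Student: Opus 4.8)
The plan is to classify 2-dimensional anti-pre-Lie algebras by brute force on structure constants, exploiting the two defining identities \eqref{anti id1} and \eqref{anti id2} together with a change-of-basis group action to reduce to a finite list of normal forms. Concretely, I fix a basis $\{e_1,e_2\}$ and write $e_i\circ e_j=\sum_{k}c_{ij}^{k}e_k$, so that the product is determined by the eight scalars $c_{ij}^k$, $i,j,k\in\{1,2\}$. I would first substitute all basis triples $(e_i,e_j,e_k)$ into \eqref{anti id1} and \eqref{anti id2}; because the identities are multilinear it suffices to check them on basis elements, yielding a system of polynomial (quadratic) equations in the $c_{ij}^k$. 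Solving this system cuts the eight-dimensional parameter space down to the variety of anti-pre-Lie structures.

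Next I would quotient by isomorphism. A linear change of basis $e_i\mapsto \sum_j g_{ji}e_j$ with $g\in GL_2(\mathbb{C})$ transforms the structure constants by the standard tensorial rule, and two parameter points give isomorphic algebras exactly when they lie in the same $GL_2(\mathbb{C})$-orbit. The strategy is to use this freedom to put the left-multiplication operators $\mathcal{L}_\circ(e_1),\mathcal{L}_\circ(e_2)$ (equivalently the commutator bracket $[-,-]$, which makes $(A,[-,-])$ a Lie algebra by Lie-admissibility) into normal form. I would split into cases according to the subadjacent Lie algebra: since in dimension two every Lie algebra is either abelian or the unique nonabelian one $[e_1,e_2]=e_2$, this dichotomy organizes the casework. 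In the abelian case the algebra is commutative, hence by Proposition~2.6 of \cite{LB} it is commutative associative, and the known classification of 2-dimensional commutative associative algebras applies; in the nonabelian case I would normalize the bracket and then solve for the remaining symmetric part of the product using the constraints from \eqref{anti id1} and \eqref{anti id2}.

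The main obstacle I anticipate is the isomorphism bookkeeping: after solving the polynomial system one obtains families of structures depending on free parameters, and the delicate part is (a) rescaling and otherwise acting by the residual stabilizer of the normalized bracket to eliminate continuous parameters where possible, and (b) deciding genuine non-isomorphism between the surviving representatives, which requires producing isomorphism invariants (for example, the isomorphism type of $\g(A)$, the rank and eigenvalue data of the operators $\mathcal{L}_\circ(x)$, or distinguished invariant elements) that separate the classes. Over $\mathbb{C}$ some apparent parameter families collapse under rescaling while a few honest one-parameter families may persist, so care is needed to state the list without redundancy.

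I would then assemble the finitely many orbit representatives into the claimed list, verifying for each that the two anti-pre-Lie identities hold and exhibiting, for any two entries, an invariant that distinguishes them. Since all computations are finite and the reductions routine once the case split on $\g(A)$ is fixed, the proof is essentially an organized case analysis; the only conceptually substantive input is Lie-admissibility (which guarantees $(A,[-,-])$ is a Lie algebra and thus justifies the case split) and the reduction of the commutative case to associative algebras via \cite{LB}.
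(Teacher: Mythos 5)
Your approach is sound, but note first that the paper contains no proof of this proposition at all: immediately before the statement the author records that the classification of $2$-dimensional non-commutative anti-pre-Lie algebras is imported from \cite{LB}, while the commutative ones are, by Proposition 2.6 of \cite{LB}, precisely the commutative associative algebras, whose $2$-dimensional classification is quoted from \cite{Bai2001} and \cite{Burde1998}. So where the paper cites, you recompute. Your plan --- structure constants $c_{ij}^k$, the (correct) observation that the trilinear identities \eqref{anti id1} and \eqref{anti id2} need only be checked on basis triples, the dichotomy on the subadjacent Lie algebra $\g(A)$ (abelian versus the unique nonabelian $2$-dimensional Lie algebra), and the reduction of the abelian branch to commutative associative algebras via the same Proposition 2.6 --- is in substance the method by which the cited classification in \cite{LB} was obtained in the first place, and the dichotomy is the right organizing principle, since $\g(A)$ is abelian if and only if $\circ$ is commutative (the bracket being the commutator), which matches the split of the list into $A_1$--$A_4$ (commutative) and $A_5$--$A_9$ (noncommutative). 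What your route buys is self-containedness and independence from \cite{LB}'s classification theorem; what the paper's route buys is brevity and no duplication of a published computation. The one caveat is that yours is a plan rather than an executed proof: the quadratic system in the nonabelian branch, the orbit analysis under the stabilizer of the normalized bracket, and the invariants needed to separate the genuine one-parameter families $A_6(\lambda)$ and $A_8(\lambda)$ (e.g.\ eigenvalue data of the operators $\mathcal{L}_{\circ}(x)$, which you correctly flag) are exactly where all the work lies, and until that casework is carried through and shown to yield precisely the nine families listed, the proposal establishes the strategy but not the statement.
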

\begin{enumerate}

\item[$A_1$:] Abelian;
\item[$A_2$:] $e_1\circ e_1=e_1$;
\item[$A_3$:] $e_1\circ e_1=e_2$;
\item[$A_4$:] $e_1\circ e_1=e_1, \ e_2\circ e_2=e_2$;
\item[$A_5$:] $e_{1}\circ e_{1}=-e_{2}, e_{1}\circ
e_{2}=0, e_{2}\circ e_{1}=-e_{1}, e_{2}\circ e_{2}=0;$

\item[$A_6(\lambda)$:] ($\lambda\in\mathbb{C}$)   $e_{1}\circ
e_{1}=0, e_{1}\circ e_{2}=0, e_{2}\circ e_{1}=-e_{1}, e_{2}\circ
e_{2}=\lambda e_{2};$

\item[$A_7$:] $e_{1}\circ e_{1}=0, e_{1}\circ e_{2}=0,
e_{2}\circ e_{1}=-e_{1}, e_{2}\circ e_{2}=e_{1}-e_{2};$

\item[$A_8(\lambda)$:] ($\lambda\ne -1$)  $e_{1}\circ e_{1}=0,
e_{1}\circ e_{2}=(\lambda+1)e_{1}, e_{2}\circ e_{1}=\lambda e_{1},
e_{2}\circ e_{2}=(\lambda-1)e_{2}$;

\item[$A_9$:] $e_{1}\circ e_{1}=0, e_{1}\circ
e_{2}=-e_{1}, e_{2}\circ e_{1}=-2e_{1}, e_{2}\circ
e_{2}=e_{1}-3e_{2}.$
\end{enumerate}

Algebraic classification of nilpotent compatible Lie algebras up to dimension four is given in \cite{LLL}. The authors of \cite{AKM} gave the algebraic and geometric classsification of compatible pre-Lie algebras. In both papers, the used methods are similar. In this section,  we use that method with a little modification to classify 2-dimensional compatible anti-pre-Lie algebras.

Now, let us recall the method to classify compatible algebras.

Let $(A,\circ)$ be an algebra. Then the method consists of four steps:

\textbf{Step 1.} Compute a set $\mathrm{Z}^2(A,A)$ of all bilinear maps $\phi : A\times A \rightarrow A$ satisfying some necessary equalities. In our case they are
\begin{align*}
\text{i.}\quad&\phi(x,\phi(y,z))-\phi(y,\phi(x,z))=\phi(\phi(y,x),z)-\phi(\phi(x,y),z),\\
\text{ii.}\quad&\phi(\phi(x,y),z)-\phi(\phi(y,x),z)+cyclic=0,\\
\text{iii.}\quad&x\circ\phi(y,z)+\phi(x,y\circ z)-y\circ\phi(x,z)-\phi(y,x\circ z)=\phi(y,x)\circ z-\phi(x,y)\circ z+\phi(y\circ x,z)-\phi(x\circ y, z),\\
\text{iv.}\quad&\phi(x,y)\circ z-\phi(y,x)\circ z+\phi(x\circ y,z)-\phi(y\circ x,z)+cyclic=0
\end{align*}

\textbf{Step 2.} Since $\phi=0\in \mathrm{Z}^2(A,A)$,  $\mathrm{Z}^2(A,A) \neq \emptyset$ . For $\phi\in \mathrm{Z}^2(A,A)$,  define a multiplication $\ast_{\phi}$ on $A$ by $x\ast_{\phi} y = \phi(x, y)$ for all $x, y \in A$.
Then $(A,\circ,\ast_{\phi})$ is a compatible  algebra.

\textbf{Step 3.} Find the orbits of $Aut(A)$ on $\mathrm{Z}^2(A,A)$.

\textbf{Step 4.} Choose a representative $\phi$ from each orbit to classify the constructed compatible 
algebra $(A,\circ,\ast)$.

\begin{lem}\label{cocycles}
Let $(A,\circ)$ be an anti-pre-Lie algebra in Proposition \ref{thm:classification}. The set $\mathrm{Z}^2(A,A)$ is of the form:
\end{lem}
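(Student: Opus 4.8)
The plan is to carry out explicitly the four-step program just described, specialized to each of the nine algebras $A_1,\dots,A_9$ from Proposition~\ref{thm:classification}. For a fixed $(A,\circ)$ with basis $\{e_1,e_2\}$, I would write the unknown bilinear map $\phi$ in terms of eight scalar parameters,
\[
\phi(e_i,e_j)=a_{ij}e_1+b_{ij}e_2,\qquad i,j\in\{1,2\},
\]
so that computing $\mathrm{Z}^2(A,A)$ amounts to determining which choices of the $a_{ij},b_{ij}$ satisfy the four conditions (i)--(iv). Conditions (i) and (ii) say precisely that $\ast_\phi$ is itself an anti-pre-Lie multiplication (they are \eqref{anti id1} and \eqref{anti id2} written for $\phi$), while (iii) and (iv) are the compatibility identities \eqref{compatible anti-pre id1} and \eqref{compatible anti-pre id2} coupling $\phi$ with the fixed product $\circ$. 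Since all four conditions are linear or bilinear in the entries of $\phi$, substituting the three independent triples $(x,y,z)$ from $\{e_1,e_2\}$ (using multilinearity and the cyclic sums) turns each condition into a finite system of polynomial equations in the eight unknowns.

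First I would, for each algebra $A_k$, tabulate the products $e_i\circ e_j$ from its definition and then expand (i)--(iv) on all basis triples. The crucial simplification is that conditions (i) and (ii) do not involve $\circ$ at all, so their solution set is the same "generic" family of anti-pre-Lie structures on the two-dimensional space; the work specific to each $A_k$ is to intersect that family with the linear constraints coming from (iii) and (iv). Because $\circ$ is given by concrete structure constants, conditions (iii) and (iv) reduce to linear equations in the $a_{ij},b_{ij}$ once (i),(ii) are imposed, so solving them is a matter of Gaussian elimination. The output of this step, for each $A_k$, is an explicit parametrized description of $\phi$, i.e.\ a list of the free parameters and the relations among the entries; this is exactly the content of the form of $\mathrm{Z}^2(A,A)$ asserted in the lemma.

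The main obstacle will be purely organizational rather than conceptual: there are nine base algebras (with $A_6$ and $A_8$ carrying an extra parameter $\lambda$), and for each one the combined system (i)--(iv) must be solved correctly and completely, so the risk is bookkeeping errors and missed solution branches rather than any deep difficulty. I would mitigate this by treating (i) and (ii) once and for all to pin down the admissible shape of $\phi$, and only then feeding in the $\circ$-dependent equations (iii),(iv) for each $A_k$ separately; cross-checking is easy because setting $\phi=0$ must always lie in the solution set, and because any solution $\phi$ must, by Step~2 together with Proposition~\ref{interchange}, actually make $(A,\circ,\ast_\phi)$ a compatible anti-pre-Lie algebra. I would also exploit symmetry: whenever two of the conditions degenerate (for instance when several structure constants of $\circ$ vanish, as in $A_1$ or $A_3$), the system collapses substantially, so those cases can be dispatched quickly and used as sanity checks against the richer cases $A_8(\lambda)$ and $A_9$. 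The final presentation records, algebra by algebra, the resulting linear space $\mathrm{Z}^2(A_k,A_k)$ in the displayed tabular form.
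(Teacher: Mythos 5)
Your proposal is correct and takes essentially the same approach as the paper: the paper gives no separate proof of Lemma~\ref{cocycles}, obtaining it exactly by the four-step method stated just before it, i.e.\ writing $\phi(e_i,e_j)=a_{ij}e_1+b_{ij}e_2$ and solving conditions (i)--(iv) on basis triples (quadratic from (i),(ii), linear in $\phi$ from (iii),(iv)) for each algebra $A_1,\dots,A_9$, with the parameters $\lambda$ in $A_6$ and $A_8$ split into subcases. Your attention to multiple solution branches is apt, since the several families listed for $A_2$, $A_3$, $A_5$, $A_6$ and $A_8(0)$ are precisely the distinct components of the quadratic variety cut out by (i),(ii) intersected with the linear constraints (iii),(iv).
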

(1) \ $\mathrm{Z}^2(A_2,A_2):$
\[
\begin{cases}
\phi(e_1,e_1)=\alpha e_1+\beta e_2,\\
\phi(e_2,e_1)=\gamma e_1,\\
\phi(e_1,e_2)=\gamma e_1,\\
\phi(e_2,e_2)=\gamma e_2.
\end{cases} \quad
\begin{cases}
\phi(e_1,e_1)=\alpha e_1+\beta e_2,\\
\phi(e_2,e_1)=0,\\
\phi(e_1,e_2)=\gamma e_2,\\
\phi(e_2,e_2)=0.
\end{cases} \quad
\begin{cases}
\phi(e_1,e_1)=\alpha e_1,\\
\phi(e_2,e_1)=0,\\
\phi(e_1,e_2)=0,\\
\phi(e_2,e_2)=\beta e_2.
\end{cases}
\]

(2) \ $\mathrm{Z}^2(A_3,A_3):$
\[
\begin{cases}
\phi(e_1,e_1)=\alpha e_1+\beta e_2,\\
\phi(e_2,e_1)=0,\\
\phi(e_1,e_2)=\gamma e_2,\\
\phi(e_2,e_2)=0.
\end{cases}
\quad
\begin{cases}
\phi(e_1,e_1)=\alpha e_1+\beta e_2,\\
\phi(e_2,e_1)=\gamma e_1+\delta e_2,\\
\phi(e_1,e_2)=0,\\
\phi(e_2,e_2)=0.
\end{cases}
\quad
\begin{cases}
\phi(e_1,e_1)=\alpha e_1+\beta e_2,\\
\phi(e_2,e_1)=\gamma e_2,\\
\phi(e_1,e_2)=\delta e_2,\\
\phi(e_2,e_2)=0.
\end{cases}
\quad
\begin{cases}
\phi(e_1,e_1)=\alpha e_1+\beta e_2,\\
\phi(e_2,e_1)=\gamma e_1,\\
\phi(e_1,e_2)=\gamma e_1,\\
\phi(e_2,e_2)=\gamma e_2.
\end{cases}
\]

(3) \ $\mathrm{Z}^2(A_4,A_4):$
\[
\begin{cases}
\phi(e_1,e_1)=\alpha e_1-\beta e_2,\\
\phi(e_2,e_1)=-\gamma e_1+\beta e_2,\\
\phi(e_1,e_2)=-\gamma e_1+\beta e_2,\\
\phi(e_2,e_2)=\gamma e_1+\delta e_2.
\end{cases}
\]

(4) \ $\mathrm{Z}^2(A_5,A_5):$
\[
\begin{cases}
\phi(e_1,e_1)=-\alpha e_1+\beta e_2,\\
\phi(e_2,e_1)=\gamma e_1+\alpha e_2,\\
\phi(e_1,e_2)=0,\\
\phi(e_2,e_2)=0.
\end{cases}
\quad 
\begin{cases}
\phi(e_1,e_1)=\alpha e_2,\\
\phi(e_2,e_1)=\alpha e_1,\\
\phi(e_1,e_2)=\beta e_2,\\
\phi(e_2,e_2)=\beta e_1.
\end{cases}
\]

(5) \ $\mathrm{Z}^2(A_6,A_6):$

If $\lambda=0$, then
\[
\begin{cases}
\phi(e_1,e_1)=-\alpha
 e_1+\beta e_2,\\
\phi(e_2,e_1)=\gamma e_1+\alpha e_2,\\
\phi(e_1,e_2)=0,\\
\phi(e_2,e_2)=0.
\end{cases}
\quad 
\begin{cases}
\phi(e_1,e_1)=0,\\
\phi(e_2,e_1)=\alpha e_2,\\
\phi(e_1,e_2)=0,\\
\phi(e_2,e_2)=\beta e_2.
\end{cases}
\quad 
\begin{cases}
\phi(e_1,e_1)=0,\\
\phi(e_2,e_1)=\alpha e_1,\\
\phi(e_1,e_2)=0,\\
\phi(e_2,e_2)=\beta e_1+\gamma e_2.
\end{cases}
\]

If $\lambda\neq-1$, then
\[
\begin{cases}
\phi(e_1,e_1)=0,\\
\phi(e_2,e_1)=\alpha e_1,\\
\phi(e_1,e_2)=\beta e_1 +\gamma e_2,\\
\phi(e_2,e_2)=0.
\end{cases}
\]

If $\lambda=-1$, then
\[
\begin{cases}
\phi(e_1,e_1)=\alpha e_1,\\
\phi(e_2,e_1)=\beta e_1,\\
\phi(e_1,e_2)=\alpha e_2, \\
\phi(e_2,e_2)=\beta e_2.
\end{cases}
\]

(6) \ $\mathrm{Z}^2(A_7,A_7):$
\[
\begin{cases}
\phi(e_1,e_1)=0,\\
\phi(e_2,e_1)=\alpha e_1,\\
\phi(e_1,e_2)=0, \\
\phi(e_2,e_2)=\beta e_1+\gamma e_2.
\end{cases}
\]

(7) \ $\mathrm{Z}^2(A_8,A_8):$
If $\lambda\neq\{-2,0\}$, then
\[
\begin{cases}
\phi(e_1,e_1)=0,\\
\phi(e_2,e_1)=(\alpha+\beta) e_1,\\
\phi(e_1,e_2)=2\alpha e_1, \\
\phi(e_2,e_2)=\gamma e_1+2\beta e_2.
\end{cases}
\]

If $\lambda=-2$, then
\[
\begin{cases}
\phi(e_1,e_1)=3\alpha e_1,\\
\phi(e_2,e_1)=2\beta e_1+\alpha e_2,\\
\phi(e_1,e_2)=\gamma e_1+2\alpha e_2,\\
\phi(e_2,e_2)=3\beta e_2.
\end{cases}
\]

If $\lambda=0$, then
\[
\begin{cases}
\phi(e_1,e_1)=0,\\
\phi(e_2,e_1)=(\alpha+\beta)e_1,\\
\phi(e_1,e_2)=2\alpha e_1, \\
\phi(e_2,e_2)=\gamma e_1+2\beta e_2.
\end{cases}
\quad
\begin{cases}
\phi(e_1,e_1)=0,\\
\phi(e_2,e_1)=0,\\
\phi(e_1,e_2)=\alpha e_1+\beta e_2, \\
\phi(e_2,e_2)=\gamma e_1+\delta e_2.
\end{cases}
\quad
\begin{cases}
\phi(e_1,e_1)=-\alpha e_1,\\
\phi(e_2,e_1)=\alpha e_2,\\
\phi(e_1,e_2)=-\beta e_1, \\
\phi(e_2,e_2)=\beta e_2.
\end{cases}
\]

(8) \ $\mathrm{Z}^2(A_9,A_9):$
\[
\begin{cases}
\phi(e_1,e_1)=0,\\
\phi(e_2,e_1)=(\alpha +\beta)e_1,\\
\phi(e_1,e_2)=2\alpha e_1, \\
\phi(e_2,e_2)=\gamma e_1+2\beta e_2.
\end{cases}
\]

\begin{lem}
The description of the group of automorphisms of every $2$-dimensional anti-pre-Lie
algebra is given below.

\centering{\begin{tabular}{|l|p{3in}|}
\hline    
\text{Algebra} & \text{Automorphisms}  \\
\hline\hline
$A_1$& $\theta(e_1)=a e_1+b e_2, \quad \theta(e_2)=c e_1 + d e_2$, \newline
where $ad-bc\neq0$.\\
\hline
$A_2$&$\theta(e_1)=e_1, \quad \theta(e_2)=a e_2$, \ $a\in\mathbb{C}^{*}$ .\\
\hline
$A_3$& $\theta(e_1)=a e_1+b e_2, \quad \theta(e_2)=a^2 e_2$, \ $a\in\mathbb{C}^{*}, \ b\in\mathbb{C}$.\\
\hline
$A_4$&$\theta(e_1)=e_1, \quad \theta(e_2)= e_2$ or \newline  $\theta(e_1)=e_2, \quad \theta(e_2)= e_1$.\\
\hline
$A_5$&$\theta(e_1)=e_1, \quad \theta(e_2)= e_2$ or\newline $\theta(e_1)=-e_1, \quad \theta(e_2)= e_2$.\\
\hline
$A_6(\lambda)$&$\theta(e_1)=a e_1, \quad \theta(e_2)= e_2$, \ $a\in\mathbb{C}^{*}$.\\
\hline
$A_7$&$\theta(e_1)=e_1, \quad  \theta(e_2)= a e_1+e_2$, \ $a\in\mathbb{C}$.\\
\hline
$A_8(\lambda), \ \lambda\neq-1$&$\theta(e_1)=a e_1, \quad \theta(e_2)= e_2$, \ $a\in\mathbb{C}^{*}$.\\
\hline
$A_9$&$\theta(e_1)= e_1, \quad \theta(e_2)= a e_1+e_2$, $a\in\mathbb{C}$.\\
\hline
\end{tabular}}
\end{lem}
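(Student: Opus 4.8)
The plan is to determine, for each of the nine algebras $A_1,\dots,A_9$ of Proposition~\ref{thm:classification}, all invertible linear maps $\theta\colon A\to A$ that respect the single operation $\circ$, i.e.\ satisfy $\theta(x\circ y)=\theta(x)\circ\theta(y)$ for all $x,y\in A$. Since such a $\theta$ is determined by its values on the basis, I write $\theta(e_1)=a e_1+b e_2$ and $\theta(e_2)=c e_1+d e_2$ with $ad-bc\neq 0$, and encode the homomorphism property as the four vector identities
\[
\theta(e_i\circ e_j)=\theta(e_i)\circ\theta(e_j),\qquad (i,j)\in\{1,2\}^2 .
\]
Expanding each right-hand side by bilinearity and substituting the explicit multiplication table of the relevant $A_i$ turns these into a system of polynomial equations in $a,b,c,d$; solving it together with the constraint $ad-bc\neq 0$ produces exactly the automorphism group.

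The procedure is then carried out case by case and is entirely mechanical. The abelian algebra $A_1$ is immediate: every product vanishes, the homomorphism condition reads $0=0$, and hence $\mathrm{Aut}(A_1)$ consists of all invertible linear maps, matching the first row. For a representative nontrivial case, in $A_2$ (only $e_1\circ e_1=e_1$ nonzero) the $(1,1)$ identity gives $a e_1+b e_2=a^2 e_1$, forcing $b=0$ and $a\in\{0,1\}$; invertibility excludes $a=0$, the remaining three identities then force $c=0$, and $d$ stays free and nonzero, yielding $\theta(e_1)=e_1$, $\theta(e_2)=d e_2$ as listed. The same substitute-and-eliminate step recovers each remaining row.

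I expect the main difficulty to be purely organizational, concentrated in the two parametric families $A_6(\lambda)$ and $A_8(\lambda)$. Here the coefficients appearing in the polynomial system depend on $\lambda$ (through factors such as $\lambda+1$, $\lambda-1$, $2\lambda+1$), so the delicate point is to confirm that the solution set is genuinely independent of $\lambda$ on the stated ranges and, crucially, that no special value at which one of these factors vanishes enlarges the automorphism group. This requires scrutinizing the degenerate values (for instance $\lambda\in\{-2,0\}$ for $A_8(\lambda)$, and the analogous boundary values for $A_6(\lambda)$) separately, verifying in each that although a given equation degenerates, a different equation in the system still pins down the missing constraints, so that the uniform description in the table persists. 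Because every algebra is two-dimensional, each system consists of at most four vector---equivalently eight scalar---equations in the four unknowns $a,b,c,d$, so all cases are solvable explicitly by hand; assembling the nine solved cases yields the table.
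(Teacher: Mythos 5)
Your method---writing $\theta(e_1)=ae_1+be_2$, $\theta(e_2)=ce_1+de_2$ with $ad-bc\neq 0$, imposing $\theta(e_i\circ e_j)=\theta(e_i)\circ\theta(e_j)$, and solving the resulting polynomial system case by case---is the right and essentially the only approach (the paper states this lemma without proof, so such a computation is exactly what is required), and you correctly single out the parametric families $A_6(\lambda)$ and $A_8(\lambda)$ as the delicate point. But your concluding claim, that scrutiny of the degenerate parameter values shows ``the uniform description in the table persists,'' is precisely where the proposal fails: it does not persist. Run your own system for $A_6(\lambda)$: the equations from $\theta(e_1)\circ\theta(e_1)=0$ and $\theta(e_2)\circ\theta(e_1)=\theta(-e_1)$ force $b=0$ and $d=1$, and then $\theta(e_2)\circ\theta(e_2)=-ce_1+\lambda e_2$ must equal $\theta(\lambda e_2)=\lambda ce_1+\lambda e_2$, i.e.\ $(\lambda+1)c=0$. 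For $\lambda\neq-1$ this gives $c=0$ and the table's answer, but at $\lambda=-1$ the parameter $c$ is free: one checks directly that $\theta(e_1)=ae_1$, $\theta(e_2)=ce_1+e_2$ (with $a\in\mathbb{C}^{*}$, $c\in\mathbb{C}$) is an automorphism of $A_6(-1)$, since $(ce_1+e_2)\circ(ce_1+e_2)=c(e_2\circ e_1)+e_2\circ e_2=-ce_1-e_2=\theta(-e_2)$ and the other three identities hold trivially. The same happens for $A_8(\lambda)$: comparing the $e_2$-components of $\theta(e_1)\circ\theta(e_2)=(\lambda+1)\theta(e_1)$ and $\theta(e_2)\circ\theta(e_1)=\lambda\theta(e_1)$ gives $b=0$ for every $\lambda$, then $d=1$, and the $(2,2)$ identity reduces to $(\lambda+2)c=0$; so at $\lambda=-2$ (where $e_1\circ e_2=-e_1$, $e_2\circ e_1=-2e_1$, $e_2\circ e_2=-3e_2$) the maps $\theta(e_1)=ae_1$, $\theta(e_2)=ce_1+e_2$ are automorphisms not listed in the table, as one verifies from $(ce_1+e_2)\circ(ce_1+e_2)=-3ce_1-3e_2=\theta(-3e_2)$.

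So the gap is concrete: the verification you deferred cannot be completed as stated, because at $\lambda=-1$ for $A_6$ and $\lambda=-2$ for $A_8$ the automorphism groups are strictly larger (two-parameter rather than one-parameter); your write-up assumes the outcome of that computation rather than performing it, and the assumed outcome is false, which means the lemma's table itself is incomplete at these two values. By contrast, the other degenerate values you flagged ($\lambda=0$ for $A_6$, $\lambda\in\{-2,0\}$ for $A_8$ apart from the $-2$ issue above, and $\lambda=1$, $\lambda=-\tfrac12$ where individual coefficients vanish) are indeed harmless---there a different equation still forces $b=c=0$---so your worry was correctly located but resolved in the wrong direction. A correct proof must add the two exceptional rows (or restrict the parameter ranges); this is not merely cosmetic, since the orbit computations in Section 5 use this table, and the extra automorphisms at $A_6(-1)$ and $A_8(-2)$ act nontrivially on the corresponding cocycle parameters (e.g.\ they send $\beta$ to $\beta+c\alpha$ in the family leading to $CA_{34}$, allowing a further normalization there), so the downstream classification would also need revisiting.
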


\begin{thm}
Let $A$ be a nonzero $2$-dimensional compatible anti-pre-Lie algebra. Then $A$ is isomorphic to one and only one of the following algebras:
\end{thm}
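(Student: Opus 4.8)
The plan is to run the four-step method verbatim. First I would observe that in any compatible anti-pre-Lie algebra $(A,\circ,\ast)$ both operations are individually anti-pre-Lie, so by Proposition~\ref{thm:classification} I may, after a linear change of basis, assume the first operation $(A,\circ)$ is one of the normal forms $A_1,\dots,A_9$. Once $\circ$ is fixed, Proposition~\ref{interchange} shows that the compatible partners $\ast$ are exactly the multiplications $\ast_\phi$ coming from the cocycle space $\mathrm{Z}^2(A,A)$, and these are listed in Lemma~\ref{cocycles}. The abelian base $A_1$ is degenerate: with $\circ=0$ both \eqref{compatible anti-pre id1} and \eqref{compatible anti-pre id2} hold trivially, so $\ast$ may be an arbitrary anti-pre-Lie structure, which by the swap symmetry below contributes nothing beyond the $\phi=0$ representatives of the other bases. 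Thus the real work is confined to $A_2,\dots,A_9$.

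Next, for each fixed base $A_i$ I would execute Step~3 by letting the automorphism group $\mathrm{Aut}(A_i)$—read off from the automorphism lemma above—act on the affine parameter space $\mathrm{Z}^2(A_i,A_i)$, where $\theta\in\mathrm{Aut}(A_i)$ transports $\phi$ to the cocycle $(x,y)\mapsto\theta^{-1}\phi(\theta x,\theta y)$, and then select one $\phi$ from each orbit. Since the automorphism groups are at most two-parameter (and several are finite), the action amounts to rescaling together with a few discrete sign or transposition choices among the structure constants $\alpha,\beta,\gamma,\delta$; I would use these freedoms to bring each $\phi$ to a canonical normal form, treating $\phi=0$ (the trivial second operation) as its own orbit. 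The families $A_6(\lambda)$ and $A_8(\lambda)$ require care, since the very shape of $\mathrm{Z}^2$ changes at the exceptional values $\lambda=0,-1$ and $\lambda=-2,0$ respectively, so these split into subcases that must be normalized separately.

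The main obstacle is Step~4: assembling a single irredundant list. Two issues arise. First, the symmetry $k_1\leftrightarrow k_2$ of \eqref{compatible id} yields an isomorphism $(A,\circ,\ast)\cong(A,\ast,\circ)$, so a structure produced on base $A_i$ may coincide with one produced on a different base $A_j$; I would use this to merge such pairs (and to see that the base $A_1$ contributes nothing new). Second, I must verify that the surviving representatives are pairwise non-isomorphic, which is the delicate part. For this I would attach to each candidate a list of isomorphism invariants of the pencil $(\circ,\ast)$—the isomorphism types of the two commutator Lie algebras $(A,[-,-]_1)$ and $(A,[-,-]_2)$, the type of the generic bracket $k_1[-,-]_1+k_2[-,-]_2$, and the ranks and kernels of the left-multiplication operators $\mathcal{L}_\circ,\mathcal{L}_\ast$—and show these invariants separate all classes and, for the one-parameter families, pin down exactly when $A(\lambda)\cong A(\lambda')$. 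Collecting the resulting normal forms yields the asserted list, and the invariant computation guarantees the ``one and only one'' clause.
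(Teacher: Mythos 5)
Your proposal follows essentially the same route as the paper's proof: fix the $\circ$-operation as one of the normal forms $A_1,\dots,A_9$ of Proposition~\ref{thm:classification}, read off the admissible second operations from the cocycle sets of Lemma~\ref{cocycles}, and normalize the parameters along $\mathrm{Aut}(A_i)$-orbits, with the abelian base handled exactly as you do (the paper views $(0,\ast)$ as a single anti-pre-Lie algebra, recovering $CA_1$--$CA_9$). The only divergences are that the paper does not carry out your invariant-based pairwise non-isomorphism check, relying instead on the orbit normalization together with the recorded residual identifications for $CA_{27}$, $CA_{28}$, $CA_{29}$, and that your proposed swap-merging across distinct bases is unnecessary (indeed would over-merge), since under isomorphism of the ordered pair $(\circ,\ast)$ the isomorphism type of the $\circ$-component is already an invariant separating the bases $A_2,\dots,A_9$.
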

\begin{enumerate}
\item[$CA_1$:] Abelian;
\item[$CA_2$:] $e_1\circ e_1=e_1$;
\item[$CA_3$:] $e_1\circ e_1=e_2$;
\item[$CA_4$:] $e_1\circ e_1=e_1, \ e_2\circ e_2=e_2$;
\item[$CA_5$:] $e_{1}\circ e_{1}=-e_{2}, e_{1}\circ
e_{2}=0, e_{2}\circ e_{1}=-e_{1}, e_{2}\circ e_{2}=0;$

\item[$CA_6(\lambda)$:] ($\lambda\in\mathbb{C}$)   $e_{1}\circ
e_{1}=0, e_{1}\circ e_{2}=0, e_{2}\circ e_{1}=-e_{1}, e_{2}\circ
e_{2}=\lambda e_{2};$

\item[$CA_7$:] $e_{1}\circ e_{1}=0, e_{1}\circ e_{2}=0,
e_{2}\circ e_{1}=-e_{1}, e_{2}\circ e_{2}=e_{1}-e_{2};$

\item[$CA_8(\lambda)$:] ($\lambda\ne -1$)  $e_{1}\circ e_{1}=0,
e_{1}\circ e_{2}=(\lambda+1)e_{1}, e_{2}\circ e_{1}=\lambda e_{1},
e_{2}\circ e_{2}=(\lambda-1)e_{2}$;

\item[$CA_9$:] $e_{1}\circ e_{1}=0, e_{1}\circ
e_{2}=-e_{1}, e_{2}\circ e_{1}=-2e_{1}, e_{2}\circ
e_{2}=e_{1}-3e_{2}.$

\item[$CA_{10}(\alpha, \beta)$:] 
$\begin{cases}
e_1\circ e_1=e_1,\\
e_1\ast e_1=\alpha e_1+\beta e_2, \ e_2\ast e_1= e_1, \ e_1\ast e_2= e_1, \ e_2\ast e_2= e_2.
\end{cases}$

\item[$CA_{11}(\alpha)$:] $\begin{cases}
e_1\circ e_1=e_1,\\
e_1\ast e_1=\alpha e_1+\delta e_2, \quad \delta\in\{0,1\}.
\end{cases}$

\item[$CA_{12}(\alpha, \gamma)$:] $\begin{cases}
e_1\circ e_1=e_1,\\
e_1\ast e_1=\alpha e_1+\delta e_2, \ e_1\ast e_2=\gamma e_2, \quad \text{where} \quad \delta\in\{0,1\}.
\end{cases}$

\item[$CA_{13}(\alpha)$:] $\begin{cases}
e_1\circ e_1=e_1,\\
e_1\ast e_1=\alpha e_1, \ e_2\ast e_2=\delta e_2, \quad \delta\in\{0,1\}.
\end{cases}$

\item[$CA_{14}(\beta)$:] $\begin{cases}
e_1\circ e_1=e_2,\\
e_1\ast e_1=\delta e_1+\beta e_2, \ e_1\ast e_2=\delta e_2, \quad \delta\in\{0,1\}.
\end{cases}$

\item[$CA_{15}$:] $\begin{cases}
e_1\circ e_1=e_2,\\
e_1\ast e_2=e_2.
\end{cases}$

\item[$CA_{16}(\gamma)$:] $\begin{cases}
e_1\circ e_1=e_2,\\
e_1\ast e_1=e_1, \ e_1\ast e_2=\gamma e_2, \quad \gamma\neq1.
\end{cases}$

\item[$CA_{17}(\beta, \delta)$:] $\begin{cases}
e_1\circ e_1=e_2,\\
e_1\ast e_1=\beta e_2, \ e_2\ast e_1=e_1+\delta e_2.
\end{cases}$

\item[$CA_{18}( \delta)$:] $\begin{cases}
e_1\circ e_1=e_2,\\
e_1\ast e_1=e_1, \ e_2\ast e_1=\delta e_2,
\end{cases}$

\item[$CA_{19}$:] $\begin{cases}
e_1\circ e_1=e_2,\\
e_2\ast e_1=e_2,
\end{cases}$

\item[$CA_{20}( \beta)$:] $\begin{cases}
e_1\circ e_1=e_2,\\
e_1\ast e_1=\delta e_1+\beta e_2, \ e_2\ast e_1=\delta e_2, \quad \delta\in\{0,1\}.
\end{cases}$

\item[$CA_{21}(\alpha, \gamma, \delta)$:] $\begin{cases}
e_1\circ e_1=e_2,\\
e_1\ast e_1=\alpha e_1, \ e_2\ast e_1=\gamma e_2, \ e_1\ast e_2=\delta e_2.
\end{cases}$

\item[$CA_{22}(\beta)$:] $\begin{cases}
e_1\circ e_1=e_2,\\
e_1\ast e_1=\beta e_2, \ e_2\ast e_1=-\delta e_2, \ e_1\ast e_2=\delta e_2, \quad \delta\in\{0,1\}.
\end{cases}$

\item[$CA_{23}(\beta, \gamma, \delta)$:] $\begin{cases}
e_1\circ e_1=e_2,\\
e_1\ast e_1=e_1+\beta e_2, \ e_2\ast e_1=\gamma e_2, \ e_1\ast e_2=\delta e_2.
\end{cases}$

\item[$CA_{24}(\beta)$:] $\begin{cases}
e_1\circ e_1=e_2,\\
e_1\ast e_1=\beta e_2.
\end{cases}$

\item[$CA_{25}$:] $\begin{cases}
e_1\circ e_1=e_2,\\
e_1\ast e_1=e_1.
\end{cases}$

\item[$CA_{26}(\beta)$:] $\begin{cases}
e_1\circ e_1=e_2,\\
e_1\ast e_1=\beta e_2, \ e_2\ast e_1= e_1, \ e_1\ast e_2=e_1, \ e_2\ast e_2= e_2.
\end{cases}$

\item[$CA_{27}(\alpha, \beta, \gamma,\delta)$:] $\begin{cases}
e_1\circ e_1=e_1, \ e_2\circ e_2=e_2,\\
e_1\ast e_1=\alpha e_1-\beta e_2, \ e_2\ast e_1=-\gamma e_1+\beta e_2, \ e_1\ast e_2=-\gamma e_1+\beta e_2, \ e_2\ast e_2=\gamma e_1+\delta e_2.
\end{cases}$

\item[$CA_{28}(\alpha, \beta, \gamma)$:] $\begin{cases}
e_1\circ e_1=-e_2, \ e_2\circ e_1=-e_1,\\
e_1\ast e_1=-\alpha e_1+\beta e_2, \ e_2\ast e_1=\gamma e_1+\alpha e_2.
\end{cases}$

\item[$CA_{29}(\alpha)$:] $\begin{cases}
e_1\circ e_1=-e_2, \ e_2\circ e_1=-e_1,\\
e_1\ast e_1=\alpha e_2, \ e_2\ast e_1=\alpha e_1, \ e_1\ast e_2=\beta e_2, \ e_2\ast e_2=\beta e_1.
\end{cases}$

\item[$CA_{30}(\beta, \gamma)$:] $\begin{cases}
e_2\circ e_1=-e_1, \\
e_1\ast e_1=-e_1+\beta e_2, \ e_2\ast e_1=\gamma e_1+e_2.
\end{cases}$

\item[$CA_{31}(\gamma)$:] $\begin{cases}
e_2\circ e_1=-e_1, \\
e_1\ast e_1=\delta e_2, \ e_2\ast e_1=\gamma e_1, \quad \delta\in\{0,1\}.
\end{cases}$

\item[$CA_{32}(\alpha,\beta)$:] $\begin{cases}
e_2\circ e_1=-e_1, \\
e_2\ast e_1=\alpha e_1, \ e_2\ast e_2=\beta  e_2.
\end{cases}$

\item[$CA_{33}(\alpha,\gamma)$:] $\begin{cases}
e_2\circ e_1=-e_1, \\
e_2\ast e_1=\alpha e_1, \ e_2\ast e_2=\delta e_1+\gamma e_2, \quad \delta\in\{0,1\}.
\end{cases}$

\item[$CA_{34}(\beta)$:] $\begin{cases}
e_2\circ e_1=-e_1, \ e_2\circ e_2=-e_2 \\
e_1\ast e_1=\delta e_1, \ e_2\ast e_1=\beta e_1, \ e_1\ast e_2=\delta e_2, \ e_2\ast e_2=\beta e_2, \quad \delta\in\{0,1\}.
\end{cases}$

\item[$CA_{35}(\alpha, \beta)$:] $\begin{cases}
e_2\circ e_1=-e_1, \ e_2\circ e_2=\lambda e_2 \\
e_2\ast e_1=\alpha e_1, \ e_1\ast e_2=\beta e_1+\delta e_2, \quad \delta\in\{0,1\}.
\end{cases}$

\item[$CA_{36}(\alpha, \gamma)$:] $\begin{cases}
e_2\circ e_1=-e_1, \ e_2\circ e_2=e_1-e_2 \\
e_2\ast e_1=\alpha e_1, \ e_2\ast e_2=\gamma e_2.
\end{cases}$

\item[$CA_{37}(\alpha, \beta)$:] $\begin{cases}
e_2\circ e_1=-e_1, \ e_2\circ e_2=e_1-e_2 \\
e_2\ast e_1=\alpha e_1, \ e_2\ast e_2=\beta e_1+\alpha e_2.
\end{cases}$

\item[$CA_{38}{[\lambda]}(\alpha, \beta)$:] $\begin{cases}
e_1\circ e_2=(\lambda+1)e_1, \ e_2\circ e_1=\lambda e_1, \ e_2\circ e_2=(\lambda-1)e_2 \\
e_2\ast e_1=(\alpha+\beta) e_1, \ e_1\ast e_2=2\alpha e_1, \ e_2\ast e_2=\delta e_1+2\beta e_2, \quad \delta\in\{0,1\}.
\end{cases}$

\item[$CA_{39}(\beta,\gamma)$:] $\begin{cases}
e_1\circ e_2=-e_1, \ e_2\circ e_1=-2e_1, \ e_2\circ e_2=-3e_2 \\
e_1\ast e_1=3\delta e_1,\ e_2\ast e_1=2\beta e_1+\delta e_2, \ e_1\ast e_2=\gamma e_1+2\delta e_2, \ e_2\ast e_2=3\beta e_2, \quad \delta\in\{0,1\}.
\end{cases}$

\item[$CA_{40}(\alpha,\beta)$:] $\begin{cases}
e_1\circ e_2=e_1, \  e_2\circ e_2=-e_2 \\ e_2\ast e_1=(\alpha +\beta) e_1, \ e_1\ast e_2=2\alpha e_1, \ e_2\ast e_2=\delta e_1+2\beta e_2, \quad \delta\in\{0,1\}.
\end{cases}$

\item[$CA_{41}(\alpha, \beta, \gamma)$:] $\begin{cases}
e_1\circ e_2=e_1, \  e_2\circ e_2=-e_2 \\ e_1\ast e_2=\alpha e_1+\beta e_2, \ e_2\ast e_2=\delta e_1+\gamma e_2, \quad \delta\in\{0,1\}.
\end{cases}$

\item[$CA_{42}(\alpha, \beta)$:] $\begin{cases}
e_1\circ e_2=e_1, \  e_2\circ e_2=-e_2 \\ e_1\ast e_2=\alpha e_1+\delta e_2, \ e_2\ast e_2=\beta e_2, \quad \delta\in\{0,1\}.
\end{cases}$

\item[$CA_{43}(\beta)$:] $\begin{cases}
e_1\circ e_2=e_1, \  e_2\circ e_2=-e_2 \\ e_1\ast e_1=-\delta e_1, \ e_2\ast e_1=\delta e_2, \ e_1\ast e_2=-\beta e_1, \ e_2\ast e_2=\beta e_2, \quad \delta\in\{0,1\}.
\end{cases}$

\item[$CA_{44}(\alpha, \beta)$:] $\begin{cases}
e_1\circ e_2=-e_1, \ e_2\circ e_1=-2 e_1, \ e_2\circ e_2=e_1-3e_2 \\
e_2\ast e_1=(\alpha+\beta) e_1, \ e_1\ast e_2=2\alpha e_1, \ e_2\ast e_2=2\beta e_2.
\end{cases}$

\item[$CA_{45}(\alpha, \gamma)$:] $\begin{cases}
e_1\circ e_2=-e_1, \ e_2\circ e_1=-2 e_1, \ e_2\circ e_2=e_1-3e_2 \\
e_2\ast e_1=2\alpha e_1, \ e_1\ast e_2=\alpha e_1, \ e_2\ast e_2=\gamma e_1-3\alpha e_2.
\end{cases}$
\end{enumerate}
Note that $CA_{27}(\alpha, \beta, \gamma,\delta)\cong CA_{27}(\delta, -\gamma, -\beta,\alpha)$, $CA_{28}(\alpha, \beta, \gamma)\cong CA_{28}(-\alpha, \beta, \gamma)$, $CA_{29}(\alpha, \beta)\cong CA_{29}(\alpha, -\beta)$. 
\begin{proof}

\

1. \textbf{Construction from $A_1$.}

Note that the constructed compatible anti-pre-Lie algebra $(A,\circ,\ast)$ by abelian anti-pre-Lie algebra $(A,\circ)$ can be considered as an anti-pre-Lie algebra $(A,\ast)$. Hence, we get algebras 
 from $CA_1$ to $CA_9$.

2. \textbf{Construction from $A_2$.}

By Lemma \ref{cocycles} we have the following cases:

\textbf{Case 1.}
\[
\begin{cases}
e_1\circ e_1=e_1,\\
e_1\ast e_1=\alpha e_1+\beta e_2, \ e_2\ast e_1=\gamma e_1, \ e_1\ast e_2=\gamma e_1, \ e_2\ast e_2=\gamma e_2.
\end{cases}
\]
Now we consider the automorphism: $\theta(e_1)=e_1, \ \theta(e_2)=ae_2$, where $a\neq0$.

By verifying all  multiplications of the algebra in the new basis we obtain the relations between the
parameters $\{\alpha', \beta', \gamma'\}$ and $\{\alpha, \beta, \gamma\}$:
\[
\begin{split}
\alpha'=\alpha,\quad
\beta'=\frac{\beta}{a},\quad
\gamma'=a\gamma.
\end{split}
\]
\begin{itemize}
    \item \textbf{Case $\gamma\neq0$.} By setting $a=\frac1{\gamma}$ we get $CA_{10}(\alpha, \beta)$.
\item \textbf{Case $\gamma=0$.} 
If $\beta\neq0$, then by setting $a=\beta$ we can assume $\beta'=1$. Hence, we have $CA_{11}(\alpha)$.
\end{itemize}

\textbf{Case 2.}
\[
\begin{cases}
e_1\circ e_1=e_1,\\
e_1\ast e_1=\alpha e_1+\beta e_2, \ e_1\ast e_2=\gamma e_2.
\end{cases}
\]
By considering the automorphism and verifying all  multiplications of the algebra in the new basis we write the relations between the
parameters $\{\alpha', \beta', \gamma'\}$ and $\{\alpha, \beta, \gamma\}$:
\[
\begin{split}
\alpha'=\alpha,\quad
\beta'=\frac{\beta}{a},\quad
\gamma'=\gamma.
\end{split}
\]
\begin{itemize}
    \item We can assume $\beta'=1$ without loss of generality if $\beta\neq0$ by setting $a=\beta$. Thus, we get $CA_{12}(\alpha, \gamma)$.
\end{itemize}

\textbf{Case 3.}
\[
\begin{cases}
e_1\circ e_1=e_1,\\
e_1\ast e_1=\alpha e_1, \ e_2\ast e_2=\beta e_2.
\end{cases}
\]
By considering the automorphism and verifying all  multiplications of the algebra in the new basis we write the relations between the
parameters $\{\alpha', \beta'\}$ and $\{\alpha, \beta\}$:
\[
\begin{split}
\alpha'=\alpha,\quad
\beta'=a\beta.
\end{split}
\]
\begin{itemize}
    \item If $\beta\neq0$, with choosing $a=\frac{1}{\beta}$, we can assume $\beta'=1$. Hence, we obtain $CA_{13}(\alpha)$.
\end{itemize}

3. \textbf{Construction from $A_3$.}

By Lemma \ref{cocycles} we have the following cases:

\textbf{Case 1.}
\[
\begin{cases}
e_1\circ e_1=e_2,\\
e_1\ast e_1=\alpha e_1+\beta e_2, \ e_1\ast e_2=\gamma e_2.
\end{cases}
\]
Now we consider the automorphism: $\theta(e_1)=ae_1+be_2, \ \theta(e_2)=a^2e_2$, where $a\neq0$.

By verifying all  multiplications of the algebra in the new basis we obtain the relations between the
parameters $\{\alpha', \beta', \gamma'\}$ and $\{\alpha, \beta, \gamma\}$:
\[
\begin{split}
\alpha'=a\alpha,\quad
\beta'=\frac{b}{a}(\gamma-\alpha)+\beta,\quad
\gamma'=a\gamma.
\end{split}
\]
\begin{itemize}
    \item \textbf{Case $\alpha=\gamma$.} If $\alpha\neq0$, we can assume $\alpha'=1$ without loss of generality.  Hence we get $CA_{14}(\beta)$.
\item \textbf{Case $\alpha\neq\gamma$.} Then by setting $b=\frac{a\beta}{\alpha-\gamma}$, we get $\beta'=0$. Further, if $\alpha=0$, by putting $a=\frac1{\gamma}$ we obtain $CA_{15}$.
Otherwise, by setting $a=\frac{1}{\alpha}$ we can obtain $CA_{16}(\gamma)$.
\end{itemize}

\textbf{Case 2.}

\[
\begin{cases}
e_1\circ e_1=e_2,\\
e_1\ast e_1=\alpha e_1+\beta e_2, \ e_2\ast e_1=\gamma e_1+\delta e_2.
\end{cases}
\]
By verifying all  multiplications of the algebra in the new basis we obtain the relations between the
parameters $\{\alpha', \beta', \gamma', \delta'\}$ and $\{\alpha, \beta, \gamma, \delta\}$:
\[
\begin{split}
\alpha'&=a\alpha+b\gamma,\\
\beta'&=\frac{b}{a}(\delta-\alpha)-\frac{b^2\gamma}{a^2}+\beta,\\
\gamma'&=a^3\gamma,\\
\delta'&=-ab\gamma+a\delta
\end{split}
\]
\begin{itemize}
\item \textbf{Case $\gamma\neq0$.}
By choosing $a=\frac{1}{\sqrt[3]{\gamma}}, \ b=-\frac{a\alpha}{\gamma}$ we can obtain $\alpha'=0, \gamma'=1$. Then we derive $CA_{17}(\beta, \delta)$.
\item \textbf{Case $\gamma=0$.}

If $\alpha\neq\delta$, by setting $b=\frac{a\beta}{\alpha-\delta}$ we can assume $\beta'=0$. Then we can obtain $CA_{18}( \delta)$ or $CA_{19}$.

\

If $\alpha=\delta$, then we can get $CA_{20}( \beta)$.
\end{itemize}
\textbf{Case 3.}
\[
\begin{cases}
e_1\circ e_1=e_2,\\
e_1\ast e_1=\alpha e_1+\beta e_2, \ e_2\ast e_1=\gamma e_2, \ e_1\ast e_2=\delta e_2.
\end{cases}
\]
By verifying all  multiplications of the algebra in the new basis we obtain the relations between the
parameters $\{\alpha', \beta', \gamma', \delta'\}$ and $\{\alpha, \beta, \gamma, \delta\}$:
\[
\begin{split}
\alpha'&=a\alpha,\quad
\beta'=\frac{b}{a}(\delta+\gamma-\alpha)+\beta,\\
\gamma'&=a\gamma,\quad
\delta'=a\delta.
\end{split}
\]
\begin{itemize}
    \item \textbf{Case $\alpha\neq\delta+\gamma$.}
By setting $b=\frac{\beta}{\alpha-\delta-\gamma}$, we get $\beta'=0$. Thus we have $CA_{21}(\alpha, \gamma, \delta)$.

\item \textbf{Case $\alpha=\delta+\gamma$.} If $\gamma=-\delta$, then by choosing $a$ we can obtain $CA_{22}(\beta)$.
Otherwise, $\alpha\neq0$ and by setting $a=\frac{1}{\alpha}$ we can get $CA_{23}(\beta, \gamma, \delta)$.
\end{itemize}

\textbf{Case 4.}
\[
\begin{cases}
e_1\circ e_1=e_2,\\
e_1\ast e_1=\alpha e_1+\beta e_2, \ e_2\ast e_1=\gamma e_1, \ e_1\ast e_2=\gamma e_1, \ e_2\ast e_2=\gamma e_2.
\end{cases}
\]
By verifying all  multiplications of the algebra in the new basis we obtain relations between the
parameters $\{\alpha', \beta', \gamma', \delta'\}$ and $\{\alpha, \beta, \gamma, \delta\}$:
\[
\begin{split}
\alpha'=a\alpha-2b\gamma,\quad 
\beta'=\frac{b}{a}\alpha-\frac{b^2}{a^2}\gamma+\beta,\quad
\gamma'=a^2\gamma.
\end{split}
\]
\begin{itemize}
\item \textbf{Case $\gamma=0$.} If $\alpha=0$, then we have $CA_{24}(\beta)$.
Otherwise, by setting $a=\frac{1}{\alpha}, b=-\frac{a\beta}{\alpha}$ we can obtain $CA_{25}$.
\item \textbf{Case $\gamma\neq0$.} By setting $a=\frac{1}{\sqrt{\gamma}}, \ b=\frac{a\alpha}{2\gamma}$ we can get $CA_{26}(\beta)$.
\end{itemize}

4. \textbf{Construction from $A_4$.}

By Lemma \ref{cocycles} we can write $CA_{27}(\alpha, \beta, \gamma,\delta)$. 
Now we consider the automorphism: $\theta(e_1)=e_2, \ \theta(e_2)=e_1$.

By verifying all  multiplications of the algebra in the new basis we obtain the relations between the
parameters $\{\alpha', \beta', \gamma', \delta'\}$ and $\{\alpha, \beta, \gamma, \delta\}$:
\[
\begin{split}
\alpha'=\delta,\quad 
\beta'=-\gamma,\quad 
\gamma'=-\beta,\quad 
\delta'=\alpha.
\end{split}
\]
This implies $CA_{27}(\alpha, \beta, \gamma,\delta)\cong CA_{27}(\delta, -\gamma, -\beta,\alpha)$.

5. \textbf{Construction from $A_5$.}
\
By Lemma \ref{cocycles} we have the following cases:

\textbf{Case 1.} We can write $CA_{28}(\alpha, \beta, \gamma)$.

Now we consider the automorphism: $\theta(e_1)=-e_1, \ \theta(e_2)=e_2$.

By verifying all  multiplications of the algebra in the new basis we obtain the relations between the
parameters $\{\alpha', \beta', \gamma'\}$ and $\{\alpha, \beta, \gamma\}$:
\[
\begin{split}
\alpha'=-\alpha,\quad 
\beta'=\beta,\quad
\gamma'=\gamma.
\end{split}
\]
This implies $CA_{28}(\alpha, \beta, \gamma)\cong CA_{28}(-\alpha, \beta, \gamma)$.

\textbf{Case 2.} We can write $CA_{29}(\alpha)$.
By verifying all  multiplications of the algebra in the new basis we obtain the relations between the
parameters $\{\alpha', \beta'\}$ and $\{\alpha, \beta\}$:
\[
\begin{split}
\alpha'=\alpha,\quad
\beta'=-\beta.
\end{split}
\]
This implies $CA_{29}(\alpha, \beta)\cong CA_{29}(\alpha, -\beta)$.

6. \textbf{Construction from $A_6(\lambda)$.} By Lemma \ref{cocycles} we have the following cases:

\textbf{Case $\lambda=0$.}

\textbf{Case 1.}
\[
\begin{cases}
e_2\circ e_1=-e_1, \\
e_1\ast e_1=-\alpha e_1+\beta e_2, \ e_2\ast e_1=\gamma e_1+\alpha e_2.
\end{cases}
\]
Now we consider the automorphism: $\theta(e_1)=ae_1, \ \theta(e_2)=e_2$, where $a\neq0$.

By verifying all  multiplications of the algebra in the new basis we obtain the relations between the
parameters $\{\alpha', \beta', \gamma'\}$ and $\{\alpha, \beta, \gamma\}$:
\[
\begin{split}
\alpha'=a\alpha,\quad 
\beta'=a^2\beta,\quad 
\gamma'=\gamma.
\end{split}
\]
\begin{itemize}
    \item \textbf{Case $\alpha\neq0$.} By setting $a=\frac1{\alpha}$, we get $CA_{30}(\beta, \gamma)$.

\item \textbf{Case $\alpha=0$.} If $\beta\neq0$, by setting $a=\frac{1}{\sqrt{\beta}}$ we can obtain $\beta'=1$. Hence,  we have $CA_{31}(\gamma)$.
\end{itemize}

\textbf{Case 2.} We can write $CA_{32}(\alpha,\beta)$.
By verifying all  multiplications of the algebra in the new basis we obtain the relations between the
parameters $\{\alpha', \beta'\}$ and $\{\alpha, \beta\}$:
\[
\begin{split}
\alpha'=\alpha,\quad
\beta'=\beta.
\end{split}
\]
This implies that the algebra is isomorphic to itself.

\textbf{Case 3.}
\[
\begin{cases}
e_2\circ e_1=-e_1, \\
e_2\ast e_1=\alpha e_1, \ e_2\ast e_2=\beta  e_1+\gamma e_2.
\end{cases}
\]
By verifying all  multiplications of the algebra in the new basis we obtain the relations between the
parameters $\{\alpha', \beta',\gamma'\}$ and $\{\alpha,\beta,\gamma\}$:
\[
\begin{split}
\alpha'=\alpha,\quad 
\beta'=\frac{\beta}{a},\quad 
\gamma'=\gamma.
\end{split}
\]
\begin{itemize}
\item  If $\beta\neq0$, by setting $a=\beta$, we get $CA_{33}(\alpha,\gamma)$.
\end{itemize}

\textbf{Case $\lambda=-1$.}
\[
\begin{cases}
e_2\circ e_1=-e_1, \ e_2\circ e_2=-e_2 \\
e_1\ast e_1=\alpha e_1, \ e_2\ast e_1=\beta e_1, \ e_1\ast e_2=\alpha e_2, \ e_2\ast e_2=\beta e_2.
\end{cases}
\]
By verifying all  multiplications of the algebra in the new basis we obtain the relations between the
parameters $\{\alpha',\beta'\}$ and $\{\alpha,\beta\}$:
\[
\begin{split}
\alpha'=a\alpha,\quad 
\beta'=\beta.
\end{split}
\]
\begin{itemize}
    \item If $\alpha\neq0$, by setting $a=\frac{1}{\alpha}$ we derive $\alpha'=1$. Hence, we have $CA_{34}(\beta)$.
\end{itemize}

\textbf{Case $\lambda\neq\{-1,0\}$.}
\[
\begin{cases}
e_2\circ e_1=-e_1, \ e_2\circ e_2=\lambda e_2 \\
e_2\ast e_1=\alpha e_1, \ e_1\ast e_2=\beta e_1+\gamma e_2.
\end{cases}
\]
By verifying all  multiplications of the algebra in the new basis we obtain the relations between the
parameters $\{\alpha',\beta', \gamma'\}$ and $\{\alpha,\beta,\gamma\}$:
\[
\begin{split}
\alpha'=\alpha,\quad 
\beta'=\beta,\quad 
\gamma'=a\gamma.
\end{split}
\]
\begin{itemize}

\item  If $\gamma\neq0$, by setting $a=\frac{1}{\gamma}$, we obtain $\gamma'=1$. Hence we have $CA_{35}(\alpha, \beta)$.
\end{itemize}

7. \textbf{Construction from $A_7$.} By Lemma \ref{cocycles} we have :
\[
\begin{cases}
e_2\circ e_1=-e_1, \ e_2\circ e_2=e_1-e_2 \\
e_2\ast e_1=\alpha e_1, \ e_2\ast e_2=\beta e_1+\gamma e_2.
\end{cases}
\]
Now we consider the automorphism: $\theta(e_1)=e_1, \ \theta(e_2)=ae_1+e_2$.

By verifying all  multiplications of the algebra in the new basis we obtain the relations between the
parameters $\{\alpha', \beta', \gamma'\}$ and $\{\alpha, \beta, \gamma\}$:
\[
\begin{split}
\alpha'=\alpha,\quad
\beta'=\beta+a(\alpha-\gamma),\quad
\gamma'=\gamma.
\end{split}
\]
\begin{itemize}
\item \textbf{Case $\alpha\neq\gamma$.}
By setting $a=\frac{\beta}{\gamma-\alpha}$ we obtain $CA_{36}(\alpha, \gamma)$.

\item \textbf{Case $\alpha=\gamma$.} Then, we get $CA_{37}(\alpha, \beta)$.
\end{itemize}

8. \textbf{Construction from $A_8(\lambda)$.} By Lemma \ref{cocycles} we have the following cases:

\textbf{Case $\lambda\neq\{-2,0\}$.}
\[
\begin{cases}
e_1\circ e_2=(\lambda+1)e_1, \ e_2\circ e_1=\lambda e_1, \ e_2\circ e_2=(\lambda-1)e_2 \\
e_2\ast e_1=(\alpha+\beta) e_1, \ e_1\ast e_2=2\alpha e_1, \ e_2\ast e_2=\gamma e_1+2\beta e_2.
\end{cases}
\]
Now we consider the automorphism: $\theta(e_1)=ae_1, \ \theta(e_2)=e_2$, where $a\neq0$.

By verifying all  multiplications of the algebra in the new basis we obtain the relations between the
parameters $\{\alpha', \beta', \gamma'\}$ and $\{\alpha, \beta, \gamma\}$:
\[
\begin{split}
\alpha'=\alpha,\quad 
\beta'=\beta,\quad 
\gamma'=\frac{\gamma}{a}.
\end{split}
\]
\begin{itemize}
    \item We can assume $\gamma'=1$ by setting $a=\gamma$ when $\gamma\neq0$. Hence, we obtain $CA_{38}(\alpha, \beta)$.
\end{itemize}

\textbf{Case $\lambda=-2$.}
\[
\begin{cases}
e_1\circ e_2=-e_1, \ e_2\circ e_1=-2e_1, \ e_2\circ e_2=-3e_2 \\
e_1\ast e_1=3\alpha e_1,\ e_2\ast e_1=2\beta e_1+\alpha e_2, \ e_1\ast e_2=\gamma e_1+2\alpha e_2, \ e_2\ast e_2=3\beta e_2.
\end{cases}
\]
By verifying all  multiplications of the algebra in the new basis we obtain the relations between the
parameters $\{\alpha', \beta', \gamma'\}$ and $\{\alpha, \beta, \gamma\}$:
\[
\begin{split}
\alpha'=a\alpha,\quad 
\beta'=\beta,\quad 
\gamma'=\gamma.
\end{split}
\]
\begin{itemize}
    \item If $\alpha\neq0$, by setting $a=\frac{1}{\alpha}$ we can assume $\alpha'=1$. Thus we get $CA_{39}(\beta,\gamma)$.
\end{itemize}

\textbf{Case $\lambda=0$.}

\textbf{Case 1.}
\[
\begin{cases}
e_1\circ e_2=e_1, \  e_2\circ e_2=-e_2 \\ e_2\ast e_1=(\alpha +\beta) e_1, \ e_1\ast e_2=2\alpha e_1, \ e_2\ast e_2=\gamma e_1+2\beta e_2.
\end{cases}
\]
By verifying all  multiplications of the algebra in the new basis we obtain the relations between the
parameters $\{\alpha', \beta', \gamma'\}$ and $\{\alpha, \beta, \gamma\}$:
\[
\begin{split}
\alpha'=\alpha,\quad 
\beta'=\beta,\quad
\gamma'=\frac{\gamma}a.
\end{split}
\]
\begin{itemize}
    \item If $\gamma\neq0$, by setting $a=\gamma$ we  can assume $\gamma'=1$. Hence, we obtain $CA_{40}(\alpha,\beta)$.
\end{itemize}

\textbf{Case 2.}
\[
\begin{cases}
e_1\circ e_2=e_1, \  e_2\circ e_2=-e_2 \\ e_1\ast e_2=\alpha e_1+\beta e_2, \ e_2\ast e_2=\gamma e_1+\delta e_2.
\end{cases}
\]
By verifying all  multiplications of the algebra in the new basis we obtain the relations between the
parameters $\{\alpha', \beta', \gamma',\delta'\}$ and $\{\alpha, \beta, \gamma,\delta\}$:
\[
\begin{split}
\alpha'=\alpha,\quad 
\beta'=a\beta,\quad 
\gamma'=\frac{\gamma}a,\quad 
\delta'=\delta.
\end{split}
\]
\begin{itemize}
    \item \textbf{Case $\gamma\neq0$.} By setting $a=\gamma$ we get $\gamma'=1$. Thus, we derive $CA_{41}(\alpha, \beta, \gamma)$.
\item \textbf{Case $\gamma=0$.} If $\beta\neq0$ by setting $a=\frac1{\beta}$ we get $\beta'=1$. Thus, we have $CA_{42}(\alpha, \beta)$.
\end{itemize}

\textbf{Case 3.}
\[
\begin{cases}
e_1\circ e_2=e_1, \  e_2\circ e_2=-e_2 \\ e_1\ast e_1=-\alpha e_1, \ e_2\ast e_1=\alpha e_2, \ e_1\ast e_2=-\beta e_1, \ e_2\ast e_2=\beta e_2.
\end{cases}
\]
By verifying all  multiplications of the algebra in the new basis we obtain the relations between the
parameters $\{\alpha', \beta'\}$ and $\{\alpha, \beta\}$:
\[
\begin{split}
\alpha'=a\alpha,\quad 
\beta'=\beta.
\end{split}
\]
\begin{itemize}
    \item When $\alpha\neq0$ we can assume $\alpha'=1$ by setting $a=\frac{1}{\alpha}$. Thus, we get  $CA_{43}(\beta)$.
\end{itemize}

9. \textbf{Construction from $A_9$.} By Lemma \ref{cocycles} we can write
\[
\begin{cases}
e_1\circ e_2=-e_1, \ e_2\circ e_1=-2 e_1, \ e_2\circ e_2=e_1-3e_2 \\
e_2\ast e_1=(\alpha+\beta) e_1, \ e_1\ast e_2=2\alpha e_1, \ e_2\ast e_2=\gamma e_1+2\beta e_2.
\end{cases}
\]
Now we consider the automorphism: $\theta(e_1)=e_1, \ \theta(e_2)=ae_1+e_2$.

By verifying all  multiplications of the algebra in the new basis we obtain the relations between the
parameters $\{\alpha', \beta', \gamma'\}$ and $\{\alpha, \beta, \gamma\}$:
\[
\begin{split}
\alpha'=\alpha,\quad 
\beta'=\beta,\quad 
\gamma'=\gamma+a(3\alpha-\beta).
\end{split}
\]
\begin{itemize}
    \item \textbf{Case  $3\alpha-\beta\neq0$.}
We may assume $\gamma'=0$ without loss of generality by setting $a=-\frac{\gamma}{3\alpha-\beta}$. Hence, we have $CA_{44}(\alpha, \beta)$.
\item \textbf{Case 2 $3\alpha-\beta=0$.} Then, we have $CA_{45}(\alpha, \gamma)$.
\end{itemize}

\end{proof}

\section*{Acknowledgments}
The author would like to express his gratitude to Yunhe Sheng for many fruitful
discussions and suggestions in the preparation of this article.

\end{document}